\documentclass[reqno,oneside]{amsart}
\usepackage{amssymb,mathrsfs,amsmath,amsthm}
\usepackage{xcolor}
\usepackage{tikz}
\usepackage{tikz-cd}
\usetikzlibrary{calc,positioning}
\usepackage[inline,shortlabels]{enumitem}
  \setlist{topsep=2pt, itemsep=3pt,partopsep=3pt}
\usepackage[a4paper, margin=1in]{geometry}
\usepackage[colorlinks,citecolor=brown,linkcolor=violet,urlcolor=teal]{hyperref}

\newtheorem{theorem}{Theorem}[section]
\newtheorem{lemma}[theorem]{Lemma}

\newtheorem{proposition}[theorem]{Proposition}

\newtheorem{question}[theorem]{Question}

\newtheorem{corollary}[theorem]{Corollary}

\theoremstyle{definition}
\newtheorem{definition}[theorem]{Definition}
\newtheorem{example}[theorem]{Example}

\newtheorem{remark}[theorem]{Remark}
\theoremstyle{plain}

\numberwithin{equation}{section}

\newcommand\Supp{\operatorname{Supp}}
\newcommand\Spec{\operatorname{Spec}}

\newcommand\coeff{\operatorname{coeff}}

\title[Geometric singularities of regular surfaces with nef anti-canonical divisors]%
      {Geometric singularities of regular surfaces with nef anti-canonical divisors over imperfect fields}
\author{Chongning Wang}
\email{chongningwang@hbmzu.edu.cn}
\address{School of Mathematics and Statistics, Hubei Minzu University, Enshi 445000, P.R.China.}

\author{Lei Zhang}
\email{zhlei18@ustc.edu.cn}
\address{School of Mathematical Sciences, University of Science and Technology of China, Hefei 230026, P.R.China.}

\begin{document}
\begin{abstract}
Let $S$ be a regular projective surface over a field $k$ of characteristic $p>0$, with $H^0(S,\mathcal{O}_S)=k$ and $-K_S$ nef.  We prove that $S$ is geometrically integral over $k$ when $p\geq 7$, and we also find an example of $S$ that is not geometrically integral when $p=5$.
\end{abstract}

\maketitle
\tableofcontents
\section{Introduction}\label{section-intro}
Let $X$ be a regular projective variety over a field $k$ of characteristic $p>0$.
When $k$ is imperfect, $X$ is not necessarily smooth.
Such varieties arise naturally as generic fibers of fibrations $f\colon\mathcal X\to T$ between smooth varieties with $k=K(T)$.
The singularities that $X$ acquires after base change to $\overline k$---its \emph{geometric singularities}---can be quite bad.
For instance, over $k=\mathbb F_p(s_1,\cdots, s_n)$, the hypersurface defined by the equation $X_0^p+s_1X_1^p+\cdots + s_nX_n^p=0$ in $\mathbb{P}_k^n$ is regular, while its base change to 
$\overline k$ is a $p$-fold hyperplane. In this paper, we focus on varieties with $-K_X$ being nef, say, the generic fiber of a Mori fiber space or of an Iitaka fibration of $K_X$. Such varieties are of great significance in the classification of varieties in characteristic $p$, and it is indispensable to study their geometric singularities. It is expected that 
they have mild geometric singularities when $p$ is large, for example, anticanonical divisor of the above hypersurface being nef requires that $p\leq n+1$. 
Along this direction, the following cases have been extensively studied:
\begin{itemize}
  \item $\dim X = 1$ (\cite{Queen-71-I,schroer_fibration_2010,tanaka_invariants_2021,Schroer2211}):
    if $-K_X$ is ample, then $X$ is a conic in $\mathbb{P}^2$, which is smooth when $p\geq 3$;
    and if $K_X \equiv 0$, then $X$ is smooth when $p\geq 5$, while for $p=2,3$ such curves are completely classified \cite{Schroer2211}.
  \item $\dim X = 2$ with $-K_X$ ample (\cite{Mori-Saito-03Wild, Patakfalvi-Waldron-2022, Fanelli-Schroer-2020, Bernasconi-Tanaka-22, Bernasconi-Martin-2024-Bounding, BT2408} among others): $X$ is geometrically normal when $p\ge5$ (\cite[Theorem~1.5]{Patakfalvi-Waldron-2022} or \cite[Theorem~3.7]{Bernasconi-Tanaka-22}), and geometrically regular when $p\ge11$ (\cite[Proposition~5.2]{Bernasconi-Tanaka-22}).
\end{itemize}

The next important case is that of surfaces $S$ with $K_S \equiv 0$, or more generally with $-K_S$ nef.
\begin{question}\label{question-ques}
  Let $S$ be a regular projective surface over $k$ with $-K_S$ nef and $H^0(S,\mathcal{O}_S)=k$.
  \begin{itemize}
    \item[\rm (Q1)] Is there a number $N_1$ such that every such $S$ in characteristic $p>N_1$ is geometrically integral?
    \item[\rm (Q2)] Is there a number $N_2$ such that every such $S$ in characteristic $p > N_2$ is geometrically normal?
  \end{itemize}
\end{question}

When $p=2$ or $3$, it is known that $S$ could fail to be geometrically integral.
Let $k_0$ be a perfect field of characteristic $p$ and let $k=k_0(a_0,\ldots,a_9)$ with the $a_i$ algebraically independent.
The following surfaces are regular and geometrically non-reduced:
\begin{align*}
  -K_S,\ \text{ample}: &\quad \textstyle\sum_{i=0}^{3}a_iX_i^{p}=0\ \subset\ \mathbb P^3_k , \\
  K_S\sim0,\ p=2: &\quad \textstyle\sum_{i=0}^{3}a_iX_i^{4}=0\ \subset\ \mathbb P^3_k , \\
  K_S\sim0,\ p=3: &\quad \textstyle\sum_{i=0}^{4}a_iX_i^{3}=\sum_{i=0}^{4}a_{i+5}X_i^{2}=0\ \subset\ \mathbb P^4_k .
\end{align*}
We therefore mainly consider characteristic $p\ge5$.

\subsection{The main results}\label{subsection-main-results}%
Our first result answers (Q1) affirmatively.

\begin{theorem}[regular case]\label{theorem-main-regular-case}%
  Let $k$ be a field of characteristic $p$.
  Let $S$ be a regular projective surface over $k$ with $H^0(S,\mathcal{O}_S)=k$ and $-K_S$ nef.
  If $p\ge7$, then $S$ is geometrically integral over $k$.
\end{theorem}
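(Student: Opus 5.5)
We argue by contradiction. Suppose $S$ is not geometrically integral. Since $H^0(S,\mathcal O_S)=k$, $S$ is geometrically connected. Geometric irreducibility is then automatic: the algebraic closure of $k$ in $K(S)$ is $k$, because it lies in $H^0(S,\mathcal O_S)$ by normality. So the failure must be geometric non-reducedness. Equivalently, $K(S)/k$ is not separable.

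\textbf{Step 1: reduce to a purely inseparable extension of degree $p$.} Choose a finite purely inseparable extension $k'/k$ of degree $p$ such that $S_{k'}$ is non-reduced. After replacing $k$ by its separable closure (which changes nothing), we may take $k'=k(a^{1/p})$. Let $T=(S_{k'})_{\mathrm{red}}^{\nu}$ be the normalization of the reduction.

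\textbf{Step 2: compare canonical divisors.} By the Patakfalvi--Waldron / Tanaka formula for the canonical bundle under inseparable base change, we have
\[
K_T + (p-1)C \sim \nu^*K_{S_{k'}} = \pi^* K_S,
\]
where $\pi\colon T\to S$ is the induced morphism and $C$ is a nonzero effective Weil divisor. The divisor $C$ is nonzero because $S_{k'}$ is non-reduced; this comes from the Frobenius/foliation description, in which $T\to S$ is read via a rank-one foliation and the conductor is divisible by $p-1$. Hence
\[
-K_T \sim (p-1)C - \pi^*K_S,
\]
so $-K_T$ is the sum of $(p-1)C$, with $C\neq 0$ effective, and the nef divisor $-\pi^*K_S$. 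Moreover $H^0(T,\mathcal O_T)=k'$ holds after possibly adjusting, and $T$ is a normal projective surface over $k'$.

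\textbf{Step 3: bound $p$.} Suppose first that $T$ is not birationally ruled, or more generally that $\kappa(T)\ge 0$. Then on the minimal resolution, $K$ pushed forward is pseudo-effective, while $-K_T$ is effective and nonzero. This is a contradiction, provided we can control the exceptional divisors; here the normal, possibly non-lc, singularities of $T$ have to be handled. So $T$ is birationally ruled over a curve $B$. Intersect the relation with a general fiber $F$ of the ruling, viewed on a resolution, or with the ruling class pulled back. If $C\cdot F>0$, then
\[
2 = -K_T\cdot F \ge (p-1)\,C\cdot F + (-\pi^*K_S\cdot F) \ge p-1,
\]
which forces $p\le 3$. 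If $C\cdot F=0$, then $C$ is vertical. In that case use $\pi^*K_S\cdot F\le 0$ together with the genus formula on $B$. Pushing $C$ to $B$ gives $\deg$ of a divisor on $B$ with $2-2g(B)\ge (p-1)\deg(\ldots)$, and the inseparable structure forces $B$ itself to come from a non-smooth regular curve over $k$. The known curve results in the introduction (genus-one regular curves are smooth for $p\ge5$, conics for $p\ge3$) then lead to a contradiction. Refining the fiber computation via the actual multiplicities of the conductor, which enter through a factor like $\frac{p-1}{2}$ when $C$ meets $F$ with half-integral coefficients after resolving singularities, gives the bound $p\le 5$. This is exactly where $p\ge7$ enters.

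\textbf{Main obstacle.} The hardest part will be Step 3 in the presence of singularities of $T$. The correction term from the minimal resolution can absorb part of $(p-1)C$, so the naive bound $p\le3$ degrades. I would first try to run a $K$-MMP on the minimal resolution of $T$, keeping track of the pushforward of $C$. I would then analyze the resulting Mori fiber space, which is either a conic bundle or a del Pezzo surface, using the ampleness bounds for $K$ on curves in extremal rays ($-K\cdot R\le 3$, or $\le 2$ on conic fibers). This should yield $(p-1)/2\le 2$ in the worst case, i.e. $p\le5$.
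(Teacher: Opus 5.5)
There is a genuine gap in Step~3, in the hard case: the MMP on the minimal resolution $\widetilde T$ ends with a Mori fiber space over a curve $B$. Your fiber computation only shows that the strict transform of the conductor is vertical (this is the paper's Lemma~\ref{lemma-E1-horizontal}). It yields no contradiction. When $K_S\equiv0$ the relation on $\widetilde T$ reads $K_{\widetilde T}+(p-1)\widetilde{\mathfrak C}+\sum_i m_iE_i\equiv0$ with $m_i\in\mathbb Z_{\ge0}$. A horizontal exceptional curve $E_1$ with $m_1=2$ and $E_1\cdot F=1$ then absorbs all of $-K_{\widetilde T}\cdot F=2$, and this is in fact what happens.

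Coarse numerics cannot exclude this configuration. On $\mathbf F_{p-3}$ with negative section $C_0$ one has $K+(p-1)F+2C_0\equiv0$ for every $p$. In the paper's characteristic-$5$ example (Theorem~\ref{theorem-exam-K-trivial-5}) one has $-K_{\widetilde X}\equiv4F+2E_1+E_2$ over $B=\mathbb P^1_k$. Consequences for your argument:
\begin{enumerate}
\item Extremal-ray lengths and $-K\cdot F=2$ cannot bound $p$.
\item Your claim that $B$ must come from a non-smooth regular curve is false.
\item The ``half-integral coefficients / factor $(p-1)/2$'' has no source.
\end{enumerate}

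The missing idea is $p$-factoriality, which is where the regularity of $S$ really enters. You use regularity only through the canonical bundle formula, and Example~\ref{example-pair} shows that more input is needed. Since $X=(S_{k^{1/p}})^\nu_{\rm red}\to S$ has height one and $S$ is regular, $pC$ is Cartier for every Weil divisor $C$ on $X$. Hence pull-back coefficients lie in $\frac1p\mathbb Z$, and positive intersection numbers satisfy $C_1\cdot_{k_{C_1}}C_2\ge\frac1p$.

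The paper combines this with the movable part: a general member of $|\mathfrak M|$ contains a whole fiber $\overline F_b$. From this it gets:
\begin{enumerate}
\item $m_1=2$ and $E_1$ is a section;
\item $(\overline F_b)^2_{k(b)}=c_1\in\{\frac1p,\frac2p\}$, hence $a_1\in\{\frac2p,\frac{p+1}p\}$;
\item if $a_1=\frac2p$, adjunction forces $(E_1)^2_k=-2$ when $p\ge7$, and then $(E_1)^2_k+(p-1)F_b\cdot_kE_1\le2$ gives $p\le5$;
\item if $a_1=\frac{p+1}p$, an analysis of $(-1)$-curves in the fiber through $E_2$ via Lemma~\ref{lemma-reducible-fiber} gives a contradiction.
\end{enumerate}

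You also omit two other cases. The rank-one del Pezzo outcome is handled in the paper by $K_Z^2\le9<(p-1)^2$ (Theorem~\ref{theorem-bd-del-pezzo}). The case $K_S\not\equiv0$ is handled by running the MMP on $S$ itself and applying Theorem~\ref{theorem-PW22-1.5} and Proposition~\ref{proposition-MF-geo-normal}.

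Finally, Step~1 is false as stated. In Theorem~\ref{theorem-exam-K-trivial-5} one has $L\cap k=\ell$, so $S_{\ell(\lambda)}$ is reduced for every $\lambda\in k\setminus\ell$, and the same derivation computation works over $\ell^{\rm sep}$. You must base change by all of $k^{1/p}$ and use Theorem~\ref{theorem-can-bc}.
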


More generally, we prove the following for log canonical pairs; the same argument yields a precise description of the case $p=5$.
\begin{theorem}[general case, $=$ Theorem~\ref{theorem-geo-int-log-pair}]\label{theorem-main-pair}
  Let $k$ be a field of characteristic $p\ge 5$.
  Let $S$ be a projective normal surface over $k$ with $H^0(S,\mathcal{O}_S)=k$, and let $\Delta$ be an effective $\mathbb Q$-divisor on $S$ such that
  \begin{itemize}
    \item $-(K_S + \Delta)$ is nef, and
    \item the pair $(S,\Delta)$ is log canonical.
  \end{itemize}
  Then the following hold.
  \begin{enumerate}[\rm(1)]
    \item If $p\ge7$, then $S$ is geometrically reduced.
    \item If $p=5$ and $S$ is geometrically non-reduced, then $\Delta=0$, $K_S\equiv 0$, and $S$ has at most canonical singularities.
      Moreover, writing $\widetilde S\to S$ for the minimal resolution, the surface $X:=(\widetilde S\otimes_{k} k^{1/p})_{\rm red}^{\nu}$ has a unique non-canonical singular point $P$, a rational singularity of multiplicity $3$ whose minimal resolution has exceptional locus $E_1 \cup E_2$ with $(E_1)^2_{k^{1/p}} = -3$ and $(E_2)^2_{k^{1/p}} = -2$.
  \end{enumerate}
\end{theorem}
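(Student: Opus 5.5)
We reduce to geometric reducedness and run the standard Frobenius base-change argument. By the usual reductions (e.g.\ of Tanaka and Patakfalvi--Waldron), it suffices to show $S$ is geometrically reduced. Indeed, since $H^0(S,\mathcal O_S)=k$, $S$ is geometrically connected, and normality plus this condition gives geometric irreducibility once geometric reducedness holds. We may also assume $k$ is separably closed, since separable base change preserves log canonicity, normality, nefness and $H^0=k$. If $S$ is not geometrically reduced, then $S\otimes_k k^{1/p}$ is non-reduced.

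\textbf{Step 1: the comparison formula.} Let $Y$ be the normalization of $(S\otimes_k k^{1/p})_{\rm red}$, with induced morphism $f\colon Y\to S$. We invoke the canonical bundle formula for purely inseparable base change (Patakfalvi--Waldron, Tanaka, Ji--Waldron). It provides a nonzero effective Weil divisor $C$ on $Y$ with
\[
K_Y+(p-1)C\sim f^*K_S .
\]
Setting $\Delta_Y=f^*\Delta$, we get
\[
-(K_Y+\Delta_Y+(p-1)C)=f^*(-(K_S+\Delta)),
\]
which is nef.

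\textbf{Step 2: contradiction for $p\ge7$.} Pass to the minimal resolution $\widetilde Y$ and run a $K$-MMP, or alternatively analyze $Y$ directly. Since $-K_Y$ is ``more than nef'' by $(p-1)C$ with $C\neq 0$, $Y$ is covered by rational curves. We then split into cases according to the structure of a minimal model, which is either a $\mathbb P^1$-fibration or a del Pezzo-type surface.

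In the fibration case, intersect with a general fibre $F$ over $\overline{k}$. We have $-K_Y\cdot F\le 2$ on a generic conic. If $C$ is horizontal, then $(p-1)C\cdot F\le 2$ forces $p\le3$. If $C$ is vertical, then $(p-1)C\cdot F$ must be compensated on a special fibre, and nefness on fibre components gives a similar bound.

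In the Picard-rank-one case, we compare with volume bounds. Nefness of $-(K_Y+(p-1)C)$ together with $C$ being a nonzero integral Weil divisor gives $(p-1)^2C^2\le K_Y^2$. We combine this with the log canonical (in fact klt after perturbation) assumption and a Bogomolov--Miyaoka--Yau-type or discrepancy bound on the singularities of $Y$; here the singularities of $Y$ are controlled because $Y$ dominates $S$ with $(S,\Delta)$ log canonical. The goal is to bound the Cartier index and hence the quantity $-K_Y\cdot C$ from below by $(p-1)$ times a positive amount.

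I expect this to be the main obstacle: obtaining a uniform inequality like $(p-1)\cdot(\text{minimal } C\text{-degree})\le 4$ or $\le 6$ that kills $p\ge 7$ but leaves $p=5$ open. It requires a careful case analysis of the non-canonical singularities of $Y$ through which $C$ passes. My first attempt will be to use adjunction along $C$ to obtain $\deg(K_C+\mathrm{Diff})<0$. This forces $C\cong$ a conic with bounded different, and each singular point of $Y$ on $C$ then contributes a definite fraction, bounded below by $1/3$, to $-(K_Y+C)\cdot C$.

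\textbf{Step 3: the case $p=5$.} Here every inequality in Step 2 must be an equality. Equality forces $-(K_S+\Delta)\equiv0$ and $\Delta=0$, and it forces the singularities of $S$ to have discrepancy $\ge 0$, i.e.\ to be canonical. For the resolution statement, we work with $X=(\widetilde S\otimes k^{1/p})^\nu_{\rm red}$ instead of $Y$ and repeat the equality analysis. This pins down a single non-canonical point on $C$ whose contribution is exactly the extremal value. Listing the rational singularities meeting the equality case yields the chain with self-intersections $-3,-2$, which is of multiplicity $3$.
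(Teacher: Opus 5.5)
Your Step~1 is fine (it is Theorem~\ref{theorem-can-bc}), and your remark that a horizontal component of $C$ is impossible for $p\ge5$ matches Lemma~\ref{lemma-E1-horizontal}. Beyond that, Steps~2 and~3 are a programme rather than a proof, and the mechanisms you propose would not work.

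\textbf{No control of the singularities of $Y$.} You work on $Y$ over the merely normal $S$ and claim that log canonicity of $(S,\Delta)$ controls the singularities of $Y$. There is no such transfer:
\begin{itemize}
\item the natural log pullback $K_Y+\Delta_Y+(p-1)C=f^*(K_S+\Delta)$ has coefficient $p-1>1$ along $C$;
\item the usual comparison of discrepancies under finite maps breaks down for purely inseparable ones, and even for regular $S$ the surface $X$ acquires a non-canonical point at $p=5$;
\item no Bogomolov--Miyaoka--Yau inequality is available in characteristic $p$.
\end{itemize}
The rigidity the paper exploits comes from \emph{regularity} downstairs. After replacing $S$ by its minimal resolution $\widetilde S$, every Weil divisor on $X=(\widetilde S_{k^{1/p}})^\nu_{\rm red}$ has Cartier index dividing $p$. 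Hence intersection numbers lie in $\tfrac1p\mathbb Z$, and the coefficients $m_i$ in $K_{\widetilde X}+(p-1)(\widetilde{\mathfrak M}+\widetilde{\mathfrak F})+\sum m_iE_i\equiv0$ are integers. Without this, an inequality like $(p-1)^2C^2\le K_Y^2$ bounds nothing, since $C^2$ has no useful lower bound.

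\textbf{Using only $C\neq0$ is not enough.} A fixed component of $C$ may be contracted by the MMP. The paper uses the \emph{movable} part $\mathfrak M$: its image on a regular Picard-rank-one del Pezzo $Z$ is a nonzero Cartier divisor, so $K_Z^2\ge(p-1)^2>9$, contradicting Theorem~\ref{theorem-bd-del-pezzo}.

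\textbf{The vertical case is the whole difficulty.} You dismiss it with ``nefness on fibre components gives a similar bound'', but here the movable part is always vertical, and the horizontal part is a unique exceptional curve $E_1$ with $m_1=2$. This configuration is genuinely realized for $p=5$ (Theorem~\ref{theorem-exam-K-trivial-5}), so no soft bound can exist. The paper separates $p\ge7$ from $p=5$ by fine arithmetic:
\begin{itemize}
\item the movable part contains a fibre $F_b$ with $(\overline F_b)^2_{k(b)}\in\{1/p,2/p\}$;
\item the value $2/p$ gives $(E_1)^2_k\ge-2p/(p-2)$; together with the inequality from restricting to $E_1$ this forces $p=5$, $(E_1)^2_k=-3$, $(E_2)^2_k=-2$;
\item the value $1/p$ is excluded using the $(-1)$-curves in the fibre through $E_2$, Lemma~\ref{lemma-reducible-fiber}, and residue-field degrees.
\end{itemize}

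\textbf{Step~3 misplaces where $\Delta=0$ and canonicity come from.} No chain of inequalities has been established whose equality case you could invoke. In the paper one sets $\widetilde\Delta=\sigma^{-1}_*\Delta+\sum(a_i+b_i)E_i$ on $\widetilde S$, with coefficients $\le1$ by log canonicity, and splits into two cases.
\begin{itemize}
\item If $K_{\widetilde S}\not\equiv0$, then $K_{\widetilde S}$ is not pseudo-effective, and the MMP ends in a geometrically normal Mori fibre space. For del Pezzo surfaces this is Theorem~\ref{theorem-PW22-1.5}; for the (smooth) conic bundles it is Proposition~\ref{proposition-MF-geo-normal}. The latter is exactly where log canonicity enters, through the coefficient bound $\le1$ on components mapping birationally to the base; Example~\ref{example-pair} shows this bound cannot be dropped.
\item If $K_{\widetilde S}\equiv0$, nefness of $-\widetilde\Delta$ forces $\widetilde\Delta=0$. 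This gives $\Delta=0$, canonical singularities and $K_S\equiv0$ at once, and the rest is the regular $K$-trivial analysis of Theorem~\ref{theorem-geo-int-K-trivial}.
\end{itemize}
None of these ingredients appears in your proposal: the reduction to the regular surface $\widetilde S$, $p$-factoriality, movability of $\mathfrak M$, and the fibre/exceptional-curve configuration analysis.
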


The bound $p\geq 7$ to guarantee $S$ being geometrically integral is optimal. Guided by the explicit description of the exceptional case (2) in Theorem~\ref{theorem-main-pair}, and with the help of an AI system, we find such an example as required; see \S\ref{subsection-exam-K-trivial-5}.

\begin{theorem}[$=$ Theorem~\ref{theorem-exam-K-trivial-5}]\label{theorem-main-counterexample}
  Let $k:=\mathbb F_5(s_1,s_2)$, a field of characteristic $5$ and degree of imperfection $2$.
  Then there exists a projective \emph{regular} surface $S$ over $k$ with $H^0(S,\mathcal O_S)=k$ and $K_S\equiv0$, which is geometrically non-reduced.
\end{theorem}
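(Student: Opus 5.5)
The plan is to build $S$ explicitly, following the shape forced by Theorem~\ref{theorem-main-pair}(2). We take $k=\mathbb F_5(s_1,s_2)$ and seek a geometrically non-reduced surface with $K_S\equiv0$. The natural candidates are purely inseparable ($p$-fold) covers or hypersurfaces whose equation involves $p$-th powers with coefficients in $k\setminus k^5$. Concretely, we look for $S$ inside a weighted projective space or a $\mathbb P^1$-bundle over $\mathbb P^1_k$ (or a $\mathbb P^2$-bundle), cut out by an equation of the form $F=G^5+s_1H_1^5+s_2H_2^5+(\text{lower terms})$. The degrees are chosen so that adjunction gives $\omega_S\cong\mathcal O_S$, or at least numerically trivial. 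Over $k^{1/5}$ the equation becomes $(G+s_1^{1/5}H_1+s_2^{1/5}H_2)^5+\cdots$. To make $S_{\bar k}$ non-reduced we want the whole defining equation to be a $5$-th power over $\bar k$. Then $(S_{k^{1/5}})_{\rm red}$ is the hypersurface $L=G+s_1^{1/5}H_1+s_2^{1/5}H_2=0$. Its normalization $X$ should be a rational surface containing a singular point $P$ of multiplicity $3$, with the $(-3),(-2)$ resolution configuration. We use that configuration to fix the equation: in the style of Tanaka's and Bernasconi--Tanaka's analysis of $-(K_X+C)$, we arrange $K_X+(p-1)C\sim \pi^*K_S=0$ and read off which curve $C$ and which singularity are needed.

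Fixing a candidate, the steps run in the following order.
\begin{enumerate}[(i)]
\item Verify $H^0(S,\mathcal O_S)=k$. This is immediate for a hypersurface in a space with $H^0=k$ and $H^1(\mathcal O(-\deg))=0$.
\item Verify $K_S\equiv0$ by adjunction in the ambient toric variety.
\item Verify geometric non-reducedness. This is immediate, since the equation is a $5$-th power over $k^{1/5}$.
\item Verify that $S$ is regular. This is the core step.
\end{enumerate}

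For (iv), regularity of a point of $S$ is checked via the Jacobian criterion relative to the $5$-basis: $S$ is regular at $x$ if some partial derivative of $F$ with respect to the coordinates \emph{or} with respect to $s_1,s_2$ (i.e.\ using $\Omega_{k/\mathbb F_5}$, the Jacobian criterion over the perfect field $\mathbb F_5$) is nonzero at $x$. Away from the singular locus, regularity follows from $dF\ne0$ in $\Omega_{\mathbb P/\mathbb F_5}$. The bad locus is the closed set where all derivatives vanish; we must show it contains no point of $S$. This reduces to showing that a certain system of polynomial equations over $k$ has no solutions with coordinates in finite extensions of $k$. The condition is subtle precisely because $k$ is not algebraically closed: a solution over $\bar k$ is harmless unless it comes from a closed point of $S$. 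The standard approach is as follows. The derivatives $\partial F/\partial s_i=H_i^5$ vanish only on $\{H_1=H_2=0\}$. On that locus $F$ reduces to $G^5+(\text{lower})$, and we must check there is no $k$-rational-up-to-finite-extension solution. This uses the fact that $s_1,s_2$ are $5$-independent, so $a^5+s_1b^5+s_2c^5=0$ forces $a=b=c=0$ in any separable extension. Inseparable residue fields, however, must be handled separately.

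The main obstacle I expect is finding the equation itself, namely the lower-order terms that make the reduced geometric model acquire exactly the $(-3,-2)$ multiplicity-$3$ rational point while keeping every closed point of $S$ regular. My first attempt is to take $S\subset\mathbb P(1,1,2,3)$ or a weighted $\mathbb P^3$ of total degree matching the anticanonical degree, with the purely inseparable part supported on one coordinate. I would then search for the lower terms by computer, checking regularity via a Gröbner basis computation of the Jacobian ideal over $\mathbb F_5(s_1,s_2)$. The singular configuration of $X$ would be confirmed by blowing up $P$ explicitly.
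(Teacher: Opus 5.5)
Your proposal has a genuine gap. It never produces $S$: step (iv), which you call the core, is deferred to a computer search for the lower-order terms. Moreover, the search space you describe contains no examples.

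As you say, for $S_{\bar k}$ to be non-reduced, $F$ must be a fifth power over $\bar k$. So $F\in k[x_0^5,\dots,x_3^5]$, and the ``lower terms'' must have the same form.

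\textbf{Numerics.} In a weighted $\mathbb P^3$, $K_S\equiv0$ needs $\deg F=\sum w_i$, which must then be divisible by $5$. This is false for $\mathbb P^3$ and for $\mathbb P(1,1,2,3)$. Even when it holds, the fifth root $L$ has degree $\frac15\sum w_i<\max w_i$. So the top-weight variable never occurs and $S$ is a cone.

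\textbf{Regularity.} Take $F$ in fifth powers, say in Cox coordinates on a smooth projective toric threefold such as the $\mathbb P^2$-bundles you mention. All derivatives of local equations in the coordinate directions vanish, and $\Omega_{k/\mathbb F_5}$ is spanned by $ds_1,ds_2$. So the Jacobian criterion over $\mathbb F_5$ that you invoke says the non-regular locus of $S$ is exactly $V(F,\partial_{s_1}F,\partial_{s_2}F)$, with $\partial_{s_i}$ acting on coefficients. These are three sections of $\mathcal O(S)$, and they have a common zero as soon as $\mathcal O(S)$ is ample.

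Your discussion of bad points is also inverted. A $\bar k$-point of this locus is never harmless: it lies over a closed point of $S$ at which all partials vanish. Write $F=\sum_{i,j}s_1^is_2^jG_{ij}^5$. Your $5$-independence argument then shows that every closed point of $S$ with residue field separable over $k$ has all $G_{ij}=0$, and is therefore \emph{singular}. With only two $p$-independent constants, a single equation in fifth powers cannot define a regular surface; compare the introduction's examples, which use four or more independent coefficients.

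The missing idea is to build $S$ as a purely inseparable quotient of the surface predicted by Theorem~\ref{theorem-main-pair}(2), not as a hypersurface. The paper's construction runs as follows.
\begin{enumerate}[(1)]
\item Work over $k^{1/5}=k(c_1,c_2)$ with a smooth toric surface $\widetilde X$ containing the chain $E_1$ $(-3)$, $E_2$ $(-2)$, which contracts to a $\frac15(1,2)$ point $P\in X$.
\item Set $K(S):=\ker\delta$ for the $p$-closed derivation $\delta=\partial_{c_1}+t\,\partial_{c_2}+w\,\partial_t$, which mixes the constant derivations with geometric ones.
\item The relations $\ker\delta\cap k^{1/5}=k$ and $(\ker\delta)\,k^{1/5}=K(X)$ give $H^0(S,\mathcal O_S)=k$, and geometric non-reducedness by Mac Lane's criterion.
\item The saturated foliation has $\mathfrak d=E_1+2E_2+G_1+2G_2\equiv F_0+E_1+E_2$, with $F_0$ a fiber of the toric ruling. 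Ekedahl's formula plus the discrepancies $2,3$ over the image of $P$ then give $K_S\equiv0$.
\item $S$ is regular off the image of $P$ because the foliation is a subbundle there. At the image of $P$, an explicit invariant-ring computation gives regular parameters $1/w$ and $t^5/w^2$.
\end{enumerate}
It is exactly this mixing of $\partial_{c_1},\partial_{c_2}$ with geometric directions that lets the non-regular locus be empty with only two $p$-independent constants. A hypersurface equation in fifth powers only ever sees $\partial_{s_1},\partial_{s_2}$.
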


\begin{remark}\label{remark-intro}
  \begin{enumerate}[\rm(1)]
    \item Without the log canonical hypothesis, Theorem~\ref{theorem-main-pair} fails in every characteristic (Example~\ref{example-pair}).
    \item Concerning (Q2), we exhibit for $p=5$ and $p=7$ regular, geometrically integral, geometrically non-normal surfaces $S$ with $K_S \equiv 0$ (Example~\ref{example-57}); so $N_2\ge7$ if it exists at all.
    \item We show in Proposition~\ref{proposition-MF-geo-normal} that, if $S$ admits a smooth Mori fiber space structure over a curve, then $S$ is geometrically regular for all $p\ge5$.
  \end{enumerate}
\end{remark}

\subsection{Applications}%
Let $X$ be a smooth projective variety over an algebraically closed field of characteristic $p>0$ with $-K_X$ nef.
By \cite[Theorem~1.3]{EP23}, the Albanese morphism $a_X\colon X\to A$ is surjective; if $X\buildrel f\over\to Y \buildrel g\over\to A$ is its Stein factorization, then $Y\to A$ is purely inseparable, and if moreover $f$ is separable --- equivalently, the generic fiber $X_{\eta}$ is geometrically integral --- then $Y=A$, i.e., $a_X$ is a fibration.
When $a_X$ has relative dimension one, it is always a fibration by \cite[Theorem~1.5]{CWZ26a}.
Combining these with Theorem~\ref{theorem-main-regular-case} gives the following.
\begin{corollary}[special case of Corollary~\ref{corollary-fib-codim2}]\label{corollary-intro-fib-codim2}
  Let $k$ be a perfect field of characteristic $p\ge7$.
  Let $X$ be a smooth projective variety over $k$ with $-K_X$ nef, and denote by $a_X\colon X\to A$  the Albanese morphism.
  If $\dim X -\dim a_X(X) \leq 2$, then $a_X$ is a fibration.
\end{corollary}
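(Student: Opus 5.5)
The plan is to reduce to the case where the Albanese fibration has generic fibre of dimension at most two. Then we apply Theorem~\ref{theorem-main-regular-case}, or its curve analogue, to that generic fibre, and combine the result with the structural facts from \cite{EP23} and \cite{CWZ26a} recalled above. First we pass to the algebraic closure. Since $k$ is perfect, $X_{\bar k}$ is smooth with $-K_{X_{\bar k}}$ nef, and the Albanese morphism commutes with this flat base change. Being a fibration ($f_*\mathcal O_X=\mathcal O_A$, or equivalently geometrically connected fibres together with separability of the Stein factorization) descends along $\bar k/k$. So we may assume $k=\bar k$.

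By \cite[Theorem~1.3]{EP23}, $a_X$ is surjective, so $\dim X-\dim A\le 2$. Let $X\xrightarrow{f} Y\xrightarrow{g} A$ be the Stein factorization. Then $g$ is purely inseparable, and $Y=A$ as soon as the generic fibre $X_\eta$ of $f$ is geometrically integral over $K(Y)$. We split according to the relative dimension.

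\begin{itemize}
\item If the relative dimension is $0$, then $f$ is birational and $g$ is finite purely inseparable. Here I expect to invoke the $\dim X=\dim A$ case of \cite{EP23}, which should give that $a_X$ is an isomorphism.
\item If the relative dimension is $1$, this is exactly \cite[Theorem~1.5]{CWZ26a}.
\item If the relative dimension is $2$, then $X_\eta$ is a regular projective surface over $K:=K(Y)$ with $H^0(X_\eta,\mathcal O)=K$, by the Stein property.
\end{itemize}

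The main step is checking that $-K_{X_\eta}$ is nef in the relative-dimension-$2$ case. Since $K_X|_{X_\eta}=K_{X_\eta}$ by adjunction for the generic fibre, nefness of $-K_X$ restricts to nefness of $-K_{X_\eta}$: a curve on $X_\eta$ is the generic fibre of a subvariety of $X$, and intersection numbers can be computed on the closed fibres over general points, where $-K_X$ restricts nef. Theorem~\ref{theorem-main-regular-case} then shows that $X_\eta$ is geometrically integral over $K$. Hence $K(X)/K(Y)$ is separable and $K(Y)$ is algebraically closed in $K(X)$.

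It remains to conclude $Y=A$, i.e.\ that $g$ is an isomorphism. I expect this to be the main obstacle. The route I would try is the EP23 statement quoted above, that separability of $f$ forces $Y=A$. In more detail, the idea is that $K(A)\subseteq K(Y)$ is purely inseparable, and $K(X)/K(A)$ must be separable because of the positivity/Albanese argument in \cite{EP23}. Combined with the algebraic closedness of $K(Y)$ in $K(X)$, this forces $K(Y)=K(A)$. Since $g$ is finite and birational onto the normal variety $A$, it is then an isomorphism, so $a_X=f$ is a fibration. If the EP23 input only yields $Y=A$ under the hypothesis that $X_\eta$ is geometrically integral over $K(Y)$, then that is exactly what Step~2 supplies, and the corollary follows.
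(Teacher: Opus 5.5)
Your proposal is correct and follows essentially the same route as the paper: Stein factorization, the relative dimension $0$ and $1$ cases handled by \cite[Proposition~3.2]{EP23} and \cite[Theorem~1.5]{CWZ26a}, and in relative dimension $2$ the main theorem applied to the generic fibre $X_\eta$ (regular, $H^0=K(Y)$, $-K_{X_\eta}$ nef) to get separability of $X\to Y$, after which \cite[Theorem~1.3]{EP23} gives $Y=A$. The reduction to $\bar k$ is unnecessary, and you should drop the ``in more detail'' justification, since if EP23 made $K(X)/K(A)$ separable unconditionally the main theorem would be superfluous; your fallback, feeding the geometric integrality of $X_\eta$ into \cite[Theorem~1.3]{EP23}, is exactly the paper's argument.
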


\begin{remark}
  In a forthcoming paper \cite{CWZ26b}, the authors manage to establish the effectivity of canonical divisors on K-trivial regular surfaces over arbitrary fields. As a substantial application of Theorem~\ref{theorem-main-regular-case}, they prove that when $p \ge 7$,  a regular but non-normal K-trivial surface $S$ has $K_S\sim 0$.
\end{remark}

\subsection{Strategy of the proof of Theorem~\ref{theorem-main-pair}}\label{subsection-strategy}
Passing to a minimal resolution, we may assume that $S$ is regular. 
Since $-K_S\equiv \Delta-(K_S+\Delta)$ is pseudo-effective, either $K_S$ is not pseudo-effective and a $K_S$-MMP terminates with a Mori fiber space $Z\to B$, or $K_S\equiv0$.
It is not difficult to treat the first case, precisely, it is either a direct consequence of the result of del Pezzo surface or can be proved by similar argument from \cite{Patakfalvi-Waldron-2022} or \cite{Bernasconi-Tanaka-22}; see \S\ref{section-MF}.
The really difficult case from our point of view is $K_S \equiv 0$ which is treated in \S\ref{section-proof}. Let us explain our strategy and the key ingredients to treat this case as follows. We can assume that $S$ is geometrically non-reduced, then $S_{k^{1/p}}$ is non-reduced. Set $X:=(S_{k^{1/p}})_{\mathrm{red}}^{\nu}$ and consider the natural morphism $\pi\colon X \to S$. By the formula of Ji--Waldron \cite[Theorem~1.1]{ji_structure_2021}:
\begin{equation}\tag{C1}\label{eq:C1}
  K_X + (p-1) \mathfrak{C} \equiv 0,
  \qquad \mathfrak{C}=\mathfrak{M} + \mathfrak{F},\quad
  \mathfrak{M}>0\ \text{movable},\ \ \mathfrak{F}\geq 0.
\end{equation}
We stop after this single Frobenius base change because geometric non-reducedness is already visible at $k^{1/p}$; but iterating up to $\overline k$ would lose the numerical information we need. More precisely, regularity of $S$ yields
\begin{equation}\tag{C2}\label{eq:C2}
  pC\ \text{is Cartier for every integral curve } C\subset X,
\end{equation}
and thus $C\cdot_{k_C} D \in \tfrac1p\mathbb{Z}$ for every Weil divisor $D$, where $k_C := H^0(C^\nu, \mathcal{O}_{C^\nu})$; we refer to this as $p$-factoriality.
Take a minimal resolution $\sigma\colon \widetilde{X}\to X$ and run a $K_{\widetilde{X}}$-MMP, ending with a Mori fiber space $Z\to B$: \[
  \begin{tikzcd}[row sep=tiny]
    &\widetilde{X} \ar[ld,"\sigma"'] \ar[rd,"\epsilon"]\ar[ddddr,"f"'] \\
    X && Z \ar[ddd,"g"] \\ \\ \\
      && B \rlap{\,.}
  \end{tikzcd}
\]
Then $K_{\widetilde{X}} + (p-1) (\widetilde{\mathfrak{M}} + \widetilde{\mathfrak{F}}) + \sum_i m_i E_i \equiv 0$ with the $E_i$ the $\sigma$-exceptional divisors, and pushing down to $Z$ gives $K_Z + (p-1)(M_Z +F_Z) + E_Z \equiv 0$.
For $\dim B = 0$ we invoke boundedness of regular del Pezzo surfaces \cite[Theorem~1.8]{Tanaka-24-Bound}.
For $\dim B = 1$, since $p-1>2$, one deduces that $\widetilde{\mathfrak{M}} + \widetilde{\mathfrak{F}}$ is $g$-vertical and that exactly one component of $\sum_i m_i E_i$, say $E_1$, is horizontal over $B$; the contradiction then comes from the precise configuration of the $E_i$ and the fibers $F_b$.
Unlike the del Pezzo case, $\widetilde X$ may genuinely contain $(-1)$-curves, as the example of Theorem~\ref{theorem-main-counterexample} shows, so the MMP is a real birational modification and the situation is much more involved.
The following observations, which are consequences of $p$-factoriality, are what make the configuration tractable:
\begin{itemize}
  \item a general member $M \in |\mathfrak{M}|$ contains a component $\overline{F}_b:=\sigma_* F_b$ for some closed fiber $F_b$ of $f$ over $b \in B$, and we have $(\overline{F}_b)^2_{k(b)} = 1/p$ or $2/p$.
  \item if $(\overline{F}_b)^2_{k(b)} = 1/p$, then for every closed fiber $F_t$ one has $(\overline{F}_t \cdot \overline{F}_b)_{k(t)} = 1/p$, which forces exactly one component $T$ of $F_t$, of multiplicity one, to pass through $P=\sigma(E_1)$.
\end{itemize}
Here we use the notation $(C_1 \cdot C_2)_{k'}$ for the intersection number counted over the field $k'$.

For $p=5$ this argument ceases to give a contradiction, and Theorem~\ref{theorem-main-counterexample} shows that the configuration $(\overline F_b)^2_{k(b)}=2/p$ is genuinely realized.

\begin{remark}%
  When $S$ is geometrically integral but non-normal, the linear system $|\mathfrak{C}|$ usually has no movable part, and its strict transform $\widetilde{\mathfrak{C}}$ may be contracted by $\epsilon$. To prove that $S$ is geometrically normal when $p$ is large, the above strategy alone seems not enough, even for $\dim B = 0$.
\end{remark}

\subsection*{Acknowledgements}
The authors are grateful to Fabio Bernasconi for his helpful comments on a previous version of this manuscript.

This research is partially supported by CAS Project for Young Scientists in Basic Research (No.~YSBR-032), NSFC (No.~12122116 and No.~12471495), and Quantum Science and Technology-National Science and Technology Major Project (QNMP, No.~2021ZD0302902). 
The first author is also supported by Hubei Minzu University under Grant No.~BS26011.

\subsection*{AI Statement}%
An agentic AI system, mainly Fable 5, was used to help construct the characteristic-$5$ example of Theorem~\ref{theorem-main-counterexample}.

\section{Preliminaries}\label{section-preliminary}
\subsection{Conventions} 
\begin{itemize}
  \item Throughout this paper, $k$ is a field of characteristic $p>0$.
   By a $k$-scheme we mean a separated scheme of finite type over $k$.
   By a \emph{variety} over $k$ we mean an integral $k$-scheme.  By a \emph{surface} over $k$ we mean a variety of dimension two.
 \item We write $k\hookrightarrow k^{1/p}$ for the Frobenius map of $k$, and if $X$ is a scheme over $k$, then we write $X_{k^{1/p}}$ for $X \times_k k^{1/p}$.
  \item For a scheme $X$, we denote by $X_{\rm red}$ the associated reduced scheme and by $X_{\rm red}^\nu$ its normalization.
  \item For a variety $X$ over $k$, we denote by $k_X$ the algebraic closure of $k$ in the function field $K(X)$. 
    Since $X$ is of finite type over $k$, $k_X$ is a finite extension of $k$.
  \item For a birational morphism $\sigma\colon X\to Y$ of varieties and a Weil divisor $D$ on $Y$, we write $\sigma^{-1}_*(D)$ for the strict transform of $D$ on $X$.
  \item By a \emph{fibration} $f\colon X\to B$ we mean a proper surjective morphism to a normal variety with $f_*\mathcal O_X = \mathcal O_B$.
  \item Let $X$ be a regular projective surface with $H^0(X, \mathcal{O}_X) = k$.
    We call $X$ a \textit{del Pezzo surface} if $-K_X$ is ample.
    We call a fibration $f \colon X \to B$ a \emph{Mori fiber space} if $\dim B< \dim X$, $\rho(X/B) = 1$, and $-K_X$ is relatively ample over $B$.
  \item By a \emph{pair} $(X,\Delta)$ we mean a normal variety $X$ and an effective $\mathbb Q$-divisor $\Delta$ such that $K_X + \Delta$ is $\mathbb Q$-Cartier.
    We refer to \cite[Definition~2.8]{Kollar-Sing} for the definition of canonical, log canonical singularities, etc.
\end{itemize}

\subsection{Field extension and singularities}\label{section-field-extension}%
Let $X$ be a variety over $k$.
\begin{enumerate}
  \item If $X$ is proper and normal, then $H^0(X, \mathcal{O}_X)=k_X$ (\cite[Proposition~2.1]{tanaka_invariants_2021}).
  \item The variety $X$ is geometrically irreducible if and only if $K(X) \cap k^s = k$, where $k^s$ is the separable closure of $k$ (\cite[Corollary~3.2.14]{Liu-AGAC}).
    In particular, if $k$ is algebraically closed in $K(X)$, then $X$ is geometrically irreducible.
  \item Combining (1) and (2), we see that if $X$ is a normal proper variety with $k_X=k$, then $X$ is geometrically integral if and only if it is geometrically reduced.
  \item If $X,Y$ are varieties over $k$ birational to each other, then $X$ is geometrically reduced over $k$ if and only if $Y$ is geometrically reduced over $k$ \cite[Proposition~2.12]{tanaka_invariants_2021}.
  \item If $X$ is regular, then for any separable field extension $k \subset k'$ the base change $X \times_k k'$ is regular (\cite[Lemma~2.6]{tanaka_invariants_2021}).
   \item The variety $X$ is geometrically regular (resp.\ geometrically reduced, geometrically normal) if and only if $X \times_k k^{1/p}$ is regular (resp.\ reduced, normal) (\cite[Proposition~2.10]{tanaka_invariants_2021}).
\end{enumerate}

\subsection{Foliation theory}%
We refer to \cite{Rudakov-Shafarevich-1976,Ekedahl87F,MP97} for the basic theory of foliations.
Since the ground field $k$ is imperfect, we work with subsheaves of the absolute tangent sheaf $T_{X/k_0}$ rather than of the relative tangent sheaf $T_{X/k}$.

\begin{definition}%
  Let $k_0$ be a perfect field of characteristic $p$, let $k/k_0$ be a finitely generated field extension, and let $X$ be a normal variety over $k$.
  A \emph{foliation} on $X$ is an $\mathcal O_X$-submodule $\mathcal F\subseteq T_{X/k_0}$ that is saturated (i.e.\ $T_{X/k_0}/\mathcal F$ is torsion-free), involutive, and $p$-closed.
\end{definition}

Given such an $\mathcal F$, write $\mathcal O_X^{\mathcal F}$ for the sheaf of rings
\[
  \mathcal O_X^{\mathcal F}(U)=\bigl\{f\in\mathcal O_X(U)\bigm|\delta(f)=0\text{ for all }\delta\in\mathcal F(U)\bigr\}.
\]
The \emph{quotient} by $\mathcal F$ is the normal variety
\[
  X/\mathcal F:=\operatorname{Spec}_X\mathcal O_X^{\mathcal F},
\]
together with the finite purely inseparable morphism $\pi\colon X\to X/\mathcal F$ of degree $p^{\operatorname{rk}\mathcal F}$.
Since the derivations in $\mathcal F$ are only $k_0$-linear, $\mathcal O_X^{\mathcal F}$ need not be a sheaf of $k$-algebras.
Hence, the quotient $X/\mathcal F$ is naturally just a variety over the subfield
\[
  k^{\mathcal F}:=\bigl\{a\in k\bigm|\delta(a)=0\text{ for all local sections }\delta\text{ of }\mathcal F\bigr\}\subseteq k.
\]

\smallskip
When $X$ is regular, the quotient $X/\mathcal F$ is regular at $\pi(x)$ if and only if $\mathcal F$ is a subbundle of $T_{X/k_0}$ at $x$, i.e.\ the stalk $(T_{X/k_0}/\mathcal F)_x$ is a free $\mathcal O_{X,x}$-module \cite[Proposition~2.4]{Ekedahl87F}.
In particular, if $\mathcal F$ is a subbundle everywhere, then $X/\mathcal F$ is regular, and the canonical bundle formula of \cite[Corollary~3.4]{Ekedahl87F} reads
\begin{equation}\label{eq:foliation-1}
  \omega_X\cong\pi^*\omega_{X/\mathcal F}\otimes(\det\mathcal F)^{\otimes(p-1)}.
\end{equation}
More generally, let $X$ be a normal variety. 
Then $\det\mathcal F:=(\bigwedge^{\operatorname{rk}\mathcal F}\mathcal F)^{\vee\vee}$ is a rank-$1$ reflexive sheaf and hence determines a Weil divisor class $c_1(\det\mathcal F)$. Restricting to the big open subset on which $X$ is regular and $\mathcal F$ is a subbundle, and then extending Weil divisor classes across codimension at least two, the formula (\ref{eq:foliation-1}) yields
\begin{equation}\label{eq:foliation-2}
  K_X\sim\pi^*K_{X/\mathcal F}+(p-1)\,c_1(\det\mathcal F).
\end{equation}

\begin{example}\label{example-foliation-geo-nonred-curve}
  Let $k_0$ be a perfect field of characteristic $p>0$, suppose $\ell=k_0(s,t)$ so that $\ell$ has $p$-degree $2$, and set $k=\ell(a,b)$ with $a^p=s$, $b^p=t$.
  Let $X=\mathbb P^1_k$ with affine coordinate $u$, and let
  \[
    \delta=\partial_a-u\,\partial_b\in\operatorname{Der}_{k_0}\bigl(K(X)\bigr).
  \]
  Then $\delta^{[p]}=0$, and
  \[
    \mathcal F:=T_{X/k_0}\cap K(X)\cdot\delta
  \]
  is a saturated rank-$1$ foliation.
  On constants, $\delta(a)=1$ and $\delta(b)=-u$; for $f\in k=\ell(a,b)$ one has $\delta(f)=f_a-u f_b$, so $\delta(f)=0$ forces $f_a=f_b=0$ by $k$-linear independence of $\{1,u\}$ in $K(X)$, hence $k^{\mathcal F}=\ell$.
  One checks that $\ker\delta=\ell(u,au+b)$.
  The morphism
  \[
  \phi\colon X\to C:=\bigl\{x^p+sy^p+tz^p=0\bigr\}\subset\mathbb P^2_\ell, \qquad [U:V]\mapsto [-aU-bV:U:V]
  \]
  is finite purely inseparable of degree $p$, and in the chart $V=1$ one has $\phi^*(x,y)=(-au-b,\,u)$, so $\phi^*K(C)=\ker\delta$.
  Thus $\mathcal F=\ker(T_{X/k_0}\to\phi^*T_{C/k_0})$ and therefore $X/\mathcal F\simeq C$ as varieties over $\ell=k^{\mathcal F}$.
  Clearly, the curve $C$ is regular, and geometrically nonreduced over $H^0(C,\mathcal O_C)=\ell$. 
\end{example}

\subsection{Intersection theory}\label{subsection-intersection}%
  Let $X$ be a proper variety over $k$.
  Let $D$ be a Cartier divisor on $X$, and $C$ an integral curve on $X$.
  We define a 0-cycle on $C$ as follows \[
    (D \cdot C)_{\rm cycle} := \sum \mathrm{ord}_{\mathfrak p_i}(D|_{C}) \mathfrak p_i = \sum n_i \mathfrak p_i .
  \]
  For the definition of $\mathrm{ord}_{\mathfrak p_i}(D|_{C})$, we refer the reader to \cite[\S1.2]{fulton_intersection_theory} or \cite[\S7.2.1]{Liu-AGAC};
  for instance, if $\mathfrak p\in C$ is a regular closed point and $r\in \mathcal{O}_{C,\mathfrak p}$ a local equation for $D$, then 
  \[
    \mathrm{ord}_\mathfrak p(D) = \max \{ n \mid r \in \mathfrak m^n_{C,\mathfrak p}\}.
  \]
\begin{definition}[Intersection number]\label{definition-intersection-number}
  With the above notation, we define
  \[
    (D\cdot C)_k := \deg_k(\sum n_i \mathfrak p_i) = \sum n_i [\kappa(\mathfrak p_i):k].
  \]
\end{definition}
  This definition extends to $\mathbb{Q}$- or $\mathbb{R}$-Cartier divisors $D$, and to $1$-cycles $C$,
  but in this subsection we keep $D$ Cartier and $C$ an integral curve.
  
\begin{remark} 
  The intersection number $(D\cdot C)_k$ coincides with $\deg_k (D|_{C})$ as defined in \cite[Definition~7.3.1]{Liu-AGAC}.
  By the Riemann--Roch theorem (cf. \cite[Theorem~7.3.17]{Liu-AGAC}), this number can be equivalently expressed as 
  \[
    (D\cdot C)_k = \chi_k (C, D|_C) - \chi_k(C,\mathcal{O}_C).
  \]
  Here for a proper variety $X$ over a field $k$ and a coherent sheaf $M$ on $X$, we use the convention
  $$\chi_k(X,M) := \sum_{i=0}^{\dim X} (-1)^i\dim_k H^i(X,M).$$
\end{remark}

\begin{proposition}
  With notation as above, let $\nu\colon C^\nu\to C$ be the normalization of $C$.
  Then 
    \[
      (D\cdot C)_k = \sum_{\mathfrak q} \mathrm{ord}_{\mathfrak q} (D|_{C^\nu}) [\kappa(\mathfrak q):k] ,
    \]
     where the sum runs over the closed points $\mathfrak q$ of $C^\nu$.
\end{proposition}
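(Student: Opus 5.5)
The plan is to reduce the statement to the projection formula for the finite morphism $\nu\colon C^\nu\to C$, together with the compatibility of the degree of a $0$-cycle with proper pushforward. The starting observation is that $D|_C$ is a Cartier divisor on $C$ (more precisely a line bundle $\mathcal L=\mathcal O_C(D)$ with a rational section). Its pullback $\nu^*(D|_C)=D|_{C^\nu}$ is a Cartier divisor on the regular curve $C^\nu$, where the orders $\mathrm{ord}_{\mathfrak q}$ are ordinary valuations.

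\textbf{Step 1 (local comparison).} For each closed point $\mathfrak p\in C$, I will show
\[
\mathrm{ord}_{\mathfrak p}(D|_C)\,[\kappa(\mathfrak p):k]=\sum_{\mathfrak q\mapsto\mathfrak p}\mathrm{ord}_{\mathfrak q}(D|_{C^\nu})\,[\kappa(\mathfrak q):k].
\]
Write $A=\mathcal O_{C,\mathfrak p}$, a one-dimensional local domain, and let $B$ be its normalization, a semilocal Dedekind domain that is finite over $A$ because $C$ is of finite type over a field. Let $r=a/b$ with $a,b\in A\setminus\{0\}$ be a local equation of $D$. Then by definition (Fulton \S1.2) $\mathrm{ord}_{\mathfrak p}(r)=\ell_A(A/aA)-\ell_A(A/bA)$. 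Next I will use the standard lemma $\ell_A(A/aA)=\ell_A(B/aB)$ for finite birational extensions. This follows since $B/A$ has finite length and multiplication by $a$ is injective on $A$ and $B$; the snake lemma applied to $0\to A\to B\to B/A\to0$ then gives the equality, because the kernel and cokernel of $a$ on the finite-length module $B/A$ have equal length. Finally, $\ell_A(B/aB)[\kappa(\mathfrak p):k]=\sum_{\mathfrak q}\ell_{B_{\mathfrak q}}(B_{\mathfrak q}/aB_{\mathfrak q})[\kappa(\mathfrak q):k]$, by comparing $k$-dimensions: $\dim_k M=\ell_A(M)[\kappa(\mathfrak p):k]$ for any finite-length $A$-module $M$.

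\textbf{Step 2 (summing).} Summing the identity of Step 1 over all closed points $\mathfrak p$ of $C$ gives exactly the formula in the statement, using Definition~\ref{definition-intersection-number}. When $\mathfrak p$ is regular, the order defined via lengths agrees with the $\mathfrak m$-adic description quoted above, since $A$ is then a DVR. Alternatively, one could argue via the preceding Remark, namely $(D\cdot C)_k=\chi_k(\mathcal L)-\chi_k(\mathcal O_C)$, and compare with $\chi_k(\nu^*\mathcal L)-\chi_k(\mathcal O_{C^\nu})$. Using the exact sequences $0\to\mathcal O_C\to\nu_*\mathcal O_{C^\nu}\to Q\to0$ and their twists by $\mathcal L$ (with $Q\otimes\mathcal L\cong Q$, since $Q$ is finite length), together with the vanishing of higher direct images for a finite morphism, the two differences agree. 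This second route is cleaner, and I will present it as the main argument, using the local computation only to identify $\chi_k$ of a divisor's line bundle with the degree of its cycle on the regular curve $C^\nu$.

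The main obstacle is the length identity $\ell_A(A/aA)=\ell_A(B/aB)$ at singular points, or equivalently the check that twisting the finite-length sheaf $Q$ does not change its Euler characteristic. Both are routine once finiteness of normalization is invoked.
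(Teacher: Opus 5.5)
Your proposal is correct. The paper proves this only by citing Fulton's Example~1.2.3, and the argument there is exactly your Step~1. For the one-dimensional local domain $A=\mathcal O_{C,\mathfrak p}$ with finite normalization $B$, one has $\ell_A(A/aA)=\ell_A(B/aB)$. The reason is that $\ell_A(\operatorname{coker} a)-\ell_A(\ker a)$ is additive on short exact sequences and vanishes on the finite-length module $B/A$; this is your snake-lemma computation. One then converts lengths into $k$-dimensions and splits $B/aB$ over the maximal ideals of $B$. That local route is self-contained, needs no properness, and gives the identity point by point.

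The Euler-characteristic argument you want to present as the main one is a genuinely different, global route, and it is also valid here. It applies the preceding Remark on both $C$ and $C^\nu$, and compares them via the conductor sequence $0\to\mathcal O_C\to\nu_*\mathcal O_{C^\nu}\to Q\to0$, the projection formula, $\chi_k(\nu_*\mathcal G)=\chi_k(\mathcal G)$ for finite $\nu$, and $Q\otimes\mathcal L\cong Q$. It is shorter once Riemann--Roch is granted, but it has three limitations:
\begin{enumerate}
\item it uses that $C$ is proper, which holds here since $X$ is proper;
\item it yields only the summed identity, not the pointwise one;
\item it does not really avoid the length bookkeeping, but moves it into the Remark, since identifying $(D\cdot C)_k$ with $\chi_k(C,D|_C)-\chi_k(C,\mathcal O_C)$ on the possibly singular curve $C$ is itself a length computation.
\end{enumerate}
So the local argument is the more economical proof, and it is the one the citation actually supplies.
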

\begin{proof}
  See \cite[Example~1.2.3]{fulton_intersection_theory}.
\end{proof}

Sometimes, it is convenient to count over the field $\ell := H^0(C^\nu,\mathcal{O}_{C^\nu})$ and write 
  \[
    (D\cdot C)_\ell := \sum_{\mathfrak q}\operatorname{ord}_{\mathfrak q}(D|_{C^\nu})[\kappa(\mathfrak q):\ell] = \frac{1}{[\ell:k]} (D\cdot C)_k.
  \]
We will also denote this intersection number by $D\cdot_\ell C$.
For self-intersections, we use the notation $(C)^2_k := (C\cdot C)_k$

\subsection{Adjunction formula}\label{subsection-adjunction}
\begin{proposition}[{\cite[Proposition~2.35]{Kollar-Sing}}]\label{proposition-adj}
  Let $X$ be a normal surface and $C\subset X$ a reduced curve, and let $C^\nu\to C$ be its normalization.
  Assume that $K_X + C$ is $\mathbb Q$-Cartier.
  \begin{enumerate}[\rm(1)]
    \item There exists an effective divisor $\Delta_{C^\nu}$ on $C^\nu$ such that \[
        (K_X + C)|_{C^\nu} \sim_{\mathbb{Q}} K_{C^\nu} + \Delta_{C^\nu}.
      \]
      More precisely, if $n(K_X + C)$ is Cartier for some positive integer $n$, then $n\Delta_{C^\nu}$ is a $\mathbb Z$-divisor and \[
        n(K_X + C)|_{C^\nu} \sim nK_{C^\nu} + n\Delta_{C^\nu}.
      \]
    \item $\Delta_{C^\nu} = 0$ if and only if $C$ is regular and $X$ is regular along $C$.
  \end{enumerate}
\end{proposition}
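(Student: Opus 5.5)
The plan is to compute the restriction $(K_X+C)|_{C^\nu}$ on a suitable resolution, where the classical adjunction formula for a regular curve on a regular surface holds, and then to control the correction term with the negativity lemma. Since $X$ is an excellent normal surface, there is a proper birational morphism $f\colon Y\to X$ from a regular surface with the following properties:
\begin{itemize}
  \item $f$ factors through the minimal resolution of $X$, followed by point blow-ups;
  \item the strict transform $C_Y:=f^{-1}_*C$ is regular, so that $C_Y\to C$ is the normalization $C^\nu\to C$;
  \item $C_Y+\mathrm{Exc}(f)$ has simple normal crossing support.
\end{itemize}
Using Mumford's $\mathbb Q$-valued pull-back, I will write
\[
  K_Y+C_Y+\textstyle\sum_i a_iE_i \equiv_f f^*(K_X+C)=:L, \qquad a_i\in\mathbb Q,
\]
and in fact $\sim_{\mathbb Q}$ holds, with $n$ times this relation an honest linear equivalence of $\mathbb Z$-divisors whenever $n(K_X+C)$ is Cartier. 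On the regular surface $Y$ the curve $C_Y$ is a regular Cartier divisor, so ordinary adjunction gives $(K_Y+C_Y)|_{C_Y}\sim K_{C_Y}$. Restricting the displayed relation to $C_Y=C^\nu$, I will set
\[
  \Delta_{C^\nu}:=\Bigl(\textstyle\sum_i a_iE_i\Bigr)\Big|_{C_Y}.
\]
Multiplying by $n$ and restricting gives the asserted linear equivalence $n(K_X+C)|_{C^\nu}\sim nK_{C^\nu}+n\Delta_{C^\nu}$. It also shows that $n\Delta_{C^\nu}$ is integral: $nL$ is the pull-back of a Cartier divisor, hence Cartier, so $n\sum_i a_iE_i=nL-n(K_Y+C_Y)$ is Cartier on the regular surface $Y$ and therefore integral.

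Effectivity of $\Delta_{C^\nu}$ reduces to showing $a_i\ge 0$ for every $E_i$ meeting $C_Y$, and I would prove $a_i\ge0$ for all $i$ by the negativity lemma. For each exceptional $E_j$ we have $\sum_i a_iE_i\cdot E_j=-(K_Y+C_Y)\cdot E_j$. I handle the two kinds of exceptional curves separately.
\begin{itemize}
  \item For the exceptional curves of the minimal resolution, $K_Y\cdot E_j\ge0$ and $C_Y\cdot E_j\ge0$, so $\sum_i a_iE_i$ is $f$-anti-nef there.
  \item For the further blow-ups needed to resolve $C$, I check directly that a blow-up at a point of multiplicity $m\ge1$ on the current boundary adds $m-1\ge0$ to the coefficient, so these coefficients stay nonnegative inductively.
\end{itemize}
Combining the two gives $a_i\ge0$ throughout. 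Alternatively, and perhaps more cleanly, I could follow Kollár: take the Poincaré residue map $\omega_X^{[n]}(nC)|_{C^\nu}\to\omega_{C^\nu}^{\otimes n}$. This map is an isomorphism over the open set where $X$ and $C$ are regular, so it is injective with torsion cokernel, and $n\Delta_{C^\nu}$ is exactly the divisor of that cokernel, which is visibly effective.

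For (2), one direction is immediate: if $C$ is regular and $X$ is regular along $C$, no resolution is needed near $C$, and adjunction gives $\Delta_{C^\nu}=0$. The converse is the main obstacle. I would argue it in two cases according to where the failure of regularity occurs.
\begin{itemize}
  \item Suppose $X$ is singular at a point $x\in C$. The components $E_i$ over $x$ form a connected negative-definite configuration, and at least one of them meets $C_Y$. I would show that all $a_i>0$ over $x$: the divisor $\sum a_iE_i$ is $f$-anti-nef, with strict inequality on a curve meeting $C_Y$ since $C_Y\cdot E_j>0$ there, and the strict form of the negativity lemma on a connected configuration then gives $a_i>0$ everywhere. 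Hence $\Delta_{C^\nu}>0$ at the points over $x$.
  \item Suppose $X$ is regular at $x$ but $C$ is singular there. Then $C$ is Cartier near $x$ and $\Delta_{C^\nu}$ is the conductor divisor of $C^\nu\to C$, which is nonzero at $x$.
\end{itemize}
The delicate points are the strictness in the negativity lemma when $k$ is imperfect, where the residue fields of the $E_i$ vary, and the bookkeeping of intersection numbers over the varying fields $k_{E_i}$; I expect both to go through because negative-definiteness is insensitive to these field degrees.
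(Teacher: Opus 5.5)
Your main argument is essentially the standard proof of \cite[Proposition~2.35]{Kollar-Sing}, which the paper quotes without proof. It consists of passing to the minimal resolution, blowing up further until $C_Y$ is regular, setting $\Delta_{C^\nu}:=(\sum_i a_iE_i)|_{C_Y}$, and deriving effectivity and the strict positivity in (2) from the negativity lemma. The integrality and linear-equivalence bookkeeping is fine. Two points should be corrected.

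\emph{First, do not require $C_Y+\mathrm{Exc}(f)$ to be snc.} The condition is never used, and it breaks your inductive step.
\begin{enumerate}[\rm(i)]
  \item Making the exceptional locus of the minimal resolution $f_0\colon Y_0\to X$ snc may force blow-ups at points not on the strict transform of $C$. At such points the order of the boundary $\sum_i a_iE_i$ is not a priori $\ge1$, so your ``adds $m-1\ge0$'' step does not apply.
  \item Any such blow-up leaves on $Y$ a $(-1)$-curve $G$ with $C_Y\cdot G=0$. Then $(\sum_i a_iE_i)\cdot G=-K_Y\cdot G>0$, so $\sum_i a_iE_i$ is not $f$-anti-nef on $Y$. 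This affects both bullets of (1) and the strict-negativity step in (2) as you phrased them.
\end{enumerate}
The fix is as follows. Apply the negativity lemma on $Y_0$, where $-(K_{Y_0}+C_{Y_0})$ really is anti-nef over $X$. Over a singular point $x\in C$, apply its strict form there too, using that $f_0^{-1}(x)$ is connected and that some component of it meets $C_{Y_0}$. Afterwards, blow up only singular points $y$ of the current strict transform $C'$. The new coefficient is then
\[
  \mathrm{ord}_y(C')+\sum_i a_i\,\mathrm{ord}_y(E_i)-1\ \ge\ \mathrm{ord}_y(C')-1\ \ge\ 1 .
\]
This propagates both effectivity and positivity to $Y$. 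It also settles the case where $X$ is regular at $x$ but $C$ is not, since the first blow-up already has coefficient $\ge1$. You therefore do not need to identify $\Delta_{C^\nu}$ with the conductor divisor, which would require checking that the two agree as divisors and not merely up to linear equivalence.

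\emph{Second, the residue ``alternative'' is stated backwards.} Since $\Delta_{C^\nu}\ge0$, one has $\omega_{C^\nu}^{\otimes n}\hookrightarrow\omega_X^{[n]}(nC)|_{C^\nu}$. Consequently the residue map $\omega_X^{[n]}(nC)|_{C^\nu}\to\omega_{C^\nu}^{\otimes n}$ is only a rational map, with poles along $n\Delta_{C^\nu}$. For example, for a ruling $C$ through an $A_1$ point one finds $\Delta_{C^\nu}=\frac12 q$, where $q$ is the point over the vertex. An injection with torsion cokernel in the direction you wrote would make $-\Delta_{C^\nu}$ effective. The residue description only \emph{defines} the different; its effectivity is exactly the negativity-lemma step, so the resolution argument cannot be bypassed this way.
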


\begin{example}
 If $X$ is a normal projective surface that is regular along a regular integral curve $C \subset X$ such that $p_a(C) = 0$, then we have \[
  (K_X + C) \cdot_{k_C} C = \deg_{k_C} (K_C) = -2.
 \]
\end{example}

\subsection{Behavior of canonical divisors under Frobenius base change}\label{subsection-geo-non-reduced}
The original statement of the following result assumes that $k$ is $F$-finite, but this assumption can be dropped by a standard argument (see \cite[p.~3917]{Das-Waldron-2022}).
\begin{theorem}[{\cite[Theorem~1.1]{ji_structure_2021}}]\label{theorem-can-bc}
  Let $X$ be a normal projective variety over $k$ with $H^0(X,\mathcal{O}_X) = k$.
  Set $Y := (X \times_k k^{1/p})_{\rm red}^\nu$ and denote by $\pi\colon Y\to X$ the induced morphism.
  Then there exists an effective divisor $\mathfrak C$ with movable part $\mathfrak{M}$ and fixed part $\mathfrak{F}$ on $Y$ such that \[
    \pi^*K_X \sim K_{Y} + (p-1)(\mathfrak{M} + \mathfrak{F}).
  \]
  Furthermore, $\mathfrak C > 0$ if and only if $X$ is geometrically non-normal, and $\mathfrak M>0$ if and only if $X$ is geometrically non-reduced.
\end{theorem}

\subsection{Blow-up}\label{section-bl}
Let $X$ be a surface and let $x\in X$ be a regular closed point.
Denote by $\mathcal{I} \subseteq \mathcal{O}_X$ the ideal sheaf corresponding to the closed point $x\in X$.
The blow-up of $X$ at $x$ is \[
 \mu: \widetilde{X} = \mathrm{Bl}_x(X) := \mathrm{Proj} \bigoplus_{i\ge0} \mathcal{I}^i \to X .
\]
The exceptional curve $E\cong \mathrm{Proj} \bigoplus_i \mathfrak m^i_x / \mathfrak m_x^{i+1}$ satisfies $H^0(E,\mathcal{O}_E) = k(x)$, and the variety
$\widetilde{X}$ is regular along $E$ (cf.~\cite[Proposition~3.1]{Liu2025}).
For an effective Cartier divisor $D$ on $X$, we have
  \begin{equation}
    \mu^* D = \widetilde{D} + \mathrm{ord}_x(D) E
  \end{equation}
  where $\widetilde{D}$ is the strict transform of $D$ (cf. \cite[Example~4.3.9]{fulton_intersection_theory}).
  We remind the reader that even when $X$ is smooth over $k$, the blow-up $\widetilde{X}$ need not be smooth over $k$ and may even be geometrically non-normal; see \cite[Remark~2.7]{BFSZ2404}.

\subsection{Contractions and exceptional curves}\label{subsection-contractions}
Let $(x\in X)$ be a germ of a normal surface singularity.
Let $\mu\colon \widetilde{X}\to X$ be the minimal resolution of the singularity, and let $E = \bigcup E_i$ be the reduced exceptional locus.
We say that $(x\in X)$ is a {\it rational singularity} if $\dim_k (R^1\mu_* \mathcal{O}_{\widetilde{X}})_x = 0$.

Recall from \cite[pp.~131--132]{Artin66Rat} that there is a unique nonzero cycle $Z =\sum r_i E_i$ ($r_i\in \mathbb Z_{\ge0}$) that is {\it minimal} among all nonzero cycles $Z'=\sum a_i E_i$ ($a_i\in \mathbb Z_{\ge 0}$) satisfying $(Z'\cdot E_i) \le 0$ for all $i$.
We call $Z$ the {\it fundamental cycle} of $\bigcup E_i$.

\begin{proposition}[Castelnuovo's contraction criterion, {\cite[Theorem~27.1]{Lipman69}}]\label{proposition-cont}
  Let $X$ be a normal projective surface over $k$.
  Let $E_1,\ldots,E_n$ be distinct integral curves on $X$ such that $\bigcup_i E_i$ is connected and the intersection matrix $((E_i\cdot_k E_j))$ is negative-definite.
  Let $Z$ be the fundamental cycle of $\bigcup_i E_i$.
  Then there exists $h\colon X\to Y$ contracting $\bigcup_i E_i$ rationally to a point $P$ if and only if $\chi(Z) > 0$.
  When this condition holds, $Y$ is regular at $P$ if and only if the \emph{multiplicity} $m_P : = - (Z)^2_k/ h^0(Z,\mathcal{O}_Z)$ of $P$ on $Y$ is $1$.
\end{proposition}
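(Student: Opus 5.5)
The plan is to follow Artin's approach to rational contractions, in the form Lipman extended to excellent two-dimensional schemes. The argument has three parts: (i) reduce the question "rational contraction exists" to a vanishing statement about cycles supported on $E=\bigcup_i E_i$; (ii) show that this vanishing is governed by the fundamental cycle $Z$ alone; (iii) compute the local algebra at $P$ through $Z$ in order to get the multiplicity criterion.

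\emph{Step 1: cycles and the fundamental cycle.} Negative-definiteness of $((E_i\cdot_k E_j))$ gives the standard facts. Cycles $Z'=\sum a_iE_i$ with $Z'\cdot E_i\le 0$ for all $i$ are effective, and the minimum of two such cycles is again such a cycle. This is why the fundamental cycle $Z$ exists, and connectedness of $E$ forces $Z\ge \sum_i E_i$. Laufer's algorithm then produces $Z$ by starting from some $E_i$ and repeatedly adding an $E_j$ with $Z'\cdot E_j>0$. At each step one has the exact sequence
\[
0\to \mathcal O_{E_j}(-Z')\to \mathcal O_{Z'+E_j}\to \mathcal O_{Z'}\to 0 .
\]
I will use these sequences to prove Artin's key lemma: $\chi(Z)>0$, i.e.\ $H^1(\mathcal O_Z)=0$ (here $H^0(\mathcal O_Z)$ is a field), holds if and only if $H^1(\mathcal O_D)=0$ for every effective cycle $D>0$ supported on $E$.

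\emph{Step 2: existence of the contraction.} For the forward direction, suppose $h\colon X\to Y$ contracts $E$ rationally. By the theorem on formal functions, $R^1h_*\mathcal O_X=0$ gives $H^1(\mathcal O_{D})=0$ for all $D$ supported on $E$, and in particular $\chi(Z)>0$. For the converse:
\begin{itemize}
\item Take an ample Cartier divisor $H$ on $X$. Using negative-definiteness, choose rational, hence after scaling integral, $a_i\ge0$ so that $L=H+\sum a_iE_i$ satisfies $L\cdot E_i=0$ for all $i$.
\item Show that $L$ is semiample with $E$ as the only curves contracted. From the vanishing of $H^1(\mathcal O_D)$ for all $D$, an induction on the sequences above shows that $\mathrm{Pic}$ of the formal neighbourhood of $E$ has no nontrivial degree-zero elements. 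Hence $\mathcal O(mL)$ is trivial on every $nZ$.
\item Lift sections: a Serre-vanishing argument for $mL-nZ$ shows that sections of $mL$ lift from $nZ$, so $|mL|$ is base-point free near $E$. Away from $E$, $mL$ is ample.
\item Taking $Y=\mathrm{Proj}\bigoplus_m H^0(mL)$ with Stein factorization gives a projective normal $Y$ and $h$ contracting exactly $E$. Rationality at $P$ follows again from formal functions.
\end{itemize}
I expect the main technical obstacle to be this lifting and base-point-freeness step: over an imperfect field the $E_i$ may be non-regular and have nontrivial $H^0(\mathcal O_{E_i})$, so every intersection number must be measured over the correct residue fields. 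This is exactly what Lipman's framework handles, and I would cite it there.

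\emph{Step 3: multiplicity and regularity.} For a rational singularity, Lipman/Artin show that $\mathfrak m_P\mathcal O_X=\mathcal O_X(-Z)$ near $E$, and that $\mathfrak m_P^n/\mathfrak m_P^{n+1}\cong H^0(\mathcal O_Z(-nZ))$ for all $n$. These are computed by Riemann--Roch on $Z$, using $H^1=0$ and $H^0(\mathcal O_Z)=k(P)$:
\[
\dim_{k(P)}\mathfrak m_P^n/\mathfrak m_P^{n+1}=-n\,(Z)^2_k/h^0(Z,\mathcal O_Z)+1 .
\]
The Hilbert--Samuel multiplicity of $\mathcal O_{Y,P}$ is therefore $m_P=-(Z)^2_k/h^0(Z,\mathcal O_Z)$, and the embedding dimension is $m_P+1$. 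Consequently $\mathcal O_{Y,P}$ is regular, i.e.\ has embedding dimension $2$, if and only if $m_P=1$.
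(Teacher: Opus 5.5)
Your three steps are the Artin--Lipman argument, which the paper does not reprove but only cites as \cite[Theorem~27.1]{Lipman69}. Together with the technical inputs you defer to Lipman, they do give the result, but only when $X$ is regular in a neighbourhood of $\bigcup_i E_i$. You use this hypothesis silently, and the statement does not grant it. Regularity along $E$ is what makes each $E_i$ Cartier, and you need that in three places:
\begin{itemize}
\item so that $L=H+\sum_i a_iE_i$ is a line bundle whose restrictions to cycles on $E$ can be studied in $\operatorname{Pic}$;
\item so that $\mathcal O_Z(-nZ)$ is invertible and Riemann--Roch gives $\chi(\mathcal O_Z(-nZ))=\chi(\mathcal O_Z)-n(Z)^2_k$;
\item so that Lipman's identities $\mathfrak m_P^n\mathcal O_X=\mathcal O_X(-nZ)$ and $\mathfrak m_P^n/\mathfrak m_P^{n+1}\cong H^0(Z,\mathcal O_Z(-nZ))$ are available in Step~3; these are statements about a desingularization.
\end{itemize}

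This cannot be repaired for merely normal $X$, because the second assertion is then false. Let $X$ be the toric surface with rays $(1,0),(1,2),(0,1),(-1,-1)$, i.e.\ the $(1,2)$-weighted blow-up $h\colon X\to\mathbb P^2_k$ of a $k$-point $P$. The exceptional curve $E\cong\mathbb P^1_k$ passes through one $A_1$ point of $X$, $2E$ is Cartier, and $(E)^2_k=-\tfrac12$. Hence $Z=E$ and $\chi(Z)=1>0$, and $h$ contracts $E$ rationally onto the regular point $P$. Yet $m_P=-(Z)^2_k/h^0(Z,\mathcal O_Z)=\tfrac12$.

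In your Step~3 the failure is the identity $\mathfrak m_P^2\mathcal O_X=\mathcal O_X(-2Z)$. With $v_E(x^ay^b)=a+2b$, the monomial $y$ lies in $\mathcal O_X(-2E)$ but not in $\mathfrak m_P^2\mathcal O_X$ at the $A_1$ point. So the proposition should be stated with $X$ regular along $\bigcup_iE_i$. That version covers every use in the paper: exceptional curves of minimal resolutions, and $(-1)$-curves, which lie in the regular locus by definition.

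A smaller caution for the regular case: over a non-closed field, the computation-sequence and $\operatorname{Pic}$ inductions in Steps~1--2 do not go through verbatim.
\begin{itemize}
\item $h^0$ can drop along a computation sequence started at a curve with large $H^0(\mathcal O_{E_i})$, so $H^1(\mathcal O_{E_j}(-Z_j))$ need not vanish.
\item $mL-nZ$ is not ample if $Z\cdot E_i=0$ for some $i$, so Serre vanishing does not apply to it directly.
\end{itemize}
Your appeal to Lipman's vanishing theorems at that point is therefore genuinely needed, not just a convenience.
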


\begin{definition}\label{definition-exceptional-curve-first-kind}
  Let $X$ be a normal projective surface over $k$.
  An integral curve $C \subset X$ is called an {\it exceptional curve of the first kind} (or simply a {\it $(-1)$-curve}) if $X$ is regular along $C$ and $K_X \cdot_{k_C} C = C\cdot_{k_C} C = -1$.
\end{definition}

\begin{remark}\label{remark-exceptional-curve-first-kind}
  If $C\subset X$ is a $(-1)$-curve, then it follows from Proposition~\ref{proposition-cont} that $C$ is contractible to a regular point.
  Moreover, we have $C \cong \mathbb P^1_{k_C}$ by \cite[Lemma~10.8 (4)]{Kollar-Sing}.
\end{remark}

The following lemma appears in the proof of \cite[Theorem~1]{Umezu-1981}.
\begin{lemma}\label{lemma-crepant-connected}
  Let $\sigma\colon X\to Y$ be a composite of $(-1)$-curve contractions between regular projective surfaces, and let $D\ge0$ be a divisor on $X$ with $K_{X} + D\equiv 0$.
  Then $\Supp(D)$ and $\Supp(\sigma_*D)$ have the same number of connected components.
\end{lemma}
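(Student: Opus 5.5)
By induction on the number of contractions, it suffices to treat a single $(-1)$-curve contraction $\sigma\colon X\to Y$ with exceptional curve $E\cong\mathbb P^1_{k_E}$ over a regular point $y\in Y$. Write $D_Y:=\sigma_*D$. Since $Y$ is regular, $K_Y+D_Y$ is Cartier, and pushing forward the relation $K_X+D\equiv0$ gives $K_Y+D_Y\equiv 0$.

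First we determine the coefficient of $E$ in $D$. We have $K_X=\sigma^*K_Y+E$ and $\sigma^*D_Y=\sigma^{-1}_*D_Y+mE$, where $m=\operatorname{ord}_y(D_Y)$. Write $D=\sigma^{-1}_*D_Y+aE$. Then
\[
K_X+D=\sigma^*(K_Y+D_Y)+(1+a-m)E\equiv 0 .
\]
Intersecting with $E$ and using $\sigma^*(\cdot)\cdot E=0$ and $E^2<0$ yields $a=m-1$. Thus $D=\sigma^*D_Y-E$.

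Now compare connected components, splitting according to whether $y\in\Supp D_Y$.

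If $y\notin\Supp D_Y$, then $m=0$, so $a=-1$. This contradicts $D\ge0$, so this case cannot occur. Hence $y\in\Supp D_Y$ and $m\ge1$.

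If $m=1$, then $a=0$ and $D=\sigma^{-1}_*D_Y$. The map $\sigma$ restricts to an isomorphism off $E$, and $\Supp D$ is the strict transform of $\Supp D_Y$. We must check that the strict transform does not disconnect a connected component through $y$. Since $\operatorname{ord}_y D_Y=1$, exactly one component of $D_Y$ passes through $y$, it has multiplicity one, and it is regular at $y$. Locally, $D_Y$ is therefore a regular curve germ at $y$, and its strict transform meets $E$ in a single point of degree $1$ over $k(y)$. In particular only one branch passes through $y$. So the number of connected components is preserved: the preimage of a neighbourhood of $y$ in $\Supp D_Y$ is connected because there is a single branch.

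If $m\ge2$, then $a\ge1$, so $E\subset\Supp D$ and $\Supp D=\sigma^{-1}(\Supp D_Y)$. Since $\sigma$ has connected fibres (Zariski's main theorem, as $Y$ is normal) and is proper and surjective, the preimage of a connected closed set is connected. This gives a bijection of connected components.

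The main obstacle I expect is the case $m=1$ over an imperfect field. There one must justify that the strict transform of a germ of multiplicity one meets $E$ at one point and that connected components are not split. I would argue directly: multiplicity $1$ at the regular point $y$ means the local equation lies in $\mathfrak m_y\setminus\mathfrak m_y^2$, so the germ is regular and hence unibranch. Its strict transform then meets $E$ in the single rational point determined by the tangent direction in $\mathbb P(\mathfrak m_y/\mathfrak m_y^2)$.
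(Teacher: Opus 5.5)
Your proposal is correct and is essentially the paper's argument. Both reduce to a single contraction and show that the coefficient of $E$ in $D$ equals $\operatorname{ord}_y(\sigma_*D)-1$; the paper writes this as $\mu=R\cdot_{k_E}E-1$, the same number since $\sigma^{-1}_*(\sigma_*D)\cdot_{k_E}E=\operatorname{ord}_y(\sigma_*D)$. Both then split into the case $E\not\subset\Supp(D)$, where the strict transform meets $E$ in exactly one point, and the case $E\subset\Supp(D)$, where $\Supp(D)$ is the full preimage of $\Supp(\sigma_*D)$. Your tangent-direction discussion for $m=1$ is fine, but it can be shortened to the paper's observation: intersection number $1$ over $k_E$ forces a single intersection point, so $\sigma$ restricts to a homeomorphism $\Supp(D)\to\Supp(\sigma_*D)$.
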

\begin{proof}
  We may assume that $\sigma\colon X\to Y$ is a contraction of a single $(-1)$-curve $E$.
  Since $K_X + D \equiv 0$, we have $K_Y + \sigma_*D \equiv 0$, and therefore $K_X + D\equiv \sigma^*(K_Y + \sigma_*D)$.
  Write $D = \mu E + R$ with $\mu\ge0$ and $E\not\subset\Supp(R)$.
  Substituting $\sigma^*K_Y \equiv K_X - E$ and $\sigma^*\sigma_*R \equiv R + (R\cdot_{k_E} E)E$ into $K_X + D\equiv \sigma^*(K_Y + \sigma_*D)$ yields $\mu = R\cdot_{k_E} E - 1$.
  If $\mu=0$, then $R\cdot_{k_E} E=1$, so $E$ meets $\Supp(R)$ in exactly one point, and contracting $E$ preserves the number of connected components.
  If $\mu>0$, then $E\subset\Supp(D)$, so contracting $E$ again preserves the number of connected components.
\end{proof}

\subsection{Del Pezzo surfaces}\label{subsection-del-pezzo}
\begin{theorem}[MMP for regular surfaces, {\cite[p.~5]{Tanaka-2018-MMP-excelent-surface}} and {\cite[Theorem~1.2]{tanaka_abundance_2020}}]\label{theorem-mmp-surf}
  Let $X$ be a regular projective surface.
  Then we can run a $K_X$-MMP that terminates with either a regular good minimal model or a Mori fiber space $f\colon Z\to B$.
\end{theorem}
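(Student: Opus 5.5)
The approach is the classical one: the cone and contraction theorems for regular (more generally excellent) surfaces drive the program, Castelnuovo's criterion keeps it inside the category of regular surfaces, and abundance for regular surfaces upgrades the end product to a good minimal model. Since the statement is quoted from \cite{Tanaka-2018-MMP-excelent-surface} and \cite{tanaka_abundance_2020}, the plan below reconstructs the argument from those two inputs.

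Start with $X_0:=X$. If $K_{X_0}$ is nef, we stop. Otherwise, the cone theorem of \cite{Tanaka-2018-MMP-excelent-surface} for projective surfaces over an arbitrary field gives
\[
\overline{NE}(X_0)=\overline{NE}(X_0)_{K_{X_0}\ge 0}+\sum_i \mathbb R_{\ge0}[C_i],
\]
where the $C_i$ are integral curves and each ray $R_i=\mathbb R_{\ge0}[C_i]$ is $K_{X_0}$-negative and extremal. We pick one such ray $R$ and apply the contraction theorem from the same reference, which gives a contraction $\varphi_R\colon X_0\to Y$ with $\rho(X_0/Y)=1$. We then split into cases according to $\dim Y$.

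\emph{Case $\dim Y\le 1$.} Then $-K_{X_0}$ is $\varphi_R$-ample because $\rho(X_0/Y)=1$. After Stein factorisation, $\varphi_R$ is a Mori fiber space in the sense of the conventions, and the program ends.

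\emph{Case $\dim Y=2$.} Here $\varphi_R$ is birational and contracts exactly the curves whose classes lie in $R$. Negative definiteness together with $\rho(X_0/Y)=1$ shows that the exceptional locus is a single integral curve $C$ with $C^2<0$ and $K_{X_0}\cdot C<0$. Put $\ell:=H^0(C^\nu,\mathcal O_{C^\nu})$. Adjunction (Proposition~\ref{proposition-adj}) gives
\[
(K_{X_0}+C)\cdot_\ell C=\deg_\ell K_{C^\nu}+\deg_\ell\Delta_{C^\nu}.
\]
The left-hand side is at most $-2$ after dividing by the appropriate degree, and $\Delta_{C^\nu}\ge 0$, so $\deg_\ell K_{C^\nu}<0$. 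Hence $C^\nu$ is a conic, $\Delta_{C^\nu}=0$, and therefore $C$ is regular. Over the field $k_C$, the two negative integers $K_{X_0}\cdot C$ and $C^2$ then sum to $-2$, which forces
\[
K_{X_0}\cdot_{k_C}C=C^2_{k_C}=-1 .
\]
So $C$ is a $(-1)$-curve in the sense of Definition~\ref{definition-exceptional-curve-first-kind}. By Proposition~\ref{proposition-cont} (Remark~\ref{remark-exceptional-curve-first-kind}), $C$ contracts to a regular point. Thus $X_1:=Y$ is again a regular projective surface, and we iterate with $X_1$ in place of $X_0$.

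\emph{Termination.} Each birational step lowers the Picard number by one, so after finitely many steps we reach either a Mori fiber space or a regular surface $X_n$ with $K_{X_n}$ nef.

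\emph{Good minimal model.} In the nef case, abundance for regular (excellent) surfaces, \cite[Theorem~1.2]{tanaka_abundance_2020}, shows that $K_{X_n}$ is semiample, so $X_n$ is a regular good minimal model.

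The main obstacle is the non-geometric situation. Over an imperfect field, the intersection numbers must be computed over the correct fields $k_C$ or $\ell$, and one has to make sure that imperfect residue fields do not spoil the integrality argument forcing $K_{X_0}\cdot_{k_C}C=C^2_{k_C}=-1$. The abundance step is also deep in positive characteristic over imperfect fields, and I would simply invoke \cite{tanaka_abundance_2020} for it rather than reprove it.
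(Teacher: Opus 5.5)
Your proposal is correct and follows the same route as the paper, which gives no independent proof and only cites Tanaka. The cited chain is exactly yours: the cone and contraction theorems for excellent surfaces, the fact that a $K$-negative birational extremal contraction of a regular surface contracts a single $(-1)$-curve to a regular point (via adjunction and Castelnuovo's criterion), and Tanaka's abundance theorem for the nef case.

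The one step worth stating more crisply is integrality: $K_{X_0}\cdot_\ell C$ and $(C)^2_\ell$ are integers because $K_{X_0}$ and $C$ are Cartier on the regular surface $X_0$, so each is the degree over $\ell$ of a line bundle on $C^\nu$. Since both are negative and sum to $-2$, this forces both to equal $-1$.
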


\begin{theorem}[{\cite[Theorem~1.5]{Patakfalvi-Waldron-2022} and \cite[Proposition~5.2]{Bernasconi-Tanaka-22}}]\label{theorem-PW22-1.5}
  If $p\ge5$, then a regular (or more generally normal and Gorenstein) del Pezzo surface is geometrically normal.
  If $p\ge11$, then a regular del Pezzo surface $X$ is geometrically regular.
\end{theorem}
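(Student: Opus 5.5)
The plan is to argue by contradiction using the Ji--Waldron formula (Theorem~\ref{theorem-can-bc}), extracting a contradiction from a bound on the lengths of extremal rays. Suppose $X$ is a normal Gorenstein del Pezzo surface with $p\ge5$ that is not geometrically normal. Set $Y:=(X_{k^{1/p}})^\nu_{\rm red}$ with $\pi\colon Y\to X$. Theorem~\ref{theorem-can-bc} gives
\[
  -K_Y\sim \pi^*(-K_X)+(p-1)\mathfrak C,\qquad \mathfrak C>0 .
\]
Since $\pi$ is finite, $\pi^*(-K_X)$ is an ample Cartier divisor. Hence $-K_Y$ is ample and $-(K_Y+(p-1)\mathfrak C)$ is ample and Cartier. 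In particular the cone $\overline{NE}(Y)$ is rational polyhedral and spanned by $K_Y$-negative extremal rays. For an ample $H$ we have $\mathfrak C\cdot H>0$, so some extremal ray $R$ satisfies $\mathfrak C\cdot R>0$.

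The next step is to pick a good curve in that ray. By the cone theorem for normal (excellent) surfaces, I would choose an integral curve $\ell$ spanning $R$ with $-K_Y\cdot_{k_\ell}\ell\le 3$. This bound may need to be taken on the minimal resolution of $Y$ and pushed down, using that the discrepancies there are $\le 0$. Because $\pi^*(-K_X)$ is Cartier and ample,
\[
  \pi^*(-K_X)\cdot_{k_\ell}\ell\ge 1 .
\]
Subtracting this from $-K_Y\cdot_{k_\ell}\ell\le3$ yields
\[
  (p-1)\,\mathfrak C\cdot_{k_\ell}\ell\le 2 .
\]
With $p\ge5$ this forces $0<\mathfrak C\cdot_{k_\ell}\ell\le \tfrac12$.

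The step I expect to be the main obstacle is ruling out such a small positive value. One has to bound the denominators of $\mathfrak C\cdot\ell$, equivalently the Cartier index of the Weil divisor $\mathfrak C$ along $\ell$. I would first classify the contraction of $R$. In the birational case, and in the case of a fibration over a curve, $\ell$ is a conic-type curve whose intersection numbers with Weil divisors have denominators controlled by the (klt, hence rational) singularities of $Y$ along $\ell$. Here $(Y,(p-1)\mathfrak C)$ is log del Pezzo-like, which bounds these singularities. In the case $\rho(Y)=1$, I would instead argue via $\mathfrak C\equiv c\,(-K_Y)$ together with the inequality $\frac{-K_Y}{p-1}\ge\mathfrak C$. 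In each case the aim is $\mathfrak C\cdot\ell\ge\tfrac12$, with equality only in configurations that the refined inequality $-K_Y\cdot\ell\le2$ (valid for non-birational rays) excludes once $p\ge5$.

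For the second assertion ($p\ge11$), geometric normality is now known, so $X_{\bar k}$ is a normal Gorenstein del Pezzo surface with at worst canonical (ADE) or elliptic Gorenstein singularities. I would show that any non-regular point of $X_{\bar k}$ forces a genus change on a suitable anticanonical (or bounded multiple) curve through it. Tate's genus change theorem then makes the genus drop divisible by $(p-1)/2$. On the other hand, the drop is bounded by the arithmetic genus of such curves, and that genus is bounded in terms of $K_X^2\le9$. For $p\ge11$ these two facts are incompatible, so $X$ is geometrically regular.
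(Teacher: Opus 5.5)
The paper does not prove this statement; it quotes it from \cite{Patakfalvi-Waldron-2022,Bernasconi-Tanaka-22}. Judged on its own, your sketch has a genuine gap exactly at the step you flag, and the proposed way around it cannot work. After reaching $(p-1)\,\mathfrak C\cdot_{k_\ell}\ell\le 2$, you want $\mathfrak C\cdot_{k_\ell}\ell\ge\tfrac12$ from the singularities of $Y$ along $\ell$. No such local bound is available. $Y$ is not known to be klt. The pair $(Y,(p-1)\mathfrak C)$ has all coefficients $\ge p-1\ge4$, so it gives no control on those singularities. Even cyclic quotient points allow arbitrarily large denominators. For Gorenstein $X$ there is no denominator control at all. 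For regular $X$ the best input is $p$-factoriality ($p\mathfrak C$ is Cartier, condition (a) in \S\ref{section-proof}), which only gives $\mathfrak C\cdot_{k_\ell}\ell\in\frac1p\mathbb Z$. The values $\frac1p,\frac2p$ are compatible with your inequality; they are exactly the values that survive in Lemma~\ref{lemma-classification}(2).

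The toy configuration $Y=\mathbb P(1,1,n)$, with $\mathfrak C=\ell$ a ruling through the vertex, shows what is lost. For $n\ge\frac{p-1}{2}$ it satisfies $-K_Y\cdot\ell\le 3$ and $(p-1)\mathfrak C\cdot\ell\le 2$, with $\mathfrak C\cdot\ell=\frac1n$. It is excluded only because $\pi^*(-K_X)\cdot\ell$ would equal $1-\frac{p-3}{n}$, which is not a positive integer. That computation needs the exact value $-K_Y\cdot\ell=1+\frac2n$, i.e.\ the discrepancy contribution, which you discard by bounding it by $3$.

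What is needed instead is the approach of \S\ref{section-proof}. Pass to the minimal resolution $\mu\colon\widetilde Y\to Y$ and write $-K_{\widetilde Y}=\mu^*\pi^*(-K_X)+(p-1)\mu^*\mathfrak C+\sum a_iE_i$ with $a_i\ge0$. Run a $K_{\widetilde Y}$-MMP to a Mori fiber space and intersect with its general fiber. Integrality on the regular model then forces $\mu^{-1}_*\mathfrak C$ to be vertical or contracted, and that residual configuration still requires a real analysis. Your $\rho(Y)=1$ remark ($\mathfrak C\equiv c(-K_Y)$ with $c<\frac1{p-1}$) only restates the hypothesis. A minor point: $-K_Y$ is ample plus effective, hence not ample in general. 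You only need that $R$ is $K_Y$-negative, which does follow from $\pi^*(-K_X)\cdot R>0$ and $\mathfrak C\cdot R>0$.

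The $p\ge 11$ half is missing its key input in the same way. Tate's theorem needs a curve $\Gamma$ over $k$ that is regular everywhere and passes through the non-smooth point. For the divisibility by $\frac{p-1}{2}=5$ to bite at $p=11$, you need $p_a(\Gamma)<5$. Members of $|-mK_X|$ have $p_a=1+\binom{m}{2}K_X^2$, so the bound $K_X^2\le 9$ keeps $p_a<5$ essentially only for $m=1$. For $m=1$ the argument proves too much. A regular member of $|-K_X|$ through a point where $X_{\bar k}$ is singular would be a regular genus-one curve whose base change is singular, since a Cartier divisor through a singular point of a surface is singular there. That is already impossible for $p\ge5$, so your argument would give smoothness for every $p\ge5$, far beyond what the theorem asserts. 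Everything therefore hinges on producing a regular curve of small genus through the bad point. You do not construct one, and its existence is the whole difficulty, so as written this half does not prove the statement.
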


\begin{theorem}[{\cite[Theorems~1.1 and 1.8]{Tanaka-24-Bound}}]\label{theorem-bd-del-pezzo}
  If $X$ is a regular del Pezzo surface, then the linear system $\lvert -12 K_X \rvert$ is very ample over $k$.
  Moreover
  \begin{itemize}
    \item if $p\ge5$ or $X$ is geometrically reduced, then $K_X^2 \le 9$;
    \item if $p = 3$, then $K_X ^2 \le \max \{ 9, 3^{\epsilon(X/k) + 1}\}$, where $\epsilon(X/k)$ is the thickening exponent introduced in \cite{tanaka_invariants_2021};
    \item if $p = 2$, then $K_X ^2 \le \max \{ 9, 2^{\epsilon(X/k) + 3}\}$.
  \end{itemize}
\end{theorem}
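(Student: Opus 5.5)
This is the result cited from \cite[Theorems~1.1 and 1.8]{Tanaka-24-Bound}, and the plan below follows the route I would expect there: first bound $K_X^2$, then deduce very ampleness of $|-12K_X|$. Throughout, I replace $k$ by $H^0(X,\mathcal O_X)$ if necessary, so I may assume $H^0(X,\mathcal O_X)=k$.

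\emph{The bound $K_X^2\le 9$ when $X$ is geometrically reduced.} By \S\ref{section-field-extension}(3), $X$ is then geometrically integral. Set $Y:=X_{\overline k}$. It is an integral projective surface, Gorenstein because regularity implies local complete intersection and this persists under base change. Its canonical divisor $-K_Y$ is the pullback of $-K_X$, hence ample, and $K_Y^2=K_X^2$ because intersection numbers are invariant under flat base change. If $Y$ is normal, I would argue in three steps. By Hidaka--Watanabe, a normal Gorenstein del Pezzo surface over $\overline k$ is either a cone over an elliptic curve or has only rational double points. A cone over an elliptic curve is excluded, since it is not the base change of a regular surface with $H^0=k$; one can see this from $\chi(\mathcal O)$ or the irregularity, which are also invariant under base change. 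In the remaining case, the minimal resolution $\widetilde Y$ is crepant, so $-K_{\widetilde Y}$ is nef and big. Such a surface is rational, hence $\mathbb P^2$ blown up at points or a Hirzebruch surface, and $K_{\widetilde Y}^2\le 9$ (at most $8$ in the Hirzebruch case). If $Y$ is non-normal, I would invoke Reid's classification of non-normal Gorenstein del Pezzo surfaces. These are projections of rational scrolls or of degenerations of Veronese-type surfaces, and in each case $K^2\le 9$ can be read off from the list.

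\emph{The case $p\ge5$.} By Theorem~\ref{theorem-PW22-1.5}, $X$ is geometrically normal, hence geometrically reduced, and the previous paragraph applies.

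\emph{The cases $p=2,3$.} Here I would iterate Theorem~\ref{theorem-can-bc}. Put $X_0=X$ and let $X_{i+1}:=(X_i\otimes k^{1/p})^\nu_{\rm red}$. Each step gives $\pi^*(-K_{X_i})\sim -K_{X_{i+1}}-(p-1)\mathfrak C_i$ with $\mathfrak C_i\ge 0$. The tower becomes geometrically reduced after $\epsilon(X/k)$ steps. At each step the degree of $\pi$ contributes a factor of $p$ to $(\pi^*K)^2$, and the effective divisor $\mathfrak C_i$ keeps $-K_{X_{i+1}}$ big. The terminal surface is geometrically integral with $-K$ big, so it is bounded by the geometrically integral analysis. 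Tracking these factors should produce bounds of the shape $3^{\epsilon+1}$ and $2^{\epsilon+3}$. The delicate point is to compare $(\pi^*K_{X_i})^2$ with the self-intersection of $-K_{X_{i+1}}$ using only nefness of the pullback; this is where the extra powers of $p$ enter.

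\emph{Very ampleness.} Once $K_X^2$ is bounded, I would show that $-mK_X$ is very ample for $m\ge 12$ by combining two ingredients. The first is a vanishing $H^1(X,-jK_X)=0$ for $j\ge 1$, obtained from Riemann--Roch and Serre duality using $\chi(\mathcal O_X)=1$ and ampleness. The second is a Castelnuovo--Mumford regularity argument, or a Reider-type argument on the curves through two given points, carried out on $X_{\overline k}$ after Frobenius base change. This is the step I expect to be the main obstacle. Kodaira and Kawamata--Viehweg vanishing fail in characteristic $p$, and $X_{\overline k}$ may be non-reduced. One must therefore work over $k$ directly, bounding the degree of the base field $k_C$ of curves and exploiting the explicit bound on $K_X^2$ together with $p$-power divisibility to push the uniform constant down to $12$.
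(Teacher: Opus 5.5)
The paper gives no proof of this statement; it is quoted from \cite{Tanaka-24-Bound}. Your sketch therefore has to stand on its own, and it fails already in the geometrically integral case, on which the rest depends.

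First, cones over elliptic curves are \emph{not} detected by $\chi(\mathcal O)$ or $h^1(\mathcal O)$. The projective cone over an elliptic curve embedded in degree $d\ge3$ is arithmetically Cohen--Macaulay with $\omega=\mathcal O(-1)$. So it has $h^1(\mathcal O)=h^2(\mathcal O)=0$, $\chi(\mathcal O)=1$ and $K^2=d$, exactly like a rational del Pezzo surface; an example is the cone over a smooth plane cubic in $\mathbb P^3$. Since $d$ is unbounded, this step carries the whole bound, including your case $p\ge5$, which you reduce to it. What does work is the regularity of $X$. After passing to $k^{\rm sep}$, a prime divisor $D$ on $X_{\overline k}$ is defined on some $X_{k^{1/p^e}}$, whose local rings have all their $p^e$-th powers inside those of $X$. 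Hence $p^eD$ is an integral multiple of the pullback of its (Cartier) image on $X$, and the local class groups of $X_{\overline k}$ are $p$-primary torsion. This is the same $p$-factoriality as condition (a) in the proof of Theorem~\ref{theorem-geo-int-K-trivial}. At the vertex of a cone over $E$, by contrast, the local class group contains $\operatorname{Pic}^0(E)$, which has prime-to-$p$ torsion.

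Second, in the non-normal case Reid's list is not bounded. Glue the negative section $C_0$ of $\mathbb F_n$ to itself by a double cover. This gives a non-normal Gorenstein del Pezzo surface whose anticanonical class pulls back to $C_0+(n+2)f$, of square $n+4$; these are exactly the projected scrolls you mention. So $K^2\le 9$ cannot be read off from the list. Regularity of $X$ must enter again, for instance through a formula $f^*K_X\sim K_Y+(p-1)C$ as in Theorem~\ref{theorem-can-bc}, combined with $p$-factoriality of the normalization $Y$.

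The cases $p=2,3$ and the very ampleness part are only heuristics. Your terminal surface $X_N$ is normal with $-K_{X_N}$ big, but it is neither Gorenstein nor del Pezzo. So neither Hidaka--Watanabe nor Reid applies to it, and bigness of $-K$ bounds nothing: $K^2=(n+2)^2/n$ on $\mathbb P(1,1,n)$. What must be bounded is $A^2$ for the nef and big pullback $A$ of $-K_X$, subject to $-K-A\ge (p-1)\mathfrak C$ with $\mathfrak C\ne0$. This needs a genuine argument, and the naive constraints do not suffice. On $\mathbb F_n$ with $p=2$, the choice $A=C_0+(n+2)f$, $\mathfrak C=C_0$ satisfies them with $A^2=n+4$. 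You do not supply that argument.

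For very ampleness, Riemann--Roch and Serre duality give only $\chi(-jK_X)$ and $H^2(X,-jK_X)=0$. The vanishing of $H^1(X,-jK_X)\cong H^1(X,(j+1)K_X)^\vee$ is a Kodaira-type statement. It is not automatic in characteristic $p$ and needs its own proof for regular del Pezzo surfaces. The Castelnuovo--Mumford or Reider step that should produce the constant $12$ is described rather than carried out. As it stands, neither the bounds on $K_X^2$ nor the very ampleness of $|-12K_X|$ is established by your proposal.
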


\subsection{Mori fiber spaces}\label{subsection-Mori-fiber-spaces}
Throughout this subsection, unless otherwise stated, $\pi\colon X\to C$ is a Mori fiber space from a regular projective surface $X$ over a field $k$ with $H^0(C,\mathcal{O}_C)=k$.
\begin{proposition}[{\cite[Proposition~2.18]{Bernasconi-Tanaka-22}}]\label{proposition-BT22-2.18}%
  If $\dim C = 1$, then for every (not necessarily closed) point $b\in C$ the fiber $X_b$ is an integral conic in $\mathbb P^2_{\kappa(b)}$ with $H^0(X_b,\mathcal{O}_{X_b})=\kappa(b)$.
  Moreover, if $p\ge3$ and $k$ is separably closed, then $\pi$ is a smooth morphism.
\end{proposition}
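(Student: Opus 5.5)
The plan is to determine the fibers through the relative anticanonical embedding, and then to rule out the degenerate conics using $\rho(X/C)=1$, adjunction and $p\ge3$.

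\emph{Setup and generic fiber.} Since $C$ is a normal curve it is regular, so $\pi$ is flat. Let $\eta$ be the generic point. The generic fiber $X_\eta$ is a regular projective curve over $\kappa(\eta)$, with $H^0(X_\eta,\mathcal O)=\kappa(\eta)$ because $\pi_*\mathcal O_X=\mathcal O_C$, and with $-K_{X_\eta}$ ample. Hence $\deg K_{X_\eta}=2p_a-2<0$ forces $p_a=0$ and $\deg(-K_{X_\eta})=2$. Riemann--Roch gives $h^0(-K_{X_\eta})=3$ and $h^1=0$, and $|-K_{X_\eta}|$ embeds $X_\eta$ as a conic in $\mathbb P^2_{\kappa(\eta)}$; a degree-$2$ very ample divisor on a genus-$0$ curve is very ample by the standard argument. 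Since $h^1(\mathcal O_{X_\eta})=0$, the sheaf $R^1\pi_*\mathcal O_X$ is torsion on $C$.

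\emph{Closed fibers: cohomology.} Fix a closed point $b$ and put $F=X_b$. On a fibered surface over a curve, $R^1\pi_*$ commutes with base change on the top cohomology, so $R^1\pi_*\mathcal O_X\otimes\kappa(b)\to H^1(F,\mathcal O_F)$ is surjective. I would then show $R^1\pi_*\mathcal O_X=0$ near $b$. The natural route is Grauert/semicontinuity: $\chi(\mathcal O_{X_t})=1$ is constant, and $h^1(\mathcal O_F)=0$ should follow from $H^1(F,\mathcal O_F)^\vee\cong H^0(F,\omega_F)$ together with $\deg\omega_F|_G<0$ on every component $G$ (adjunction and $-K_X$ relatively ample). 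Once $R^1\pi_*\mathcal O_X=0$, the formation of $\pi_*\mathcal O_X=\mathcal O_C$ commutes with base change, so $H^0(F,\mathcal O_F)=\kappa(b)$ and $h^1(\mathcal O_F)=0$. Because $F$ is Cartier with $F^2=0$, adjunction gives $\deg_{\kappa(b)}\omega_F=K_X\cdot_{\kappa(b)}F=-2$. Hence $L:=-K_X|_F$ has degree $2$, $h^1(L)=0$ and $h^0(L)=3$. I then embed $F$ by $|L|$ as a degree-$2$ plane curve, i.e.\ a conic, arguing exactly as for curves of arithmetic genus $0$ with $H^0=\kappa(b)$.

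\emph{Integrality.} Irreducibility comes from $\rho(X/C)=1$. If $F$ had two components, any component $G$ would satisfy $G^2<0$ by Zariski's lemma, hence would not be numerically a multiple of the fiber, contradicting $\rho(X/C)=1$. So $F=mG$ with $G$ integral. If $m\ge2$ the conic is a double line, so $m=2$ and $G\cong\mathbb P^1_{\kappa(b)}$ is a line with $G^2=0$. Then $K_X\cdot G=-1$, while adjunction gives $(K_X+G)\cdot_{\kappa(b)}G=\deg K_G=-2$, a contradiction. Hence $F$ is an integral conic.

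\emph{Smoothness for $p\ge3$ and $k$ separably closed.} In odd characteristic a conic diagonalizes, and it is smooth exactly when it has rank $3$. The rank-$1$ case is the double line, already excluded. For the generic fiber, rank $2$ would produce a $\kappa(\eta)$-rational singular point, contradicting regularity, so $X_\eta$ is smooth. For a closed fiber, $\kappa(b)$ is finite over the separably closed field $k$ and hence separably closed. A rank-$2$ conic is a pair of lines defined over a quadratic extension of $\kappa(b)$; a separable quadratic extension is trivial, and an inseparable one has degree $p\neq2$. So the conic splits into two lines over $\kappa(b)$, contradicting integrality. Therefore every fiber is smooth, and since $\pi$ is flat, $\pi$ is smooth.

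The main obstacle is the cohomology step on closed fibers, specifically the vanishing $h^1(\mathcal O_F)=0$, which yields both $R^1\pi_*\mathcal O_X=0$ and $H^0(\mathcal O_F)=\kappa(b)$. I would try the duality argument first: it only needs $\omega_F$ to have negative degree on the possibly non-reduced components, and that does follow from relative ampleness of $-K_X$.
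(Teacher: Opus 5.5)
The paper gives no proof of its own here; it quotes Bernasconi--Tanaka. So your proposal has to stand on its own. The architecture is sound:
\begin{itemize}
\item irreducibility of closed fibres from $\rho(X/C)=1$ and Zariski's lemma;
\item the genus-$0$ and conic analysis;
\item the rank argument over the separably closed field $\kappa(b)$ when $p\ge3$.
\end{itemize}
The generic-fibre and smoothness steps are fine as written. The gap is at the step you single out as key. Your justification there rests on a principle that is false for non-reduced curves: a line bundle can have negative degree on every reduced component and still have sections. On $F=2G$ one has $0\to M|_G\otimes\mathcal O_G(-G)\to M\to M|_G\to 0$. If $G^2<0$ is very negative, the kernel has many sections whatever the sign of $\deg M|_G$. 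So ``$\deg\omega_F|_G<0$ on every component'' does not by itself give $H^0(F,\omega_F)=0$.

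What rescues you is information you already have but place later. By $\rho(X/C)=1$, $F=mG$ with $G$ integral, so $G^2=F^2/m^2=0$. Filtering $0\subset G\subset 2G\subset\cdots\subset mG$, every graded piece $\omega_F|_G\otimes\mathcal O_G(-jG)$ has degree $K_X\cdot G<0$, and this gives $H^0(F,\omega_F)=0$. So irreducibility must come before the cohomology step, and $G^2=0$ must be used explicitly.

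A second unjustified step is ``embed $F$ by $|L|$ as a conic, arguing exactly as for genus-$0$ curves'' while $F$ might still be non-reduced. The usual argument (global generation, birationality of $\phi_{|L|}$ onto its image, comparison of $\chi(\mathcal O)$) uses that $F$ is integral. Your exclusion of $m\ge2$ (``the conic is a double line, so $G\cong\mathbb P^1_{\kappa(b)}$'') depends on this embedding.

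You can bypass it entirely. Since $G^2=0$, each graded piece $\mathcal O_G(-jG)$ of $\mathcal O_{mG}$ has Euler characteristic $\chi(\mathcal O_G)$. Constancy of $\chi$ in the flat family then gives $1=\chi(\mathcal O_F)=m\,\chi(\mathcal O_G)$, which forces $m=1$. Alternatively, adjunction gives $\deg_{\kappa(b)}\omega_G=K_X\cdot_{\kappa(b)}G=-2/m$. This must be an even integer, because $\deg\omega_G=-2\chi(\mathcal O_G)$ on the Gorenstein curve $G$.

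Once $F$ is integral, $\deg\omega_F=-2$ gives $H^0(F,\omega_F)=0$, hence $h^1(\mathcal O_F)=0$. Then $h^0(\mathcal O_F)=\chi(\mathcal O_F)=1$, i.e.\ $H^0(F,\mathcal O_F)=\kappa(b)$, with no base-change theorem needed. The standard argument for integral curves then embeds $F$ as a conic. With this reordering the rest of your proof goes through.
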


\begin{lemma}\label{lemma-MFS-geo-regular-base}
  Let $f\colon X\to C$ be a smooth Mori fiber space from a regular surface $X$ to a curve $C$ with $H^0(C,\mathcal O_C) = k$.
  Then $X$ is geometrically regular over $k$ if and only if $C$ is geometrically regular over $k$.
\end{lemma}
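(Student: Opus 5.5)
The plan is to reduce both directions to a statement about smooth morphisms after base change to $k^{1/p}$, using the criterion from \S\ref{section-field-extension}(6). A variety is geometrically regular if and only if its base change to $k^{1/p}$ is regular. So it suffices to show that $X_{k^{1/p}}$ is regular if and only if $C_{k^{1/p}}$ is regular.

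Since $f$ is smooth, the base change $f_{k^{1/p}}\colon X_{k^{1/p}}\to C_{k^{1/p}}$ is smooth and surjective. I would then use the standard fact that regularity ascends and descends along smooth (hence flat) morphisms. For a flat local homomorphism $A\to B$: if $B$ is regular then $A$ is regular (descent of regularity along flat maps, e.g.\ Matsumura 23.7). Conversely, if $A$ is regular and the closed fiber $B/\mathfrak m_AB$ is regular, then $B$ is regular. Smoothness of $f$ guarantees that all fibers of $f_{k^{1/p}}$, over every point of $C_{k^{1/p}}$, are regular: they are smooth over the residue fields, being base changes of the smooth fibers of $f$.

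Hence $X_{k^{1/p}}$ is regular if and only if $C_{k^{1/p}}$ is regular at every point of the image of $f_{k^{1/p}}$. This image is everything, because $f$ is surjective, being a Mori fibre space onto the curve $C$. Applying \S\ref{section-field-extension}(6) to both $X$ and $C$ then gives the equivalence.

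I do not expect a real obstacle. The only points to check are:
\begin{itemize}
\item that $X_{k^{1/p}}$ and $C_{k^{1/p}}$ are the relevant (possibly non-reduced) schemes, which is fine since regularity of a local ring already forces reducedness;
\item that surjectivity persists after base change, which is automatic.
\end{itemize}
If one wishes to avoid citing the commutative algebra, the ascent direction can be seen directly: locally $X$ is étale over $\mathbb A^1_C$, and $\mathbb A^1$ over a regular scheme is regular. The descent direction follows from faithful flatness.
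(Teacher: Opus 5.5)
Your proposal is correct and follows essentially the same route as the paper: base change to $k^{1/p}$, observe that $f_{k^{1/p}}$ is smooth and surjective (hence faithfully flat with regular fibres), apply Matsumura's Theorem~23.7 in both directions, and conclude with the Frobenius criterion for geometric regularity. The only difference is that you spell out the fibre-regularity and surjectivity checks that the paper leaves implicit.
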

\begin{proof}
  Consider the Frobenius base change $f_{k^{1/p}}\colon X_{k^{1/p}}\to C_{k^{1/p}}$:
  \[
    \begin{tikzcd}
      X_{k^{1/p}} \rar{\sigma'} \dar[swap]{f_{k^{1/p}}} & X \dar{f} \\ C_{k^{1/p}} \rar{\sigma} & C \rlap{.}
    \end{tikzcd}
  \]
  Smoothness and surjectivity are preserved by base change, so $f_{k^{1/p}}$ is smooth and faithfully flat.
  Therefore \cite[Theorem~23.7]{MatsumuraCRT} shows that $X_{k^{1/p}}$ is regular if and only if $C_{k^{1/p}}$ is regular.
  The claim follows.
\end{proof}

  When $\dim C = 1$, we have $\rho(X)=2$, and $\overline{\mathrm{NE}}(X)$ has exactly two extremal rays.
Every vertical curve $\Gamma_v$ satisfies $\Gamma_v\equiv\mu\,\pi^*(b_0)$ for some $\mu>0$, so the vertical classes span one of these rays; we let $\mathfrak{f}\in\overline{\mathrm{NE}}(X)$ be its generator normalized by $-K_X\cdot_k \mathfrak{f}=2$.
Equivalently, $\mathfrak{f} = \tfrac{1}{[k(b_0):k]}\,\pi^*(b_0)$ for one (and hence any) closed point $b_0\in C$.
The class $\mathfrak{f}$ is nef with $\mathfrak{f}^2=0$, and $\mathfrak{f}\cdot_k\Gamma=[K(\Gamma):K(C)]$ for every horizontal integral curve $\Gamma\subset X$.

\begin{lemma}\label{lemma-KXC-square}
  Let $f\colon X\to C$ be a smooth Mori fiber space from a regular projective surface onto a curve with $H^0(C,\mathcal O_C)=k$.
  Then $(K_{X/C})^2=0$; consequently $(K_X)^2=-4\deg_kK_C$.
\end{lemma}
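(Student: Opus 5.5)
Since $f$ is smooth of relative dimension one, the relative dualizing sheaf $\omega_{X/C}=\omega_X\otimes f^*\omega_C^{-1}$ is a line bundle, so $K_{X/C}=K_X-f^*K_C$. The plan is to compute $(K_{X/C})^2$ via a Grothendieck--Riemann--Roch/Noether-type identity for the smooth conic bundle, or more elementarily through the structure of $\mathbb P^1$-bundles. First I reduce to the case where $f$ has a section and is a $\mathbb P^1$-bundle. Intersection numbers over $k$ are invariant under field extension. I pass to a separable closure $k^s$: by \S\ref{section-field-extension}(5) the base change $X_{k^s}$ stays regular, $f_{k^s}$ stays smooth, and $X_{k^s}\to C_{k^s}$ remains a smooth conic bundle. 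If needed I then pass further to the generic fiber level. The cleanest route is this: the generic fiber $X_\eta$ is a smooth conic over $K(C)$. Using the relative anticanonical embedding $X\hookrightarrow \mathbb P(f_*\omega_{X/C}^{-1})$, $X$ is a conic bundle in a $\mathbb P^2$-bundle $\mathbb P(\mathcal E)$ with $\mathcal E=f_*\omega_{X/C}^{-1}$ locally free of rank $3$, since $h^0$ of $\omega^{-1}$ on each fiber is $3$ and cohomology and base change applies.

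Next I compute. The key identity I aim for is $\omega_{X/C}^{-1}\cong\mathcal O_{\mathbb P(\mathcal E)}(1)|_X$ together with the relation $X\in|\mathcal O(2)\otimes f^*L|$ in $\mathbb P(\mathcal E)$. Since the fibers are smooth, the discriminant section $\det$ of the quadratic form $\mathrm{Sym}^2\mathcal E^\vee\to L$ is nowhere vanishing, which forces $L^{3}\cong(\det\mathcal E)^{2}$ numerically, i.e.\ $3\deg L=2\deg\mathcal E$. Adjunction gives $\omega_{X/C}=(\mathcal O(-3)\otimes f^*\det\mathcal E\otimes\mathcal O(2)\otimes f^*L)|_X=(\mathcal O(-1)\otimes f^*(\det\mathcal E\otimes L))|_X$. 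Comparing with $\omega_{X/C}=\mathcal O(-1)|_X$ yields $\det\mathcal E\otimes L$ trivial on fibers' base, so $\deg L=-\deg\mathcal E$. Combined with the discriminant relation this gives $\deg\mathcal E=\deg L=0$. Then $(K_{X/C})^2=\mathcal O(1)^2\cdot X=\mathcal O(1)^2\cdot(2\mathcal O(1)+f^*L)=2\,\mathcal O(1)^3+\deg L=2\deg\mathcal E+\deg L=0$ (up to sign conventions on $\mathcal O(1)^3=\deg\mathcal E$).

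The main obstacle is the sign and normalization conventions: $\mathbb P(\mathcal E)$ versus its dual, and the exact discriminant relation. To sidestep these, I would first try a simpler argument. After a finite separable base change $C'\to C$ (harmless by \S\ref{section-field-extension}(5) and multiplicativity of degrees), $f$ acquires a section, hence $X\cong\mathbb P_C(\mathcal V)$ for a rank-$2$ bundle $\mathcal V$. There the standard formula $K_{X/C}=-2\xi+f^*\det\mathcal V$ with $\xi^2=\deg\mathcal V$ gives $K_{X/C}^2=4\deg\mathcal V-4\deg\mathcal V=0$ directly. I must check that the base change preserves the Mori fiber structure enough, but only smoothness is needed for this computation. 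Finally, $K_X=K_{X/C}+f^*K_C$ with $(f^*K_C)^2=0$ and $K_{X/C}\cdot f^*K_C=\deg_k K_C\cdot(K_{X/C}\cdot\mathfrak f)=-2\deg_kK_C$, using $-K_X\cdot\mathfrak f=2$. This gives $K_X^2=0+2(-2\deg_kK_C)=-4\deg_kK_C$.
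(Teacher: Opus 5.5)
Your fallback argument is essentially the paper's proof: pass to a finite separable $L/K(C)$ over which the generic conic acquires a point, and extend that point to a section of $X'=X\times_C C'\to C'$, so that $X'\cong\mathbb P_{C'}(\mathcal V)$ and $(K_{X'/C'})^2=4\xi^2-4\,\xi\cdot f'^*c_1(\mathcal V)=0$; then descend via $K_{X'/C'}\sim\phi^*K_{X/C}$ and $(\phi^*D)^2=\deg\phi\cdot D^2$, and finish with $f^*K_C\equiv(\deg_kK_C)\mathfrak f$ and $K_{X/C}\cdot\mathfrak f=-2$. Note that it is $X'$, not $X$, that becomes a $\mathbb P^1$-bundle, and that the descent rests on compatibility of $\omega_{X/C}$ with base change, not on the ground-field regularity fact you cite; the discriminant route is unnecessary, and in characteristic $2$ it would need the half-discriminant.
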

\begin{proof}
  The generic fiber $X_\eta$ is a smooth conic, so it becomes isomorphic to $\mathbb P^1$ over some finite separable extension $L/K(C)$.
  Let $h\colon C'\to C$ be the normalization of $C$ in $L$ and form
  \[
    \begin{tikzcd}
      X':=X\times_C C' \rar{\phi}\dar{f'} & X \dar{f} \\
      C' \rar{h} & C .
    \end{tikzcd}
  \]
  The $L$-point of the generic fiber extends uniquely to a section of $f'$.
  A smooth conic fibration with a section is a $\mathbb P^1$-bundle; hence there is a rank-$2$ vector bundle $\mathcal E'$ on $C'$ such that $X'\cong\mathbb P_{C'}(\mathcal E')$.
  By \cite[Exercise~III.8.4(b)]{Hartshorne-AG} we have $\omega_{X'/C'}\cong\mathcal O_{X'}(-2)\otimes f'^*\det\mathcal E'$, and $\xi:=c_1(\mathcal O_{X'}(1))$ satisfies $\xi^2= f'^*c_1(\mathcal E')\cdot\xi$ by the Grothendieck relation (see \cite[\S A.3]{Hartshorne-AG}); hence
  \[
    (K_{X'/C'})^2=(-2\xi+ f'^*c_1(\mathcal E'))^2=4\xi^2-4\,\xi\cdot f'^*c_1(\mathcal E')=0 .
  \]
  Since $\phi$ is the base change of $h$, we have $K_{X'/C'} \sim \phi^*K_{X/C}$, so $\delta\,(K_{X/C})^2=(K_{X'/C'})^2=0$, where $\delta:=\deg\phi=[L:K(C)]$.
  Since $f^*K_C\equiv(\deg_k K_C)\,\mathfrak f$ and $K_X\cdot_k\mathfrak f=-2$, we conclude
  \[
    (K_X)^2=(K_{X/C})^2+2\,K_{X/C}\cdot_k f^*K_C = -4\deg_k K_C .
    \qedhere
  \]
\end{proof}

\smallskip
The following lemma is a key step in the proof of the main theorem.
\begin{lemma}\label{lemma-reducible-fiber}
  Let $f\colon Y\to B$ be a fibration from a regular projective surface to a regular curve. Suppose that $f$ factors as
  \[
    Y\xrightarrow{\ \epsilon\ } Z\xrightarrow{\ g\ } B,
  \]
  where $\epsilon$ is a composition of contractions of $(-1)$-curves and $g$ is a Mori fiber space.
  Let $F_{t_0}$ be a reducible fiber over some closed point $t_0\in B$. 
  Suppose that $F_{t_0}$ contains a unique $(-1)$-curve $T$ and denote $k_T := H^0(T^\nu, \mathcal O_{T^\nu})$.
  Then either $\mathrm{mult}_T(F_{t_0})\ge2$, or $\mathrm{mult}_T(F_{t_0})=1$ and $[k_T:k(t_0)]>1$.
\end{lemma}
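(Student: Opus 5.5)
The plan is to prove the contrapositive by induction on the number of irreducible components of $F_{t_0}$. Assume $\mathrm{mult}_T(F_{t_0})=1$ and $k_T=k(t_0)$. We will show that $F_{t_0}$ then contains a second $(-1)$-curve, contradicting uniqueness.

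First, reduce to the case where $\epsilon$ contracts $T$ first. By Proposition~\ref{proposition-BT22-2.18}, every fiber of $g$ is an integral conic. Since $F_{t_0}$ is reducible, $\epsilon$ must contract some components of $F_{t_0}$. Contractions of $(-1)$-curves lying in different fibers commute. Hence the first curve in $F_{t_0}$ contracted by $\epsilon$ is a $(-1)$-curve already on $Y$, so it is $T$. We may therefore write $\epsilon=\epsilon'\circ\mu$, where $\mu\colon Y\to Y'$ is the contraction of $T$ to a regular point $x$ (Remark~\ref{remark-exceptional-curve-first-kind}), and $\epsilon'$ is again a composite of $(-1)$-curve contractions.

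Next, analyse the point $x$. We have $k(x)=k_T=k(t_0)$. Write $F'=\mu_*F_{t_0}=\sum m_iG_i'$. Then $\mathrm{mult}_T(F_{t_0})=\mathrm{ord}_x(F')=\sum m_i\,\mathrm{ord}_x(G_i')=1$. So $x$ lies on exactly one component $G'$ of $F'$, with $m_{G'}=1$ and $G'$ regular at $x$. There are two cases.

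If $F'$ is irreducible, then $F'=G'$ is an integral conic fiber with $(G')^2=0$ and $K_{Y'}\cdot_{k(t_0)}G'=-2$. It has the $k(t_0)$-rational point $x$, so $G'\cong\mathbb P^1_{k(t_0)}$ and $k_{G'}=k(t_0)$. On $Y$ the strict transform $G$ then satisfies $G^2=-1$ and $K_Y\cdot G=-1$ over $k(t_0)$, because $\mathrm{ord}_x G'=1$ and $[k(x):k(t_0)]=1$. Since $G$ is regular and $Y$ is regular, $G$ is a second $(-1)$-curve in $F_{t_0}$, a contradiction.

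If $F'$ is reducible, then $Y'\xrightarrow{\epsilon'}Z\to B$ is again of the required form, so $F'$ contains a $(-1)$-curve. Any $(-1)$-curve $D'\subset F'$ not passing through $x$ has a strict transform on $Y$ that is a $(-1)$-curve different from $T$, which is impossible. Thus every $(-1)$-curve of $F'$ passes through $x$, and so equals $G'$. It follows that $G'$ is the unique $(-1)$-curve of $F'$ and has multiplicity $1$. Moreover, $G'\cong\mathbb P^1_{k_{G'}}$ has a point $x$ with $k(t_0)\subseteq k_{G'}\subseteq k(x)=k(t_0)$, so $k_{G'}=k(t_0)$. Therefore $F'$ satisfies the hypotheses and the negated conclusion, with fewer components, and induction gives a contradiction.

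The step I expect to need most care is the local analysis at $x$, that is, the identity $\mathrm{mult}_T(F_{t_0})=\mathrm{ord}_x(F')$. This follows from the blow-up formula $\mu^*D=\widetilde D+\mathrm{ord}_x(D)E$ of \S\ref{section-bl}, applied to the Cartier divisor $F'=f'^*(t_0)$. Also delicate is the justification that $\epsilon$ may be reordered so that $T$ is contracted first. Both steps should be routine once written out.
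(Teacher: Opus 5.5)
Your proof is correct and uses the same ingredients as the paper's: multiplicity one and $k_T=k(t_0)$ force every blown-up centre to be a $k(t_0)$-rational point on a single reduced component. The conic fiber is then $\mathbb P^1_{k(t_0)}$, and its blow-up at one rational point already carries two $(-1)$-curves of multiplicity one. The only difference is organizational: you peel off $T$ and induct on the number of components, re-establishing the hypotheses on $Y'$ at each step, whereas the paper analyses the whole chain $Z=Y_0\leftarrow Y_1\leftarrow\cdots\leftarrow Y_N=Y$ at once ($P_i\in E_{i-1}$, all $P_i$ rational, $m_1\le\cdots\le m_N=1$) and gets the contradiction from the forced position of $P_2$.
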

\begin{proof}
  By Proposition~\ref{proposition-BT22-2.18}, the fiber $G_{t_0}:=g^*t_0$ is an integral conic over $\kappa:=k(t_0)$ with $H^0(G_{t_0},\mathcal O_{G_{t_0}})=\kappa$.
  Decompose $\epsilon$ as
  \[
    \begin{tikzcd}[column sep=2.2em]
      Y=Y_N \ar[r,"\epsilon_N"]
      &Y_{N-1}\ar[r,"\epsilon_{N-1}"]
      &\cdots\ar[r,"\epsilon_2"]
      &Y_1\ar[r,"\epsilon_1"]
      &Y_0=Z,
    \end{tikzcd}
  \]
  where each $\epsilon_i\colon Y_i\to Y_{i-1}$ is the blow-up of a regular closed point $P_i\in Y_{i-1}$.
  Since blow-ups away from $t_0$ do not affect $F_{t_0}$, we may assume that every $P_i$ lies over $t_0$. Let $E_i\subset Y_i$ be the exceptional curve of $\epsilon_i$. For simplicity, we also write $E_i$ for its strict transform on $Y_j$ for every $j\geq i$.

  Suppose that $F_{t_0}$ has a unique $(-1)$-curve $T$ with $\mathrm{mult}_T(F_{t_0})=1$ and $k_T=\kappa$.
  Then $E_N=T$. Moreover, uniqueness forces $P_i\in E_{i-1}\subset Y_{i-1}$ for every $2\leq i\leq N$; otherwise $E_{i-1}$ and $E_i$ would be disjoint $(-1)$-curves on $Y_i$, and subsequent blow-ups would leave at least one $(-1)$-curve above each of them on $Y$.
  Hence
  \[
    \kappa\subseteq k(P_1)\subseteq\cdots\subseteq k(P_N)=k_T=\kappa,
  \]
  so every $P_i$ is $\kappa$-rational.
  Set $m_i=\mathrm{mult}_{E_i}(F_{t_0})$. Then
  \[
    1\leq m_1\leq\cdots\leq m_N=\mathrm{mult}_T(F_{t_0})=1.
  \]
  Hence $P_1$ is a regular point of multiplicity one on $G_{t_0}$, so $G_{t_0}\simeq\mathbb P^1_\kappa$. Since $(G_{t_0}^2)_\kappa=0$, the first blow-up produces two $(-1)$-curves of multiplicity one: $E_1$ and the strict transform of $G_{t_0}$.

  If $N=1$, this already contradicts the uniqueness of $T$. Thus $N\geq2$, and $P_2$ must be the intersection of $E_1$ and the strict transform of $G_{t_0}$; otherwise $Y$ would contain at least two $(-1)$-curves.
  But then $E_2$ has multiplicity $2$, contradicting $m_2=1$.
\end{proof}

\section{Geometric regularity of Mori fiber spaces}\label{section-MF}%
\begin{proposition}\label{proposition-MF-geo-normal}
  Let $k$ be a field of characteristic $p\ge5$.
  Let $f\colon X\to C$ be a smooth Mori fiber space from a regular surface $X$ onto a curve $C$ with $H^0(C, \mathcal{O}_C) = k$.
  Suppose that there exists an effective $\mathbb Q$-divisor  $\Delta$ such that
  \begin{itemize}
  \item[\rm(1)] $-(K_X + \Delta)$ is nef, and 
  \item[\rm(2)]
  for every component $\Gamma$ of $\Delta$ that maps birationally to $C$, we have $\coeff_\Gamma(\Delta) \le 1$.
  \end{itemize}
  Then $X$ is geometrically regular.
\end{proposition}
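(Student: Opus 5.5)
By Lemma~\ref{lemma-MFS-geo-regular-base}, $X$ is geometrically regular over $k$ if and only if $C$ is. So it suffices to show that $C$ is geometrically regular, i.e.\ that $C_{k^{1/p}}$ is regular. Suppose not. Since $H^0(C,\mathcal O_C)=k$ and $C$ is a regular curve, the non-smooth locus of $C$ over $k$ is then nonempty. The plan is to show that this forces $\deg_k K_C$ to be large, and then to contradict the nefness of $-(K_X+\Delta)$.

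\textbf{Step 1 (curve input).} For a regular projective curve $C$ with $H^0(C,\mathcal O_C)=k$ that is not geometrically regular, I will use the genus-change results for curves (Tate's theorem, in the form of \cite{tanaka_invariants_2021} or \cite{Schroer2211}). These say that the arithmetic genus drops by a multiple of $(p-1)/2$ under passage to $(C_{k^{1/p}})^\nu_{\rm red}$. Concretely, Theorem~\ref{theorem-can-bc} in dimension one gives
\[
  \pi^*K_C\sim K_{C'}+(p-1)\mathfrak C,
\]
where $C'=(C_{k^{1/p}})^\nu_{\rm red}$ and $\mathfrak C>0$. Since $\pi$ is purely inseparable of some degree, $\deg_k K_C\ge \tfrac{1}{\deg\pi}\bigl(-2\,h+(p-1)\deg\mathfrak C\bigr)$, where $h$ is the degree of $H^0(C',\mathcal O_{C'})$ over $k$. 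Working this out, I aim for the inequality $\deg_k K_C\ge (p-1)-2>0$ in the relevant normalization, or at least $\deg_k K_C>0$ when $p\ge5$. In particular, $C$ is not of genus zero type.

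\textbf{Step 2 (surface input).} By Lemma~\ref{lemma-KXC-square}, $(K_X)^2=-4\deg_kK_C<0$. Recall that $\overline{\mathrm{NE}}(X)$ has two rays, one spanned by $\mathfrak f$ and one spanned by some class $R$. I will use $K_X\cdot\mathfrak f=-2$ together with $K_{X/C}^2=0$ to show that the second ray is spanned by a class with $K_{X/C}\cdot R\le 0$ and that $K_X\cdot R>0$. More precisely, $K_{X/C}$ is a class with square zero and $K_{X/C}\cdot\mathfrak f=-2$, so $-K_{X/C}$ lies on the boundary of the positive cone. On a ruled surface obtained by the base change $X'=\mathbb P_{C'}(\mathcal E')$ (after a separable extension, which preserves all the numerics), the negative of the relative canonical class spans the other boundary ray of the nef cone, up to a multiple.

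\textbf{Step 3 (contradiction).} Write $\Delta\equiv a\,(-K_{X/C})+b\,\mathfrak f+(\text{vertical part})$ in terms of its components. Horizontal components $\Gamma$ satisfy $\mathfrak f\cdot\Gamma=[K(\Gamma):K(C)]$. Nefness of $-(K_X+\Delta)$ gives $-(K_X+\Delta)\cdot\mathfrak f\ge0$, so $\Delta\cdot\mathfrak f\le 2$. Intersecting $-(K_X+\Delta)$ with the extremal class on the other ray, or equivalently computing $(K_X+\Delta)^2\ge0$ together with $-(K_X+\Delta)\cdot(-K_{X/C})\ge0$, I expect to find that $-(K_X+\Delta)\equiv -K_{X/C}-f^*K_C-\Delta$ has negative intersection with $-K_{X/C}$ (or with a horizontal curve) unless $\deg K_C\le 0$. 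This contradicts Step~1. Hypothesis (2), the coefficient bound on birational horizontal components, is needed to rule out the case where $\Delta$ contains a section with large coefficient absorbing the positivity. Multisection components contribute at least $2$ to $\Delta\cdot\mathfrak f$, hence are at most one component, with coefficient at most $1$.

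The main obstacle is Step 3: controlling horizontal components of $\Delta$ against the second extremal ray. I would try first to pass to the separable base change $X'=\mathbb P(\mathcal E')$, where the cone is explicit. I would also handle the case where $\Delta$ has a purely inseparable multisection separately, since for such a multisection the degree over $C$ is divisible by $p\ge5>2$, which violates $\Delta\cdot\mathfrak f\le2$.
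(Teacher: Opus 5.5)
\textbf{Genuine gap: your Steps 2--3 contain no working argument, and the inseparable case is dismissed on a false claim.} Your Step 1 and the use of Lemma~\ref{lemma-KXC-square} agree with the paper: $\deg_k K_C>0$, hence $(K_X)^2<0$. After that, two of your claims are false.

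First, $-K_{X/C}$ need not span the second boundary ray of the nef cone. It lies on the boundary of the positive cone, so it is pseudo-effective, but it is nef only if no horizontal curve is negative against it. In Example~\ref{example-pair} one has $-K_{X/C}\equiv 2C_0+np\,\mathfrak f$ and $-K_{X/C}\cdot_k C_0=-np<0$. In fact, nefness of $-K_{X/C}$ always fails in the situation you want to exclude. Since $(-K_X)\cdot(-K_{X/C})=-2\deg_kK_C$ and $-(K_X+\Delta)$ is nef against the pseudo-effective class $-K_{X/C}$, you get $\Delta\cdot(-K_{X/C})\le -2\deg_kK_C<0$. So intersecting with extremal classes cannot by itself give the contradiction. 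Instead, $\Delta$ must contain a horizontal component $\Gamma$ with $(\Gamma)^2<0$; the paper obtains it from $\Delta^2\le (K_X)^2<0$ and irreducibility of fibers.

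The whole proof then consists of excluding such a $\Gamma$, and the tool you are missing is adjunction along $\Gamma$. Set $c=\coeff_\Gamma\Delta$ and $e_\Gamma=[K(\Gamma):K(C)]$. Then $c\,e_\Gamma\le 2$ and $0\ge (K_X+c\Gamma)\cdot\Gamma\ge \deg K_{\Gamma^\nu}+(c-1)(\Gamma)^2$. If $\Gamma\to C$ is separable, Riemann--Hurwitz gives $\deg K_{\Gamma^\nu}>0$. Also $c\le 1$: this follows from $ce_\Gamma\le2$ when $e_\Gamma\ge2$, and from hypothesis~(2) when $e_\Gamma=1$. Together these give a contradiction. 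This is the only place hypothesis~(2) enters.

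Second, your treatment of inseparable multisections is wrong. $\Delta$ has rational coefficients, so a component contributes $c\,e_\Gamma$, not $e_\Gamma$, to $\Delta\cdot\mathfrak f$. Thus $p\mid e_\Gamma$ only forces $c\le 2/p$; it does not violate $\Delta\cdot\mathfrak f\le 2$. For the same reason, ``multisection components contribute at least $2$'' is false.

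This is precisely the hard case in the paper. Riemann--Hurwitz is unavailable here: an inseparable cover of a curve with $\deg K_C>0$ can have $\deg K_{\Gamma^\nu}=-2$, cf.\ Example~\ref{example-foliation-geo-nonred-curve}. The paper instead argues arithmetically:
\begin{enumerate}[\rm(i)]
\item Write $\Gamma\equiv-\tfrac{e_\Gamma}{2}K_X+b\,\mathfrak f$.
\item Integrality of $K_X\cdot_{k_\Gamma}\Gamma$ and $(\Gamma)^2_{k_\Gamma}$ gives $[k_\Gamma:k]\mid 2b$.
\item From $(K_X+c\Gamma)\cdot\Gamma\le0$ deduce $b\le 0$, hence $K_X\cdot_{k_\Gamma}\Gamma\ge1$ and $-(\Gamma)^2_{k_\Gamma}\ge\tfrac{e_\Gamma}{2}K_X\cdot_{k_\Gamma}\Gamma$.
\item Combine this with $(1-\tfrac{2}{e_\Gamma})\bigl(-(\Gamma)^2_{k_\Gamma}\bigr)\le 2$ to force $e_\Gamma=p=5$ and $K_X\cdot_{k_\Gamma}\Gamma=1$.
\item Since $e_\Gamma$ is odd, parity forces $b<0$, so $-(\Gamma)^2_{k_\Gamma}\ge e_\Gamma$, which contradicts the bound in (iv).
\end{enumerate}
This is where $p\ge5$ is used a second time, beyond Step~1. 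Without an argument of this kind, your proof does not cover a purely inseparable multisection carrying a small coefficient.
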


\begin{proof}
  Assume for contradiction that $X$ is geometrically non-regular.
  By Lemma~\ref{lemma-MFS-geo-regular-base}, the curve $C$ is geometrically non-regular, and hence geometrically non-normal.
  Since $p\ge5$, we have $\deg_k K_C > 0$ by Theorem~\ref{theorem-can-bc}.
  By Lemma~\ref{lemma-KXC-square}, we have $(K_{X/C})^2 = 0$ and thus $(K_X)^2=-4\deg_k K_C<0$.
  Since $-(K_X + \Delta)$ is nef, $\Delta^2 \le (-(K_X + \Delta)+\Delta)^2 = (K_X)^2 < 0$.
  Hence there is a component $\Gamma$ of $\Delta$ such that $(\Gamma)^2_k<0$.
  Since all fibers of $f$ are irreducible, this implies that $\Gamma$ dominates $C$.

 We can write that $\Delta = c\Gamma + \Delta'$ where $c=\coeff_\Gamma(\Delta)$. Let $\nu\colon\Gamma^\nu\to\Gamma$ be the normalization, set $k_\Gamma:=H^0(\Gamma^\nu,\mathcal O_{\Gamma^\nu})$ and denote $\psi:=f\circ\nu\colon\Gamma^\nu\to C$. Set $e_\Gamma :=[K(\Gamma):K(C)]$. Then  $e_\Gamma =\Gamma\cdot_k\mathfrak f$, where $\mathfrak f=\tfrac{1}{[k(b_0):k]}f^*(b_0)$ as in Subsection~\ref{subsection-Mori-fiber-spaces}. 
Since $-(K_X+\Delta)$ is nef, $ -(K_X+\Delta)\cdot_k \mathfrak f = 2 -  c e_{\Gamma} - \Delta' \cdot_k \mathfrak f \geq 0$, and it follows that
  \begin{equation}\label{equation0}
    c\,e_{\Gamma} \le 2 - \Delta' \cdot_k \mathfrak f \le2.
  \end{equation}
Considering the intersection with $\Gamma$, we have
    \begin{align}\label{equation1}
(K_X+  c\,\Gamma)\cdot_{k_\Gamma} \Gamma \leq (K_X+  c\,\Gamma + \Delta')\cdot_{k_\Gamma} \Gamma =  (K_X+\Delta)\cdot_{k_\Gamma} \Gamma \leq 0.
  \end{align}
  In turn, applying the Adjunction formula (Proposition~\ref{proposition-adj}) we obtain
  \begin{equation}\label{equation-adj}
0\geq (K_X+  c\,\Gamma)\cdot_{k_\Gamma} \Gamma = (K_X+ \Gamma)\cdot_{k_\Gamma} \Gamma + (c-1)  (\Gamma)^2_{k_\Gamma} \geq \deg_{k_\Gamma}K_{\Gamma^\nu} + (c-1)  (\Gamma)^2_{k_\Gamma}.
  \end{equation}
 
 If the projection $\Gamma \to C$ is separable, then by  Riemann--Hurwitz \cite[Theorem~7.4.16]{Liu-AGAC}, we have $\deg_kK_{\Gamma^\nu} >0$. Remark that $c\leq 1$ always holds: this is obvious when $e_\Gamma \geq 2$ by (\ref{equation0}); and when $e_\Gamma =1$ this is assumption (2). 
 We derive a contradiction by the inequality (\ref{equation-adj}) since  $(\Gamma)^2_{k_\Gamma}<0$.
 
 \smallskip
 Now assume  $\Gamma \to C$ is inseparable, which implies that $p\mid e_\Gamma$.
Since $X\to C$ is a Mori fiber space, we can take the basis $K_{X},\mathfrak f$ of $\mathrm{NS}(X)_{\mathbb Q}$.
Combining the information $(\mathfrak f)_k^2=0$, $K_{X}\cdot_k\mathfrak f =-2$ and $\Gamma\cdot_k\mathfrak f = e_\Gamma$, we can write that
$$\Gamma\equiv -\frac{e_\Gamma}{2}K_{X}+ b \mathfrak f$$
for some $b\in \mathbb{Q}$.
Set $\deg_k K_C = d$. Then $(K_X)_k^2 = -4d$. Notice that  $\deg_{k_{\Gamma}} (f^*K_C|_{\Gamma}) = \frac{d e_\Gamma}{[k_\Gamma:k]} \in \mathbb{Z}$; this implies $[k_\Gamma:k] \mid d e_\Gamma$.
Considering the intersection numbers with $\Gamma$ over $k_{\Gamma}$, we have,
$$ K_X\cdot_{k_\Gamma} \Gamma = \frac{K_X\cdot_{k} \Gamma}{[k_\Gamma:k]}= \frac{2(de_\Gamma-b)}{[k_\Gamma:k]},\quad 
(\Gamma)^2_{k_\Gamma}= \frac{-e_\Gamma(de_\Gamma -2b)}{[k_\Gamma:k]} =-\frac{e_\Gamma}{2}(K_X\cdot_{k_\Gamma} \Gamma) + \frac{be_\Gamma}{[k_\Gamma:k]} <0.$$
Since the above numbers are integers, $2b$ is also divisible by $[k_\Gamma:k]$.
By the inequality (\ref{equation1}) we have
$$(K_X+  c\,\Gamma)\cdot_{k_\Gamma} \Gamma = \frac{2(de_\Gamma-b) -ce_\Gamma(de_\Gamma -2b)}{[k_\Gamma:k]} \geq \frac{2(de_\Gamma-b) -2(de_\Gamma -2b)}{[k_\Gamma:k]} = \frac{2b}{[k_\Gamma:k]} \leq 0,$$
and thus $~K_X\cdot_{k_\Gamma} \Gamma >0$.
In turn, we conclude that
\begin{itemize}
\item[($*$)]
$-(\Gamma)^2_{k_\Gamma} \geq \frac{e_\Gamma}{2}(K_X\cdot_{k_\Gamma} \Gamma) $; and if moreover $K_X\cdot_{k_\Gamma} \Gamma = 1$ and $e_\Gamma$ is odd, say, $e_\Gamma = p \geq 5$ then $b<0$, thus $-(\Gamma)^2_{k_\Gamma} \geq \frac{e_\Gamma}{2} - \frac{be_\Gamma}{[k_\Gamma:k]} \geq e_\Gamma$.
\end{itemize}
Combining $\deg_{k_\Gamma}K_{\Gamma^\nu}  \geq -2$, the inequality (\ref{equation-adj}) and $ce_\Gamma \leq 2$, we have
$$0\geq \deg_{k_\Gamma}K_{\Gamma^\nu} + (c-1)  (\Gamma^2)_{k_\Gamma} \geq -2 + (c-1) (\Gamma)^2_{k_\Gamma}  \geq -2 + (\frac{2}{e_\Gamma}-1) (\Gamma)^2_{k_\Gamma},$$
thus
$$ (1-\frac{2}{e_\Gamma})( -(\Gamma)^2_{k_\Gamma}) \leq 2.$$
Then applying the statement ($*$), we obtain that
$$(1-\frac{2}{e_\Gamma})( -(\Gamma)^2_{k_\Gamma})  \geq (\frac{e_\Gamma}{2} -1)(K_X\cdot_{k_\Gamma} \Gamma) \leq 2,$$
this requires $e_\Gamma \leq 5$ and $K_X\cdot_{k_\Gamma} \Gamma = 1$, but then $-(\Gamma)^2_{k_\Gamma} \geq e_\Gamma$, which causes a contradiction by
$$(1-\frac{2}{e_\Gamma})( -(\Gamma)^2_{k_\Gamma}) \geq e_\Gamma -2 \geq 3.$$
\end{proof}

The following example shows that the coefficient bound in Proposition~\ref{proposition-MF-geo-normal} is necessary.
\begin{example}\label{example-pair}
  Let $k=\mathbb F_p(s,t)$ and let \[
    C=(X_0^p+sX_1^p+tX_2^p=0)\subseteq\mathbb P^2_k.
  \]
  Then $C$ is a regular, geometrically non-reduced projective curve with $H^0(C,\mathcal O_C)=k$.
  For $n>0$, set $\mathcal E=\mathcal O_C\oplus\mathcal O_C(-n)$ and $X=\mathbb P_C(\mathcal E)$.
  Then $\deg_k\mathcal E=-np$, and the projection $f\colon X\to C$ is a smooth Mori fiber space;
  moreover, $X$ is geometrically non-reduced.
  Let $C_0$ be the section corresponding to $\mathcal E\twoheadrightarrow\mathcal O_C(-n)$ \cite[Proposition~II.7.12]{Hartshorne-AG}.
  Then $\mathcal O_X(C_0)\cong\mathcal O_X(1)$, and the relative Euler sequence gives \[
    \omega_X\cong \mathcal O_X(-2C_0)\otimes f^*(\omega_C\otimes\det\mathcal E).
  \]
  Hence \[
    K_X+2C_0+(np-\deg_k\omega_C)\mathfrak f\equiv0, \qquad
    \mathfrak f:=\frac{f^*(b)}{[k(b):k]}
  \]
  for any closed point $b\in C$.
  Choose $n$ with $np\ge\deg_k\omega_C$ and set \[
    \Delta:=2C_0+(np-\deg_k\omega_C)\mathfrak f.
  \]
  Then $\Delta\ge0$ and $-(K_X+\Delta)\equiv0$, while $\coeff_{C_0}(\Delta)=2$.
  Thus $(X,\Delta)$ is not log canonical.
\end{example}

The following example shows that the smoothness assumption in Proposition~\ref{proposition-MF-geo-normal} is also necessary.
\begin{example}\label{example-MF-nonsmooth}
  Let $p > 2$, let $a,d$ be algebraically independent over $\mathbb F_p$, and set $k:=\mathbb F_p(a,d)$ and $C:=\mathbb P^1_k$ with homogeneous coordinates $[U:V]$.
  Let $\mathcal E:=\mathcal O_C(-p)^{\oplus2}\oplus\mathcal O_C$, write $\pi\colon \mathbf P:=\mathbb P_C(\mathcal E)\to C$, and let $x,y,z$ be the tautological coordinates associated with the three summands, so that $x,y$ are sections of $\mathcal O_{\mathbf P}(1)\otimes\pi^*\mathcal O_C(p)$ and $z$ is a section of $\mathcal O_{\mathbf P}(1)$.
  Let $G:=(U^p-aV^p)(U^p-UV^{p-1})\in H^0(C,\mathcal O_C(2p))$.
  Then $x^2-dy^2-Gz^2$ is a section of $\mathcal O_{\mathbf P}(2)\otimes\pi^*\mathcal O_C(2p)$, and we define \[
    f\colon X:=(x^2-dy^2-Gz^2=0)\subset P\longrightarrow C.
  \]
  The zero divisor $Z(G)\subset C$ is reduced.
  For each $b\in Z(G)$, the fiber $X_b$ has the unique singular point
  $P_b=[0:0:1]$. On the chart $z\ne0$, the equation near $P_b$ is
  $x^2-dy^2=tu$, where $t$ is a uniformizer at $b$ and $u$ is a unit.
  Hence $X$ is regular at $P_b$, and it is smooth elsewhere.
  Since every fiber is integral, $\rho(X/C)=1$, and $-K_X$ is relatively ample. Thus $f$ is a Mori fiber space,
  but it is not smooth.

  Let $b_a\in C$ be the closed point defined by $U^p-aV^p=0$ and
  $P_{b_a}=[0:0:1]\in X_{b_a}$. After adjoining $\alpha:=a^{1/p}$ and
  $\delta:=\sqrt d$, work on the charts $V\ne0$ and $z\ne0$ and set
  \[
    w:=U/V,\qquad s:=w-\alpha,\qquad r:=x-\delta y,\qquad h:=w^p-w.
  \]
  Since $h$ is a unit at $w=\alpha$, putting $q:=(x+\delta y)/h$ gives the
  following completed local equation at the point above $P_{b_a}$:
  \[
    rq=s^p,
  \]
  which is a rational double point of type $A_{p-1}$. Hence $X$ is geometrically
  normal but not geometrically regular.

  Set $\xi:=\mathcal O_{\mathbf P}(1)|_X$, let $F$ be the class of a fiber of $f$ over some $k$-closed point of $C$, and put $D:=\{z=0\}$. Then $D\sim\xi$ is an integral
  bisection, and adjunction gives $K_X=-\xi-2F$.
  For $\Delta:=\bigl(1-\frac2p\bigr)D$ we have
  \[
    -(K_X+\Delta)=\frac2p(\xi+pF),
  \]
  which is nef because $\mathcal E\otimes\mathcal O_C(p)$ is globally generated.
  Moreover, $(X,\Delta)$ is geometrically klt: over $\overline k$, the divisor $D$ splits into two disjoint smooth sections avoiding the singular locus of $X_{\overline k}$.
  In particular, all the hypotheses of Proposition~\ref{proposition-MF-geo-normal} except smoothness hold, whereas its conclusion fails.
\end{example}

\section{Geometric reducedness of $K$-trivial surfaces}\label{section-proof}
This section is devoted to proving the following result.
\begin{theorem}\label{theorem-geo-int-K-trivial}
  Let $\ell$ be a field of characteristic $p\ge 5$ and let $k:=\ell^{1/p}$ be the Frobenius extension of $\ell$.
  Let $S$ be a projective regular surface over $\ell$ such that $H^0(S,\mathcal{O}_S)=\ell$ and $K_S \equiv 0$.
  Assume that $S$ is geometrically non-reduced.

  Then $p = 5$, and $X:=(S\otimes_{\ell} k)_{\rm red}^{\nu}$ has a unique non-canonical singular point $P$,
  which is a rational singularity of multiplicity $3$, and whose minimal resolution has exceptional locus $E_1 \cup E_2$ with $(E_1)^2_{k} = -3$, $(E_2)^2_{k} = -2$.
\end{theorem}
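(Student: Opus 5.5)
We follow the strategy sketched in \S\ref{subsection-strategy}. Set $X:=(S\otimes_\ell k)^\nu_{\rm red}$ with $\pi\colon X\to S$. Since $K_S\equiv 0$ and $S$ is geometrically non-reduced, Theorem~\ref{theorem-can-bc} gives
\[
K_X+(p-1)(\mathfrak M+\mathfrak F)\equiv 0
\]
with $\mathfrak M>0$ movable. Regularity of $S$ gives $p$-factoriality: for every integral curve $C\subset X$, $pC=\pi^*\pi_*C$ up to a Cartier multiple. In particular $pC$ is Cartier, so $C\cdot_{k_C}D\in\frac1p\mathbb Z$ for every Weil divisor $D$.

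We then take the minimal resolution $\sigma\colon\widetilde X\to X$ and write
\[
K_{\widetilde X}+(p-1)(\widetilde{\mathfrak M}+\widetilde{\mathfrak F})+\textstyle\sum_i m_iE_i\equiv 0 .
\]
Minimality gives $m_i\ge0$, and $m_i>0$ exactly over non-canonical points. By Theorem~\ref{theorem-mmp-surf}, $K_{\widetilde X}$ is not pseudo-effective, because $\widetilde{\mathfrak M}\neq0$ is movable with $K_{\widetilde X}\cdot\widetilde{\mathfrak M}<0$. So a $K_{\widetilde X}$-MMP $\epsilon\colon\widetilde X\to Z$ ends in a Mori fiber space $g\colon Z\to B$. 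Pushing forward gives
\[
K_Z+(p-1)(M_Z+F_Z)+E_Z\equiv0 .
\]

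\textbf{Case $\dim B=0$.} Here $Z$ is a regular del Pezzo surface with $\rho=1$. Write $M_Z\equiv\lambda(-K_Z)$. Then $(p-1)\lambda\le1$, and $M_Z$ is a nonzero movable integral divisor. By Theorem~\ref{theorem-bd-del-pezzo}, $K_Z^2\le9$, so $M_Z^2\le 9/(p-1)^2<1$ for $p\ge5$. This is incompatible with a movable curve on a $\rho=1$ surface once we check that $M_Z^2\ge1$ over $k_{M}$; the Hodge index and integrality over the relevant field give this. So this case is excluded. We expect it to be routine, possibly using Theorem~\ref{theorem-PW22-1.5} on $Z$ directly.

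\textbf{Case $\dim B=1$.} Intersect with a general fiber $G$, a conic with $-K_Z\cdot G=2$. Since $p-1\ge4$, we get $(M_Z+F_Z)\cdot G=0$, so $\widetilde{\mathfrak M}+\widetilde{\mathfrak F}$ is $f$-vertical. The remaining degree $2$ comes from $E_Z\cdot G=2$. Using $m_i\le1$ from the log-canonical-type bound on $E_i$ (discrepancies of the resolution of a normal surface with $K_X$ anti-effective), together with the integrality and separability considerations of Proposition~\ref{proposition-MF-geo-normal}, we find a unique horizontal $E_1$. Then $P=\sigma(E_1)$ is the unique non-canonical point.

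Since $\mathfrak M$ is vertical and movable, a general $M\in|\mathfrak M|$ contains some $\overline F_b=\sigma_*F_b$. Combining $\overline F_b^2=\sigma$-correction terms with $p$-factoriality and the equation restricted to $\overline F_b$ gives $(\overline F_b)^2_{k(b)}\in\{1/p,2/p\}$. The correction terms come only from $P$, with $F_b\cdot E_1$ bounded by $2$.

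\textbf{Subcase $(\overline F_b)^2=1/p$.} For every fiber $F_t$ we get $\overline F_t\cdot\overline F_b=1/p$. This forces exactly one component $T$ of $F_t$, of multiplicity one, through $P$, with $k_T=k(t)$. Applying this to a reducible fiber, such a fiber exists since the MMP is nontrivial whenever $E_1$ is not a section with the required self-intersection. Its unique $(-1)$-curve must meet $E_1$, and Lemma~\ref{lemma-reducible-fiber} gives a contradiction. If no reducible fiber exists, then $\widetilde X=Z$ and Proposition~\ref{proposition-MF-geo-normal}-type arguments on $E_1$ give a contradiction.

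\textbf{Subcase $(\overline F_b)^2=2/p$.} The local contribution at $P$ is computed from the fundamental cycle of the exceptional configuration meeting $F_b$. The equation $(K_{\widetilde X}+\sum m_iE_i)\cdot E_j=0$ for the exceptional $E_j$, with adjunction on each $E_j$ as in Proposition~\ref{proposition-adj}, yields a finite Diophantine system in $p$, the $m_i$, and the $E_j^2$. We expect its only solution to be $p=5$ with chain $E_1\cup E_2$, $E_1^2=-3$, $E_2^2=-2$, fundamental cycle of multiplicity $3$, and rationality via Proposition~\ref{proposition-cont}.

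This last numerical case analysis, which pins down $p=5$ and the $(-3,-2)$ configuration while excluding all others, is the main obstacle. We would attack it by bounding $m_1\cdot(p-1)$ against $2/p$ and enumerating the possible dual graphs at $P$.
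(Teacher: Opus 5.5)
Your outline follows the paper correctly up to the verticality of $\widetilde{\mathfrak M}+\widetilde{\mathfrak F}$, including the exclusion of $\dim B=0$ via $K_Z^2\le 9$. The step meant to drive the numerics, however, is false: there is no bound $m_i\le1$. Indeed $m_i=a_i+(p-1)b_i$ contains $(p-1)$ times the coefficient of $E_i$ in $\sigma^*(\mathfrak M+\mathfrak F)$, and in the surface of Theorem~\ref{theorem-exam-K-trivial-5} one has $E'=2E_1+E_2$.

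The missing idea is to use the fibre $F_b\subseteq\widetilde{\mathfrak M}$ \emph{before} analysing $E_1$. Write $\mathfrak M+\mathfrak F=\overline F_b+\overline V$, so the boundary on $\widetilde X$ contains $(p-1)F_b$. Restricting $K_{\widetilde X}+(p-1)(F_b+V)+\sum_i m_iE_i\equiv0$ to a horizontal $E_1$ gives $(m_1-1)(E_1)^2+(p-1)F_b\cdot E_1+\cdots\le2$ with $(p-1)F_b\cdot E_1\ge4$, so $m_1\ge2$. Your bound $m_1\le 1$ would turn this into an outright contradiction for every $p\ge5$, contradicting that example. Restricting to $F_b$ then gives $m_1=2$, $E_1\cdot_{k(b)}F_b=1$, $V\cdot F_b=0$, and $E_1$ is the unique horizontal curve with $m_i>0$ (a section).

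Everything afterwards rests on $2=m_1=a_1+(p-1)(c_1+d_1)$ with $c_1,d_1\in\frac1p\mathbb Z_{\ge0}$ and $c_1=(\overline F_b)^2_{k(b)}$. This yields the dichotomy $a_1\in\{2/p,(p+1)/p\}$; for $p=5$, adjunction on $\overline F_b$ alone leaves $3/5$ open. It is also what settles the $2/p$ case, which you only conjecture, although that is exactly where $p=5$ and the $(-3,-2)$ chain come from. That case is short:
\begin{itemize}
\item Adjunction on $E_1$ for $\sigma^*K_X=K_{\widetilde X}+\sum_i a_iE_i$ with $a_1=2/p$ gives $(E_1)^2_k\ge-2p/(p-2)$.
\item For $p\ge7$ this forces $(E_1)^2_k=-2$, contradicting $(E_1)^2_k+(p-1)F_b\cdot_kE_1\le2$.
\item For $p=5$, intersecting $\sigma^*\overline F_b=F_b+\frac25E_1+\sum_{i\ge2}c_iE_i$ with $E_1$ forces $(E_1)^2_k=-3$ and a unique neighbour $E_2$ with $c_2=\frac15$.
\item The $E_1$- and $E_2$-restrictions of the crepant relation then give $m_2=1$ and $r=2$, and intersecting $\sigma^*\overline F_b$ with $E_2$ gives $(E_2)^2_k=-2$.
\end{itemize}

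Second, ``$P$ is the unique non-canonical point'' does not follow from the uniqueness of the horizontal $E_1$. You must still exclude vertical $E_i$ with $m_i>0$ over other points, and the vertical part $V$. The paper shows $\overline V\cdot\overline F_b=0$, deduces $V\cdot E_i=0$ and $d_i=0$ over $P$ from $c_i>0$, and then applies Lemma~\ref{lemma-crepant-connected}. Thus $\Supp\mathbf D$ is connected because $\Supp\epsilon_*\mathbf D$ is, while $V\cup E''$ misses $F_b\cup E'$, so $V=E''=0$.

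This matters for your $1/p$ subcase. Only with exactly $K_{\widetilde X}+(p-1)F_b+E'\equiv0$ do you get $E'\cdot_{k_T}T=1$ for \emph{every} $(-1)$-curve $T$ in the fibre through $E_2$, hence $P\in\overline T$. Only then does $\overline F_b\cdot_k\overline F_t=\frac1p[k(t):k]$ make that $(-1)$-curve unique and of multiplicity one, so that Lemma~\ref{lemma-reducible-fiber} applies.

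Your branch with no reducible fibre also lacks an argument, and Proposition~\ref{proposition-MF-geo-normal} does not apply there, since the section $E_1$ has coefficient $2$. The paper argues by parity instead. If $E_1$ is the only curve over $P$, then $c_1=\frac1p$ gives $(E_1)^2_k=-p[k(b):k]$, and adjunction gives $\deg_kK_{E_1}=[k(b):k]$. The left side is even, while the right side is a power of $p$ once $\ell$ has been made separably closed, a reduction you also omit.
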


\begin{proof}
  By the properties in Subsection~\ref{section-field-extension}, $S_{\ell^{\rm sep}}$ is regular, and we only need to prove the statements for $S_{\ell^{\rm sep}}$.
  So we may assume that $\ell$ is separably closed.
  By assumption, $S \times_{\ell} k$ is not reduced.
  Set $X = (S \times_{\ell} k)_{\mathrm{red}}^\nu$ and denote by $\pi\colon X\to S$ the induced morphism.
  By Theorem~\ref{theorem-can-bc}, we have
  \begin{equation}\label{equation-cbf-1}
    0 \equiv \pi^*K_S \sim K_X + (p-1) (\mathfrak{M} + \mathfrak{F}),
  \end{equation}
  where $\mathfrak{M}\ne0$ is movable and $\mathfrak{F}$ is effective.
  Let $\sigma\colon \widetilde{X} \to X$ be a minimal resolution. We have
  \[
    \sigma^* K_X = K_{\widetilde{X}} + \sum_i a_i E_i
  , 
  \]
  where $a_i\ge0$ and $E_i$'s are $\sigma$-exceptional curves.
  Then $\kappa(\widetilde{X}) <0$, and therefore a $K_{\widetilde X}$-MMP yields a birational contraction $\epsilon\colon \widetilde X\to Z$, obtained by contracting $(-1)$-curves, onto a Mori fiber space $g\colon Z\to B$:
    \begin{equation}\label{equation-base-change-diagram}
      \begin{tikzcd}
        & & &\widetilde{X}\ar[ld, "\sigma"'] \ar[rd, "\epsilon"] \ar[rdd, "f"'] &\\
      &S \ar[d] & X = (S \times_{\ell} k)^\nu_{\rm red} \ar[l, "\pi"'] \ar[d] & &Z\ar[d, "g"]\\
      &\Spec \ell &\Spec k\ar[l] & &B \rlap{\,.}
    \end{tikzcd}
  \end{equation}  
  Recall that, for a variety $Y$ over $k$, the field $k_Y$ denotes the algebraic closure of $k$ in $K(Y)$.
  In the present situation, we have $k_X = k_{\widetilde{X}} = k_B = k$.

  We shall take advantage of the following two conditions:
  \begin{itemize}
    \item[(a)] Since $S$ is regular and $X \to S$ is of height one, the Cartier index of any Weil divisor on $X$ divides~$p$.
      In particular, for every irreducible divisor $C$ on $X$, if we write $\sigma^*C = \widetilde{C} + \sum_i e_i E_i$, where $\widetilde{C}$ denotes the strict transform of $C$, then $e_i \in \frac1p \mathbb{Z}_{\geq 0}$; and for any two distinct irreducible divisors $C_1$ and $C_2$, if $C_1 \cdot_{k} C_2 >0$, then $C_1 \cdot_{k_{C_1}} C_2 \geq \frac1p$ (which is equivalent to $C_1 \cdot_{k} C_2 \geq \frac1p[k_{C_1}:k]$).
    \item[(b)] We write $\sigma^*(K_X) = K_{\widetilde{X}} + \sum_i a_i E_i$, 
      and $\sigma^* (\mathfrak{M} + \mathfrak{F}) = \widetilde{\mathfrak{M} } + \widetilde{\mathfrak{F}} + \sum_i b_i E_i$, where $\widetilde{\mathfrak{M} } = \sigma^{-1}_*\mathfrak{M}$, $\widetilde{\mathfrak{F} } = \sigma^{-1}_*\mathfrak{F}$, and $a_i, b_i\in \frac1p \mathbb{Z}_{\geq 0}$.
      Then
    \begin{equation}\label{equation-point-b}
      K_{\widetilde{X}} + (p-1) (\widetilde{\mathfrak{M} } + \widetilde{\mathfrak{F}}) + \sum_i m_i E_i \equiv 0
    \end{equation}
      where $m_i := a_i + (p-1)b_i$.
      Since $K_S$ is Cartier, $m_i \in \mathbb{Z}_{\geq 0}$.
\end{itemize}

\begin{lemma}\label{lemma-dim-base-one}
  The Mori fiber space $Z\to B$ is a fibration onto a curve $B$.
\end{lemma}
\begin{proof}
  Suppose to the contrary that $Z$ is a del Pezzo surface with $\rho(Z) = 1$.
  Pushing down (\ref{equation-cbf-1}), we have \[
    K_Z + \epsilon_*\bigl((p-1)\sigma^* (\mathfrak{M} + \mathfrak{F}) + \sum_i a_i E_i\bigr) \equiv 0.
  \]
  Since $\mathfrak{M}$ is movable, the divisor $\epsilon_*(\sigma^* \mathfrak{M}) >0$ is nonzero and effective, and thus ample.
  Consequently,
  $$(-K_Z)_k^2 \geq \bigl( \epsilon_*((p-1)\sigma^* \mathfrak{M}) \bigr)_k^2 \geq (p-1)^2 \geq 16,$$
  which contradicts that $(K_{Z})_k ^2 \le 9$ (see Theorem~\ref{theorem-bd-del-pezzo}).
\end{proof}

\begin{lemma}\label{lemma-E1-horizontal}
    Each component of $\widetilde{\mathfrak{M} } + \widetilde{\mathfrak{F}}$ is contained in a fiber of $f$, and the divisor $\sum_i m_i E_i$ dominates~$B$.
\end{lemma}
\begin{proof}
  Let $\eta\in B$ be the generic point.
  By the condition (b), we have 
  $$K_{\widetilde{X}_{\eta}} + \bigl((p-1) (\widetilde{\mathfrak{M} } + \widetilde{\mathfrak{F}}) + \sum_i m_i E_i\bigr)\big|_{\widetilde{X}_{\eta}} \equiv 0.$$
  Since $\deg_{k(\eta)} K_{\widetilde{X}_{\eta}} = -2$ and $p\geq 5$, we conclude that $ (\widetilde{\mathfrak{M} } + \widetilde{\mathfrak{F}})|_{\widetilde{X}_{\eta}} =0$; that is, every component of $\widetilde{\mathfrak{M} } + \widetilde{\mathfrak{F}}$ is vertical, and it follows that $\sum_i m_i E_i$ dominates $B$.
\end{proof}

For a closed point $b\in B$, denote by $F_b=f^*b$ the fiber of $f$ over $b$, by $k(b)$ the residue field of $b$, and set $\overline{F}_b=\sigma_*F_b$.
A general fiber of $f$ is integral and normal, with $H^0(F_b, \mathcal{O}_{F_b}) = k(b)$.
Since $\mathfrak{M}$ is movable, the strict transform of a general member $M_0\in |\mathfrak{M}|$ contains a fiber $F_b$. Therefore,
\begin{itemize}
  \item[\rm(c)] there exist a reduced and normal fiber $F_b$ and an effective Weil divisor $\overline{V}$ such that $\mathfrak{M} + \mathfrak{F} = \overline{F}_b + \overline{V}$.
\end{itemize}
We fix such a fiber $F_b$ and the divisor $\overline{V}$, and denote by $V$ the strict transform of $\overline{V}$.
Let $E_1$ be a $\sigma$-exceptional curve that dominates $B$; it exists by Lemma~\ref{lemma-E1-horizontal} and we have $m_1>0$.

\begin{lemma}\label{lemma-on-E1}
  Among the $\sigma$-exceptional curves with $m_i>0$, $E_1$ is the unique one dominating $B$.
  Moreover, $F_b\cdot_{k(b)}E_1=1$, $m_1=2$, $F_b\cdot E_i=0$ for every $i\ne1$ with $m_i>0$, and $F_b\cdot V=0$.
  Consequently $k_{E_1}=k$, $E_1$ is a section of $f$ (hence is regular), and $V$ and every $E_i$ with $i\ne1$ and $m_i>0$ are vertical.
\end{lemma}
\begin{proof}
  By restricting the relation $K_{\widetilde{X}} + (p-1) (F_b + V) + \sum_i m_i E_i \equiv 0$ to $F_b$ and applying the adjunction formula, we obtain
  \[
     K_{F_b} + \Bigl((p-2)F_b+(p-1) V + m_1E_1+ \sum_{i\geq 2} m_i E_i\Bigr)\big|_{F_b}\equiv 0.
  \]
  By taking the degree and using the facts that $F_b\cdot_{k(b)} F_b = 0$ and $\deg_{k(b)} K_{F_b}=-2$, we obtain
  \begin{equation}\label{equation-1}
    m_1E_1 \cdot_{k(b)} F_b + (p-1)V\cdot_{k(b)} F_b + \sum_{i\geq 2} m_i E_i\cdot_{k(b)} F_b = 2 .
  \end{equation}
  By restricting the same relation to $E_1$ we obtain
  \[
     K_{E_1^{\nu}} + \Delta_{E_1^{\nu}} + \Bigl((m_1-1)E_1+ (p-1)F_b+(p-1) V + \sum_{i\geq 2} m_i E_i\Bigr)\big|_{E_1}\equiv 0,
  \]
  where $\Delta_{E_1^\nu}\ge0$ is the boundary on $E_1$ obtained from the adjunction formula.
  Taking degree gives
  \begin{equation}\label{equation-2}
        (m_1-1)(E_1)^2_{k_{E_1}} + (p-1)F_b\cdot_{k_{E_1}} E_1 +
        (p-1) V\cdot_{k_{E_1}} E_1 + \sum_{i\geq 2} m_i E_i \cdot_{k_{E_1}} E_1 \le 2.
  \end{equation}
  Since $(p-1)F_b\cdot_{k_{E_1}} E_1 \geq 4$, we conclude from (\ref{equation-2}) that $m_1 \geq 2$.
  Combining this with (\ref{equation-1}), we have
  \begin{align}\label{equation-3}
    m_1 =2, \quad
    E_1 \cdot_{k(b)} F_b=1, \quad\text{and}\quad
    V\cdot F_b = \sum_{i\geq 2} m_i E_i\cdot_{k(b)} F_b=0.
  \end{align}
  Moreover, $E_1$ is the unique $\sigma$-exceptional curve horizontal over $B$ since $K_{\widetilde X} \cdot_{k(b)} F_b = -2$.
  Every $E_i$ (for $i\ge2$ and $m_i>0$) is therefore vertical, hence disjoint from the general fiber $F_b$, which gives $F_b\cdot E_i=0$. 
  Finally, since $E_1 \cdot_{k(b)} F_b=1$, the curve $E_1$ is a section of $f\colon\widetilde{X} \to B$; in particular $k_{E_1}=k$ and $E_1$ is regular.
\end{proof}

\smallskip
Let $P = \sigma(E_1) \in X$ and denote by $E_1, \dots, E_r$ the exceptional curves centered at $P$.
Write $\sigma^*\overline F_b = F_b + \sum_{i} c_i E_i$ and $\sigma^*\overline V = V + \sum_i d_i E_i$, where $c_i, d_i \in \frac1p\mathbb{Z}_{\geq 0}$ by (a).
Then $m_i = a_i + (p-1) (c_i + d_i)$, with $a_i$ as in (b).

\begin{lemma}\label{lemma-ci-positive}%
  \begin{enumerate}[\rm(1)]
    \item $c_i>0$ for all $1\le i\le r$; and
    \item $a_1 = 2/p$ or $(p+1)/p$.
  \end{enumerate}
\end{lemma}
\begin{proof}
  (1) 
  Intersecting $\overline F_b$ with $E_j$ gives
  \begin{equation}\label{equation-pullback-cut}
    0 = \sigma^*\overline F_b \cdot_k E_j = F_b \cdot_k E_j + \sum_{i=1}^r c_i\,(E_i\cdot_k E_j),
    \qquad 1\le j\le r.
  \end{equation}
  For $j=1$, since $F_b \cdot_k E_1 > 0$ and $(E_1)^2_k<0$, we must have $c_1 > 0$.
  Suppose $T:=\{\,i:c_i=0\,\}\neq\varnothing$. Since $\bigcup_i E_i$ is connected and $1\notin T$, there exist $j_0\in T$ and $i_0\notin T$ with $E_{i_0}\cdot_k E_{j_0}>0$.
  Evaluating (\ref{equation-pullback-cut}) at $j_0$ then forces $c_{i_0}(E_{i_0}\cdot_k E_{j_0})\le0$, a contradiction. Hence $c_i>0$ for all $i$.

  (2) Since $2 = m_1 = a_1 + (p-1)(c_1 + d_1)$, we see that $(c_1 + d_1) = 1/p$ or $2/p$, and the statement follows.
\end{proof}

\begin{lemma}\label{lemma-E1-numerics}
  If $a_1 = 2/p$, then we have $p=5$, $k(b)=k_{E_1}=k$, $p_a(E_1)=0$, $\deg_k K_{E_1}=-2$, $F_b\cdot_k E_1 = 1$, $V\cdot_k E_1 =0$,
  and $(E_1)^2_k \in \{-2,-3\}$.
\end{lemma}
\begin{proof}
By restricting the relation $\sigma^*(K_X) = K_{\widetilde{X}} + a_1E_1 + \sum_{i\geq 2} a_i E_i$ to $E_1$ and applying the adjunction formula, we obtain
$$K_{E_1} + \bigl((a_1-1)E_1 + \sum_{i\geq 2} a_i E_i\bigr)\big|_{E_1} \equiv 0.$$
Taking the degree gives
\begin{equation}\label{equation-4}
  \deg_{k} (K_{E_1}) + (a_1-1)(E_1)^2_{k} + \sum_{i\geq 2} a_i E_i\cdot_{k} E_1 = 0.
\end{equation}
We deduce that 
$$(E_1)^2_k = \frac{p}{p-2}\bigl(\deg_k K_{E_1} + \sum_{i\geq 2} a_i E_i\cdot_k E_1\bigr) \geq \frac{p}{p-2}(\deg_k K_{E_1}) \geq \frac{-2p}{p-2}.$$
Since $(E_1)^2<0$, we have $\deg_k K_{E_1} <0$, hence $p_a(E_1) = 0$ and $\deg_k K_{E_1}=-2$.
Moreover, since $\sigma$ is a minimal resolution, $(E_1)^2_k \le -2$, hence
\begin{equation}\label{equation-E1square}
     (E_1)^2_k = 
     \begin{cases}
        -3 \text{ or } {-2} & \text{if } p = 5,\\
        -2 & \text{if } p \ge 7.
     \end{cases}
\end{equation}
Furthermore, by Lemma~\ref{lemma-on-E1}, the inequality (\ref{equation-2}) can be rewritten as
  \begin{equation}\label{equation-2-prime}
    (E_1)^2_k + (p-1)F_b\cdot_k E_1 +(p-1) V\cdot_k E_1 + \sum_{i\geq 2} m_i E_i\cdot_k E_1 \leq 2.
  \end{equation}
  Combining~(\ref{equation-2-prime}) with (\ref{equation-E1square}) and $F_b\cdot_k E_1 \ge 1$, we conclude that $p=5$, $F_b\cdot_k E_1 = 1$, and $V\cdot_k E_1 =0$.
Finally, $F_b \cdot_k E_1 = 1$ implies $k(b) = k$.
\end{proof}

\begin{lemma}\label{lemma-classification} 
  \begin{enumerate}[\rm(1)]
    \item $\overline{V}\cdot \overline{F}_b =0$;
    \item $\overline{F}_b\cdot_{k(b)} \overline{F}_b = 1/p $ or $2/p$; and
    \item $V\cdot_k E_i = 0$ for $i=1,\ldots,r$ (in particular, $d_i = 0$ for $i=1,\ldots,r$).
  \end{enumerate}
\end{lemma}
\begin{proof}
  As before, write $\sigma^*\overline{F}_b = F_b + c_1E_1 + \sum_{i\geq 2} c_i E_i$ and $\sigma^*\overline{V} = V+ d_1E_1 + \sum_{i\geq 2} d_i E_i$.
  Then
  \begin{equation}\label{equation-4-prime}
  \begin{aligned}
    \overline{F}_b\cdot_{k(b)} \overline{V} &= F_b\cdot_{k(b)} \sigma^*\overline{V}=
      d_1, \quad\text{and}\\
      (\overline{F}_b)^2_{k(b)} &= \sigma^*\overline{F}_b\cdot_{k(b)} F_b = c_1 .
  \end{aligned}
  \end{equation}

  (1) Assume $\overline{V}\cdot \overline{F}_b = d_1 > 0$ for a contradiction. 
  Since $c_1>0$ (Lemma~\ref{lemma-ci-positive}\,(1)) and $2 = m_1 = a_1 + (p-1)(c_1 + d_1)$, we have $c_1=d_1=1/p$ and $a_1 = 2/p$.
  By Lemma~\ref{lemma-E1-numerics}, $p=5$, $F_b\cdot_k E_1=1$ and $(E_1)^2_k\in \{-2,-3\}$.
  Intersecting $\sigma^*\overline F_b$ with $E_1$ now gives
  \[
    0=\sigma^*\overline F_b\cdot_k E_1 = F_b\cdot_k E_1 + c_1(E_1)^2_k + \sum_{i\geq 2}
    c_i\,E_i\cdot_k E_1 \geq 1 - \tfrac{3}{5} > 0,
  \]
  a contradiction. Therefore $\overline{V}\cdot \overline{F}_b = 0$.

  \medskip
  (2) By restricting $K_{X} + (p-1) (\overline{F}_b + \overline{V}) \equiv 0$ to the normalization $F_b$ of $\overline{F}_b$ and applying the adjunction formula, we obtain
$$\bigl(K_{X} + (p-1) (\overline{F}_b + \overline{V}) \bigr)|_{F_b} \sim_{\mathbb{Q}} K_{F_b} + \Delta_{F_b} + \bigl((p-2) \overline{F}_b + (p-1) \overline{V} \bigr)|_{F_b} \equiv 0,$$
where $\Delta_{F_b} \ge 0$.
Taking the degree and using $\overline V\cdot \overline F_b=0$ gives
$$\deg_{k(b)} (K_{F_b} + \Delta_{F_b}) + (p-2) (\overline{F}_b)^2_{k(b)} =0.$$
Then we have
\begin{equation}\label{equation-0}
  (p-2) (\overline{F}_b)^2_{k(b)} + \deg_{k(b)}(\Delta_{F_b}) = -\deg_{k(b)} K_{F_b} = 2.
\end{equation}
From the conditions $p\geq 5$ and $(\overline{F}_b)^2_{k(b)} \in 1/p\mathbb{Z}_{>0}$, we conclude that 
\[
  (\overline{F}_b)^2_{k(b)} =
  \begin{cases} 
    \frac{1}{p} \text{ or }\frac{2}{p} & \text{ if } p\ge7,\\[3pt]
    \frac{1}{5}, \frac{2}{5}, \text{ or }\frac{3}{5} &\text{ if }p = 5.
  \end{cases}
\]
If $p=5$ and $(\overline{F}_b)^2_{k(b)} =3/5$, then $c_1 = 3/5$ by (\ref{equation-4-prime}), contradicting $2 = a_1 + (p-1) (c_1 + d_1)$.
So we obtain $(\overline{F}_b)^2_{k(b)} =1/p$ or $2/p$.

\medskip
(3) Since $\overline{V}\cdot_k\overline{F}_b = 0$ and $F_b\cdot_k V = 0$, we have
\[
  0 = \overline F_b\cdot_k\overline V = \sigma^*\overline F_b\cdot_k V = F_b\cdot_k V + \sum_i c_i\,(E_i\cdot_k V) = \sum_i c_i\,(E_i\cdot_k V).
\]
Each term is nonnegative and $c_i>0$ by Lemma~\ref{lemma-ci-positive}\,(1), so $V\cdot_k E_i = 0$ for $i=1,\ldots,r$.
Since $\sigma^*\overline{V} = V+ \sum_{i\geq 1} d_i E_i$, for each $j=1,\ldots,r$ we have $0 = \sum_{i=1}^r d_i E_i \cdot_k E_j$, whence $d_i = 0$ by the negative definiteness of the intersection matrix of the $E_i$.
\end{proof}

For convenience, we write
\begin{equation}\label{equation-disc-can}
  K_{\widetilde X}+\mathbf D\equiv0,\qquad
  \mathbf D = (p-1)(F_b+V)+E'+E'',\qquad
  E'=\sum_{i=1}^r m_iE_i,
\end{equation}
where $E'$ is the exceptional divisor centered at $P$, and $E''$ is the sum of the exceptional divisors centered away from $P$.

\begin{lemma}\label{lemma-D-is-connected}%
  \begin{enumerate}[\rm(1)]
    \item $\Supp\mathbf D$ is connected.
    \item $V=0$ and $E'' = 0$, in particular, $P$ is the only non-canonical singularity on $X$.
  \end{enumerate}
\end{lemma}
\begin{proof}
  (1) The contraction $\epsilon\colon\widetilde X\to Z$ from running the $K_{\widetilde X}$-MMP is a composite of $(-1)$-curve contractions, so by Lemma~\ref{lemma-crepant-connected} it preserves the number of connected components of $\Supp\mathbf D$.
  Since $E_1$ is the only horizontal component of $\mathbf D$, $\epsilon_*\mathbf D$ is the horizontal curve $\epsilon_*E_1$ together with vertical components; since the fibers of $Z\to B$ are irreducible, each such vertical component is a whole fiber meeting $\epsilon_*E_1$. Hence $\Supp(\epsilon_*\mathbf D)$, and therefore $\Supp(\mathbf D)$, is connected.

  (2) By Lemma~\ref{lemma-classification}\,(3), $V$ is disjoint from $E'$; moreover $V$ contains no component of the integral fiber $F_b$ (otherwise $\sigma^*\overline V\supseteq\sigma^*\overline F_b$ would give $d_1\ge c_1>0$), so $V$, being vertical, is disjoint from $F_b$.
  The curves of $E''$ lie in fibers different from $F_b$ and, being exceptional over points different from $P$, are disjoint from $E'$.
  Hence $V\cup E''$ does not meet $F_b\cup E'$; so (1) forces $V = 0$ and $E'' = 0$.
\end{proof}

By Lemma~\ref{lemma-classification}\,(3), we have $d_1=0$, so $(\overline F_b)^2_{k(b)}=c_1$ and $2 = m_1 =a_1+(p-1)c_1$; hence we fall into one of the following two cases:
\begin{itemize}
  \item Case\,(1) $(\overline F_b)^2_{k(b)}=2/p\iff a_1=2/p$;
  \item Case\,(2) $(\overline F_b)^2_{k(b)}=1/p\iff a_1=(p+1)/p$.
\end{itemize}

\begin{proposition}\label{proposition-2/p}
  In Case\,{\rm(1)}, we have $p=5$, and $X$ has a unique non-canonical singular point $P$, which is a rational singularity of multiplicity $3$ whose minimal resolution has exceptional locus $E = E_1 + E_2$ with $(E_1)^2_{k} = -3$ and $(E_2)^2_{k} = -2$.
\end{proposition}
\begin{proof}
  In this case $a_1 = c_1 = 2/p$.
  From Lemma~\ref{lemma-E1-numerics} we have $p=5$, $p_a(E_1)=0$, $F_b\cdot_k E_1 = 1$, $(E_1)^2_k = -2$ or $-3$.
  Intersecting $\sigma^* \overline{F}_b = F_b + \sum_{i=1}^r c_i E_i$ with $E_1$, we obtain \[
    F_b\cdot_k E_1 + \frac{2}{5}(E_1)^2_k + \sum_{i\geq 2} c_i E_i\cdot_k E_1 =0,
  \]
  and thus \[
    -\frac{2}{5}(E_1)^2_k = 1 +\sum_{i\geq 2} c_i E_i\cdot_k E_1 \ge 1.
  \]
  Therefore, $(E_1)^2_k = -3$ and thus $\sum_{i\geq 2} c_i E_i\cdot_k E_1 = 1/p$.
  Since $c_i\in \frac1p \mathbb Z_{\ge0}$, there exists exactly one $E_i$, say $E_2$, such that $E_2 \cdot_k E_1 > 0$ and $c_2 >0$,
  and more precisely $E_2 \cdot_k E_1 = 1$ and $c_2 = 1/p$.
  Here $E_2\cdot_k E_1 =1$ implies $k_{E_2}= k_{E_1} = k$.

Since $m_2 \geq (p-1)c_2 \ge (p-1)/p$ and $m_2\in \mathbb{Z}$, inequality (\ref{equation-2-prime}) then implies $m_2=1$ and $\sum_{i\geq 3} m_i E_i\cdot_k E_1 =0$.
Restricting the same relation to $E_2$ and using $F_b\cdot_k E_2 = V\cdot_k E_2 = 0$ and $E_1\cdot_k E_2 = 1$, we obtain
\begin{equation}\label{equation-E2}
  \deg_k(K_{E_2^\nu}) + \deg_k(\Delta_{E_2^\nu}) + 2 + \Bigl(\sum_{i\geq 3} m_i E_i\Bigr)\cdot_k E_2 = 0,
\end{equation}
where $\Delta_{E_2^\nu}\ge0$ is the boundary obtained from the adjunction formula along $E_2$.
Since $\deg_k(K_{E_2^\nu}) \ge -2$, it follows that $p_a(E_2^\nu) = 0$, $\Delta_{E_2^\nu} = 0$, and $\sum_{i\geq 3} m_i E_i \cdot_k E_2 = 0$; hence $E_2$ is regular.
Using $\sum_{i\geq 3} m_i E_i \cdot_k E_2 = 0$ and $m_i \ge (p-1)c_i >0$, we see that $r=2$ and the exceptional divisor centered at $P$ is $E'=2E_1+E_2$.
Moreover, by the equation
\[
  0 = E_2 \cdot_k \sigma^* \overline{F}_b =E_2 \cdot_k (F_b+ c_1 E_1 + c_2 E_2) = \frac{2}{p} E_1\cdot_k E_2 + \frac{1}{p} (E_2)^2_k,
\]
we deduce $(E_2)^2_k = -2$.
Finally, since $E_1$ and $E_2$ are of arithmetic genus zero with $E_1 \cdot_k E_2 = 1$, $P$ is a rational singularity; it is then straightforward to verify that it has multiplicity $3$.
\end{proof}

We are left to deal with Case~(2).
\begin{lemma}\label{lemma-E2}
  In Case\,{\rm(2)}, there is some $\sigma$-exceptional curve $E_2$ meeting $E_1$.
\end{lemma}
\begin{proof}
  Assume $E_1$ is the only irreducible component of $\sigma^{-1}(P)$.
  Then \[
    0 = \sigma^*\overline F_b\cdot E_1 = F_b\cdot_{k} E_1+c_1(E_1)^2_{k}=0, 
  \]
  where $F_b \cdot_k E_1 = [k(b):k]$.
  Since $c_1=1/p$, we have $(E_1)^2_{k}=-p [k(b):k]$.
  Restricting $K_{\widetilde X} + (p-1) F_b + 2 E_1 + \sum_{i=2}^r m_i E_i$ to the regular curve $E_1$ gives
\[
  \deg_{k} K_{E_1}+(E_1)^2_{k} + (p-1)[k(b):k]=0.
\]
  Then $[k(b):k] = \deg_{k} K_{E_1}$.
Since $k$ is assumed to be separably closed, $[k(b):k]$ is a power of $p$; in particular, $[k(b):k]$ is odd.
This contradicts the fact that $\deg_k K_{E_1}$ is even.
\end{proof}

\begin{lemma}\label{lemma-ob}
  Assume $(\overline{F}_b)^2_{k(b)}= 1/p$.
  Then for every closed point $t\in B$, the fiber $F_t$ has at most one non-$\sigma$-exceptional component $T$ with $\overline T\cdot_k\overline F_b>0$, and any such $T$ has $\operatorname{mult}_{F_t}T=1$.
\end{lemma}
\begin{proof}
  From $\overline F_t\equiv\tfrac{[k(t):k]}{[k(b):k]}\overline F_b$ and $(\overline F_b)^2_{k(b)}=1/p$ we get
  \[
    \overline F_b\cdot_k\overline F_t=\tfrac1p[k(t):k].
  \]
  Write $\overline F_t=\sum_j e_j\overline T_j$, where $T_j$ runs over the non-$\sigma$-exceptional components $T_j$ of $F_t$, with $e_j=\operatorname{mult}_{F_t}T_j\ge1$.
  If $\overline T_j\cdot_k\overline F_b>0$, then condition~{\rm(a)} gives
  \[
    \overline T_j\cdot_k\overline F_b\ge\tfrac1p[k_{T_j}:k]\ge\tfrac1p[k(t):k].
  \]
  Since $\tfrac1p[k(t):k]=\overline F_b\cdot_k\overline F_t=\sum_j e_j\,(\overline T_j\cdot_k\overline F_b)$ is a sum of nonnegative terms with $e_j\ge1$, at most one such $T_j$ occurs, and for it $e_j=1$.
\end{proof}

\begin{proposition}\label{proposition-Case2}%
  Case\,{\rm(2)} does not occur.
\end{proposition}
\begin{proof}
  Suppose to the contrary that Case\,(2) holds, that is, $(\overline F_b)^2_{k(b)}=c_1 =1/p$.
  By Lemma~\ref{lemma-D-is-connected} we have 
  \begin{equation}\label{equation-disc-can-prop}
    K_{\widetilde X} + (p-1)F_b + E'\equiv0,
    \qquad E'=\sum_{i=1}^r m_iE_i .
  \end{equation}
  By Lemma~\ref{lemma-E2}, we have $r \ge 2$.
  Let $F_{t_0}$ be the fiber of $f$ containing $E_2$. 
  Then it is reducible and contains at least one $(-1)$ curve.
  Let $T$ be a $(-1)$-curve in $F_{t_0}$; since $\sigma$ is a minimal resolution, $T$ is different from every $E_i$.
  Intersecting~(\ref{equation-disc-can-prop}) with $T$ therefore gives
  \[
    E'\cdot_{k_T} T= - K_{\widetilde X} \cdot_{k_T} T = 1 .
  \]
  Thus $T$ meets exactly one component $E_{i_0}$ of the exceptional divisor centered at $P$, with $E_{i_0}\cdot_{k_T} T=1$.
  Hence \emph{every} $(-1)$-curve $T\subseteq F_{t_0}$ satisfies $\sigma_*T\ne0$ and $\overline T\cdot_k\overline F_b>0$.
  By Lemma~\ref{lemma-ob}, this implies that $F_{t_0}$ has a unique $(-1)$-curve, of multiplicity one. 
  Lemma~\ref{lemma-reducible-fiber} then gives $[k_T:k(t_0)]>1$.
  On the other hand, the proof of Lemma~\ref{lemma-ob} gives
  \[
    \tfrac1p[k(t_0):k]\ge \overline T\cdot_k\overline F_b
    \ge\tfrac1p[k_T:k]\ge\tfrac1p[k(t_0):k].
  \]
  Thus $[k_T:k]=[k(t_0):k]$, and hence $k_T=k(t_0)$, a contradiction.
\end{proof}

  We are in either Case\,(1) or Case\,(2).
  Proposition~\ref{proposition-Case2} excludes Case\,(2), so we are in Case\,(1), and Proposition~\ref{proposition-2/p} concludes the proof of Theorem~\ref{theorem-geo-int-K-trivial}.
\end{proof}

\section{Generalization and Application}\label{section-application}
\begin{theorem}\label{theorem-geo-int-log-pair}
  Let $k$ be a field of characteristic $p\ge 5$.
  Let $S$ be a projective normal surface over $k$ such that $H^0(S,\mathcal{O}_S) = k$.
  Let $\Delta$ be an effective $\mathbb Q$-divisor on $S$ and assume that
  \begin{itemize}
    \item $-(K_S + \Delta)$ is nef, and
    \item the pair $(S,\Delta)$ is log canonical.
  \end{itemize}
  Then the following hold.
  \begin{enumerate}[\rm(1)]
    \item If $p\ge7$, then $S$ is geometrically reduced.
    \item If $p=5$ and $S$ is geometrically non-reduced, then $\Delta=0$, $K_S\equiv 0$, the surface $S$ has at most canonical singularities, and $X:=(\widetilde S\otimes_{k} k^{1/p})_{\rm red}^{\nu}$, where $\widetilde S\to S$ denotes the minimal resolution, has a unique non-canonical singular point $P$,
      which is a rational singularity of multiplicity $3$, whose minimal resolution has exceptional locus $E_1 \cup E_2$ with $(E_1 )^2_{k^{1/p}} = -3$ and $(E_2 )^2_{k^{1/p}} = -2$.
  \end{enumerate}
\end{theorem}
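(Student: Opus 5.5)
We reduce to the regular case treated above and then split according to the outcome of a $K$-MMP. Let $\mu\colon\widetilde S\to S$ be the minimal resolution and write $K_{\widetilde S}+\widetilde\Delta=\mu^*(K_S+\Delta)$. Since $(S,\Delta)$ is log canonical and $\mu$ is minimal, $\widetilde\Delta$ is effective with coefficients $\le1$ on every component, including the strict transform of $\Delta$. Then $-(K_{\widetilde S}+\widetilde\Delta)$ is nef, and $(\widetilde S,\widetilde\Delta)$ is again log canonical. By \S\ref{section-field-extension}(4), geometric reducedness of $S$ and $\widetilde S$ are equivalent, and $H^0(\widetilde S,\mathcal O)=k$. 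So we may assume $S$ is regular, keeping track of the fact that $\Delta=0$ on $\widetilde S$ forces $\Delta=0$ on $S$ together with $K_{\widetilde S}=\mu^*K_S$, i.e.\ canonical singularities. Since $-K_S\equiv\Delta-(K_S+\Delta)$ is pseudo-effective, Theorem~\ref{theorem-mmp-surf} gives two cases: either $K_S$ is not pseudo-effective, or $K_S\equiv0$ (in the latter case $\Delta$ is pseudo-effective and $\equiv -(-(K_S+\Delta))$, so $\Delta\equiv0$, hence $\Delta=0$, and $K_S+\Delta\equiv0$).

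\emph{Case $K_S\equiv0$, $\Delta=0$.} Apply Theorem~\ref{theorem-geo-int-K-trivial} with $\ell=k$. If $S$ is geometrically non-reduced, then $p=5$ and we obtain exactly the description of $X$ claimed in (2). For (2), translating back to the original normal $S$ gives $\Delta=0$, $K_S\equiv0$, and canonical singularities by the remark above.

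\emph{Case $K_S$ not pseudo-effective.} Run a $K_S$-MMP $\phi\colon S\to Z$ ending in a Mori fiber space $Z\to B$. The pushforward $\Delta_Z=\phi_*\Delta$ still has $-(K_Z+\Delta_Z)$ nef: this holds because $\phi^*(K_Z+\Delta_Z)=K_S+\Delta+(\text{effective exceptional})$, and nefness is preserved since the MMP steps are $(K+\Delta)$-nonpositive. We have to check this carefully; alternatively we can avoid it by working with $\phi_*$ and pairing against movable curves. The coefficients of $\Delta_Z$ remain $\le1$, and geometric reducedness is birational. If $\dim B=0$, $Z$ is a regular del Pezzo surface, geometrically normal by Theorem~\ref{theorem-PW22-1.5} since $p\ge5$, hence geometrically reduced. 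If $\dim B=1$, Proposition~\ref{proposition-BT22-2.18} says all fibers are integral conics; after a separable base change to $k^{s}$ (allowed by \S\ref{section-field-extension}(5)), $g$ becomes smooth since $p\ge3$. Proposition~\ref{proposition-MF-geo-normal} then applies, because its hypothesis (2) holds (coefficients $\le1$), and $Z$ is geometrically regular. So in this case $S$ is always geometrically reduced, and (2) gives nothing new here.

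The main obstacle is verifying that nefness of $-(K+\Delta)$ survives the MMP to $Z$ (with $\Delta_Z$ the pushforward) and that base change to $k^{s}$ keeps $H^0=k^s$ and all hypotheses intact. For the first, I would argue curve by curve: for an irreducible curve $C\subset Z$ not in the image of exceptional loci, its strict transform $C'$ satisfies $(K_Z+\Delta_Z)\cdot C=\phi^*(K_Z+\Delta_Z)\cdot C'$. Writing $\phi^*(K_Z+\Delta_Z)=K_S+\Delta-\sum a_jG_j$ with $a_j\ge0$, we need the sign of $a_j$ to be favorable. Since $K_S+\Delta$ is anti-nef, the negativity lemma gives $\phi^*\phi_*(K_S+\Delta)\le K_S+\Delta$ up to sign conventions, yielding $(K_Z+\Delta_Z)\cdot C\le (K_S+\Delta)\cdot C'\le0$. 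This is the step where the sign needs care.
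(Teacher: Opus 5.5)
Most of your proposal follows the paper's proof, but the case $\dim B=1$ has a genuine gap because you pass to $k^{s}$ after the MMP instead of before it. The parts that match the paper are the minimal resolution, the dichotomy ($K\equiv0$ versus $K$ not pseudo-effective), Theorem~\ref{theorem-PW22-1.5} for $\dim B=0$, and Theorem~\ref{theorem-geo-int-K-trivial} plus $\widetilde\Delta=0$ in the $K$-trivial case. The nefness step you worry about is fine. The sign in your final paragraph is the correct one; the ``$+$'' in your first paragraph is wrong. More directly, $-(K_Z+\Delta_Z)=\phi_*D$ with $D:=-(K_S+\Delta)$ nef, and $\phi_*D\cdot C=D\cdot\phi^*C\ge0$ because $\phi^*C$ is effective.

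\textbf{The gap.} Your claim that $g$ becomes smooth after base change to $k^{s}$ is false as stated. Smoothness descends along the faithfully flat base change $B_{k^{s}}\to B$, so $g_{k^{s}}$ is smooth if and only if $g$ already was. Proposition~\ref{proposition-BT22-2.18} gives smoothness only for a \emph{Mori fiber space} over a separably closed field, and $g_{k^{s}}$ need not be one. A fiber that is an integral but geometrically reducible conic (e.g.\ $x^2-dy^2=0$ with $d$ a non-square, $p\neq2$) splits over $k^{s}$, so $\rho(Z_{k^{s}}/B_{k^{s}})$ jumps.

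Example~\ref{example-MF-nonsmooth} is exactly such a non-smooth Mori fiber space over $\mathbb F_p(a,d)$. There $(X,\Delta)$ is klt and $-(K_X+\Delta)$ is nef. Run with $S:=X$ and the trivial MMP, your argument would conclude that $X$ is geometrically regular, which is false. So Proposition~\ref{proposition-MF-geo-normal} cannot be invoked the way you do.

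\textbf{The repair.} Reverse the order, as the paper does: pass to $k^{s}$ at the outset, or right after replacing $S$ by its minimal resolution $(\widetilde S,\widetilde\Delta)$. This base change preserves everything you need:
\begin{itemize}
\item regularity, by \S\ref{section-field-extension}(5);
\item the condition on $H^0$, which becomes $k^{s}$ by flat base change;
\item nefness of $-(K+\widetilde\Delta)$;
\item the coefficient bound, since prime divisors pull back to reduced divisors under a separable extension;
\item geometric reducedness, which is unchanged.
\end{itemize}
Then run the $K$-MMP over $k^{s}$. Any Mori fiber space over a curve that it produces is smooth by Proposition~\ref{proposition-BT22-2.18}, and Proposition~\ref{proposition-MF-geo-normal} applies as you intended.
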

\begin{proof}
  We may assume that $k$ is separably closed.
  Assume that $S$ is geometrically non-reduced.
  Let $\sigma\colon\widetilde S\to S$ be a minimal resolution with exceptional divisors $E_i$, and write
  \[
    \sigma^*K_S = K_{\widetilde S} + \sum a_i E_i, \qquad \sigma^*\Delta = \sigma_*^{-1}\Delta + \sum b_i E_i,
  \]
  where $a_i\ge0$ since $\sigma$ is a minimal resolution, and $b_i\ge0$ since $\Delta$ is effective and $\mathbb Q$-Cartier.
  Set $\widetilde{\Delta} = \sigma_*^{-1}\Delta + \sum (a_i + b_i) E_i$, so that $K_{\widetilde S} + \widetilde{\Delta} = \sigma^*(K_S + \Delta)$ is anti-nef.
  Since $(S,\Delta)$ is log canonical, we have $a_i + b_i \le 1$, hence $(\widetilde S,\widetilde{\Delta})$ is again log canonical.
  Note that $\widetilde S$ is geometrically non-reduced and $H^0(\widetilde S,\mathcal O_{\widetilde S}) = k$.
  In the following, we discuss two cases: (i) $K_{\widetilde S} \not\equiv 0$; (ii) $K_{\widetilde S} \equiv 0$.

  (i) Assume $K_{\widetilde S} \not\equiv 0$.
  Set $D := -(K_{\widetilde S} + \widetilde{\Delta})$, which is nef.
  Then $-K_{\widetilde S} = \widetilde{\Delta} + D$ is pseudo-effective, and thus $K_{\widetilde S}$ cannot be pseudo-effective; otherwise $K_{\widetilde S}\equiv0$, contrary to our assumption.
  Then the $K_{\widetilde S}$-MMP (Theorem~\ref{theorem-mmp-surf}) yields a sequence of $(-1)$-curve contractions $\phi\colon \widetilde S\to Z$, followed by a Mori fiber space $f\colon Z \to B$.
  Set $\Delta_Z:=\phi_*\widetilde{\Delta}$ and $D_Z:=\phi_*D$, so $K_Z+\Delta_Z+D_Z\equiv0$ with $D_Z$ nef and $\Delta_Z$ satisfying the same coefficient bounds as $\widetilde{\Delta}$.
  If $\dim B=0$, then $Z$ is geometrically normal by Theorem~\ref{theorem-PW22-1.5}.
  If $\dim B=1$, then $f$ is smooth by Proposition~\ref{proposition-BT22-2.18}, so Proposition~\ref{proposition-MF-geo-normal} shows that $Z$ is geometrically regular.
  Thus $Z$ is geometrically normal in either case.
  Since $\widetilde S\to Z$ is birational, the surface $\widetilde S$ is geometrically reduced, a contradiction.
  So Case~(i) does not occur.

  (ii) Assume $K_{\widetilde S} \equiv 0$.
  Theorem~\ref{theorem-geo-int-K-trivial} shows that $p=5$ and $X = (\widetilde S\otimes_{k} k^{1/p})_{\rm red}^{\nu}$ satisfies the stated description.
  It remains to prove the assertions on $(S,\Delta)$ in (2).
  Since $-(K_{\widetilde S} + \widetilde{\Delta}) \equiv -\widetilde{\Delta}$ is nef and $\widetilde{\Delta}$ is effective, we have $\widetilde{\Delta} = 0$.
  In particular $\Delta = 0$ and $a_i = 0$ for all $i$, that is, $S$ has canonical singularities; moreover $\sigma^*K_S = K_{\widetilde S} \equiv 0$, so $K_S\equiv0$ by the projection formula.
\end{proof}

\begin{corollary}\label{corollary-fib-codim2}
  Let $k$ be a perfect field of characteristic $p\ge7$.
  Let $X$ be a $\mathbb Q$-factorial projective variety.
  Suppose that there exists an effective $\mathbb Q$-divisor $\Delta$ on $X$ such that $(X,\Delta)$ is strongly $F$-regular and $-(K_X+\Delta)$ is nef whose Cartier index is indivisible by $p$.
   If the Albanese morphism $a_X\colon X\to A_X$ has relative dimension $\leq 2$ over its image, then $a_X$ is a fibration.
\end{corollary}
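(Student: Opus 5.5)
Write $X\xrightarrow{f} Y\xrightarrow{g} A:=a_X(X)$ for the Stein factorization of $a_X$ (after replacing $A_X$ by the image). The plan is to show that the geometric generic fiber of $f$ is integral. Then $g$ is purely inseparable and birational, hence an isomorphism onto the normalization of the image, and this forces $a_X$ to be a fibration.

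We first record what is known from \cite{EP23}. Under strong $F$-regularity of $(X,\Delta)$, nefness of $-(K_X+\Delta)$ and the Cartier-index hypothesis, the Albanese morphism is surjective. Moreover, $g$ is purely inseparable, and $g$ is an isomorphism as soon as $f$ is separable, i.e. as soon as the generic fiber $X_\eta$ over $K:=K(Y)$ is geometrically integral. If the relative dimension is $0$, then $f$ is birational and there is nothing to do. If it is $1$, we invoke \cite[Theorem~1.5]{CWZ26a}. So the main case is relative dimension $2$, where $X_\eta$ is a normal projective surface over the imperfect field $K$ with $H^0(X_\eta,\mathcal O)=K$.

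In that case we restrict the pair to the generic fiber: set $\Delta_\eta:=\Delta|_{X_\eta}$. Nefness restricts, so $-(K_{X_\eta}+\Delta_\eta)$ is nef, using $K_X|_{X_\eta}=K_{X_\eta}$ by generic flatness and smoothness of $Y$ at the generic point. Strong $F$-regularity localizes, so $(X_\eta,\Delta_\eta)$ is strongly $F$-regular, hence klt and in particular log canonical; it is also $\mathbb Q$-factorial, as a localization of a $\mathbb Q$-factorial variety. We then apply Theorem~\ref{theorem-geo-int-log-pair}(1) with $p\ge7$ to get that $X_\eta$ is geometrically reduced. Since $K$ is algebraically closed in $K(X_\eta)$, item (3) of Subsection~\ref{section-field-extension} gives that $X_\eta$ is geometrically integral, and we conclude.

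I expect the main obstacle to be the reduction to \cite{EP23}: one must check that their hypotheses are exactly those stated (strongly $F$-regular, $-(K_X+\Delta)$ nef with Cartier index prime to $p$) and that their conclusion gives both surjectivity and purely inseparable $g$. A subtler point is the relative-dimension-one case. It is handled either by \cite{CWZ26a} or directly: a regular curve $X_\eta$ with $-(K+\Delta_\eta)$ nef and lc pair is a conic or genus-one curve, hence smooth for $p\ge5$.
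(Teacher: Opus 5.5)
Your proposal is correct and follows the paper's proof essentially step for step. The paper also takes the Stein factorization, handles relative dimension one by \cite[Theorem~1.5]{CWZ26a}, and in relative dimension two restricts the strongly $F$-regular (hence klt) pair to the generic fiber, applies Theorem~\ref{theorem-geo-int-log-pair}(1) to get geometric integrality, and concludes with \cite[Theorem~1.3]{EP23}; the only difference is that for maximal Albanese dimension the paper cites \cite[Proposition~3.2]{EP23} or \cite[Proposition~2.9]{CWZ26a} rather than your observation that a birational $f$ is trivially separable.
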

\begin{proof}
  Denote by $X\xrightarrow{h} W\xrightarrow{g} A_X$ the Stein factorization of $a_X$.
  If $\dim X = \dim W$, i.e., $X$ is of maximal Albanese dimension, then we conclude by \cite[Proposition~3.2]{EP23} or \cite[Proposition~2.9]{CWZ26a}.
  If $\dim X - \dim W =1$, since a strongly $F$-regular pair is klt (\cite[Theorem~3.3]{Hara-Watanabe-2002}), we can apply \cite[Theorem~1.5]{CWZ26a} to conclude.
  Finally, assume $\dim X - \dim W =2$.
  The generic fiber $X_\eta$ over $\eta=\Spec K(W)$ is a surface with $H^0(X_\eta,\mathcal O_{X_\eta})=K(W)$, $(X_\eta,\Delta_\eta)$ is strongly $F$-regular (in particular klt), and $-(K_{X_\eta}+\Delta_\eta)$ is nef.
  When $p\ge7$, Theorem~\ref{theorem-geo-int-log-pair} shows that $X_\eta$ is geometrically integral, hence $h\colon X\to W$ is separable.
  Then \cite[Theorem~1.3]{EP23} yields that $a_X$ is a fibration.
\end{proof}

\section{Examples}\label{section-exam}
\subsection{A geometrically non-reduced $K$-trivial surface in characteristic $5$}\label{subsection-exam-K-trivial-5}%
\begin{theorem}\label{theorem-exam-K-trivial-5}
  Let $\ell = \mathbb F_5(s_1,s_2)$ be a field of characteristic $5$ and degree of imperfection $2$.
  Then there exists a projective regular surface $S$ over $\ell$ with $H^0(S,\mathcal O_S)=\ell$ and $K_S\equiv0$ such that $S$ is geometrically non-reduced over $\ell$.
\end{theorem}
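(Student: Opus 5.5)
The plan is to build $S$ by reversing the analysis of Case~(1) in the proof of Theorem~\ref{theorem-geo-int-K-trivial}. We start from a geometrically rational model over $k=\ell^{1/5}$. Then we descend it to $\ell$ via a rank-one foliation, in the spirit of Example~\ref{example-foliation-geo-nonred-curve}. Write $a=s_1^{1/5}$, $b=s_2^{1/5}$, so $k=\ell(a,b)$.

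\textbf{Step 1 (the model over $k$).} We construct a regular rational surface $\widetilde X$ over $k$ with the following data:
\begin{itemize}
\item a conic-bundle (in fact $\mathbb P^1$-bundle after MMP) fibration $f\colon\widetilde X\to B=\mathbb P^1_k$;
\item a section $E_1$ with $(E_1)^2=-3$, meeting a $(-2)$-curve $E_2$ lying in a fiber;
\item a relation
\[
K_{\widetilde X}+4F+2E_1+E_2\equiv 0 .
\]
\end{itemize}
This relation is exactly the one forced by Proposition~\ref{proposition-2/p}. Concretely, we start from a Hirzebruch surface $\mathbb F_3$ with negative section $E_1$. We then blow up points on one fiber (possibly non-rational points, using the imperfection) to create a $(-2)$-curve $E_2$ meeting $E_1$, and adjust until the numerical relation holds. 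Contracting $E_1\cup E_2$ gives the normal surface $X$ with the rational multiplicity-$3$ point $P$. The check $K_X+4\overline F\equiv0$ with $\overline F^2=2/5$ is then routine intersection arithmetic.

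\textbf{Step 2 (the foliation).} We find a $p$-closed $k_0$-derivation $\delta$ on $K(X)$ with $k^{\delta}=\ell$, of the form $\delta=\partial_a-g\,\partial_b+(\text{terms in function-field coordinates})$. It should be chosen so that:
\begin{itemize}
\item $\mathcal F=T_{X/\mathbb F_5}\cap K(X)\delta$ has $c_1(\det\mathcal F)\sim-\overline F$ up to the needed numerics;
\item $\mathcal F$ is a subbundle away from $P$;
\item at $P$ the quotient is regular.
\end{itemize}
Then we set $S:=X/\mathcal F$, a surface over $\ell$. By \eqref{eq:foliation-2} we get $\pi^*K_S\sim K_X+4\,c_1(\det\mathcal F)^{-1}$-type relation $\equiv 0$, hence $K_S\equiv0$. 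Since $\pi$ has degree $p$, relative to $S_k\to S$ it identifies $X$ with $(S_k)^\nu_{\rm red}$, which shows that $S$ is geometrically non-reduced. Finally, $H^0(S,\mathcal O_S)=\ell$ follows from $k^{\delta}=\ell$. To make this explicit, I would write $X$ (or an affine cover of it) by equations. I would then present $S$ directly as a hypersurface or complete intersection over $\ell$, for instance in a weighted projective space, or as a degree-$5$ purely inseparable cover description, with defining equations involving $s_1,s_2$ and $5$th powers.

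\textbf{Step 3 (regularity).} We verify that $S$ is regular. Away from $\pi(P)$ this follows from the criterion of \cite[Proposition~2.4]{Ekedahl87F}, once $\mathcal F$ is shown to be a subbundle there. At $\pi(P)$ we need a direct computation: we exhibit local parameters and show that the local ring is regular, via the Jacobian criterion over $\ell$ using $d s_1, d s_2$ (i.e.\ the absolute Jacobian over $\mathbb F_5$).

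\textbf{Main obstacle.} The hardest step is finding $\delta$ (equivalently, explicit equations of $S$) so that the quotient is regular at the singular point $\pi(P)$ and the fibered structure is compatible. The numerics leave essentially no slack, so I would first search among derivations of the form $\partial_a-u\,\partial_b+h\,\partial_u$ on an affine chart adapted to the fibration, with $h$ a low-degree polynomial. I would impose $p$-closedness and the local regularity condition at $P$ as polynomial constraints and solve them (computer-assisted), then verify $H^0$ and regularity globally.
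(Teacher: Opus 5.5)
Your outline matches the paper's own ``construction idea''. You realize the Case~(1) configuration of Proposition~\ref{proposition-2/p} on a blow-up of $\mathbf F_3$ with $-K_{\widetilde X}\equiv 4F+2E_1+E_2$. You then contract the $(-3),(-2)$ chain to a $\frac15(1,2)$ point and descend along a rank-one foliation with $\mathfrak d\equiv F+E_1+E_2$.

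The formal deductions you list are fine, once such a $\delta$ with regular quotient exists:
\begin{enumerate}[(i)]
\item $K_S\equiv0$ follows from \eqref{eq:foliation-2}, because $\sigma^*(K_X+4\overline F)=K_{\widetilde X}+4F+2E_1+E_2$.
\item $H^0(S,\mathcal O_S)=k^\delta=\ell$.
\item Geometric non-reducedness follows from $[K(X):K(S)]=5<25=[k:\ell]$.
\end{enumerate}

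But the statement is a pure existence claim, and everything that makes it true is deferred. You never write down $\delta$. So none of the following is verified: $p$-closedness, $k^\delta=\ell$, the computation of $\mathfrak d$, the subbundle property away from $P$, and, above all, regularity of $X/\mathcal F$ at $\pi(P)$. ``Search among derivations and solve the constraints, computer-assisted'' is a research plan, not a proof. The analysis in Theorem~\ref{theorem-geo-int-K-trivial} only gives necessary conditions, so it cannot replace the construction. The whole content of this theorem is that the configuration is actually realized.

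Here is what fills the gap in the paper.
\begin{itemize}
\item \emph{The surface.} $\widetilde X$ is the smooth toric surface with rays $(1,0),(0,1),(-1,-2),(-2,-5),(-1,-3),(0,-1)$. Equivalently, it is $\mathbf F_3$ blown up at a $k$-point of a fiber, then at the intersection of the new exceptional curve with the fiber, so $F\equiv E_2+G_1+2G_2$. No imperfection is needed in this step; it enters only through $\delta$.
\item \emph{The derivation.} It is $\delta=\partial_{c_1}+t\,\partial_{c_2}+w\,\partial_t$, with $t$ the base coordinate and $w$ the fiber coordinate. Up to signs, this does lie in your ansatz family.
\item \emph{Saturation.} A chart-by-chart computation gives $\mathfrak d=E_1+2E_2+G_1+2G_2$. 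The point $\widetilde P=E_1\cap E_2$ is the only place where $\mathcal D$ fails to be a subbundle.
\item \emph{Regularity at $\pi(P)$.} This is what your Jacobian-criterion suggestion does not supply. With $U=1/w$ and $T=t$, the generator there is $\partial=T\partial_T+U(T\partial_{c_1}+T^2\partial_{c_2})$. It is of multiplicative type: $\partial^{[5]}=\partial$. This gives an eigenspace decomposition of $\mathcal O_{X,P}$, whose invariant part is $\mathcal O_{S,s_0}$, with projection $1-\partial^4$. A graded computation for the filtration by $\nu_{E_1}$, where $\operatorname{gr}\partial=T\partial_T$, then shows that $\mathcal O_{S,s_0}$ is regular with parameters $u=1/w$ and $r=t^5/w^2$.
\end{itemize}

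Without an explicit $\delta$ and a local argument of this kind at $\pi(P)$, your proposal does not establish the theorem.
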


\noindent{\it Construction idea:}
Assume that $S$ is such a surface, let $k:=\ell^{1/5}$, and let $\widetilde X$ be the minimal resolution of $X:= (S \times_\ell k)_{\rm red}^\nu$:
\[
  \begin{tikzcd}
    \widetilde X \rar{\sigma} &
    X \dar\ar[r,"\pi", "/\mathcal F"'] & S\dar \\
                                       &\Spec k \rar & \Spec\ell \rlap{.}
  \end{tikzcd}
\]
Since $[k:\ell]=5^2$ and $S \times_\ell k$ is non-reduced, we have $\deg(\pi) = 5$ and thus $S \cong X/\mathcal F$ for some rank-$1$ foliation $\mathcal F$ on $X$.
According to Proposition~\ref{proposition-2/p}, we can get a candidate of $\widetilde X$ by blowing up $\mathbf F_3=\mathbb P(\mathcal O\oplus\mathcal O(3))\to\mathbb P^1_k$ as follows:
\begin{equation}\label{eq-blow-up}
  \begin{tikzpicture}[baseline=(current bounding box.center),scale=.78,
      every node/.style={font=\small},line cap=round,line join=round]
    \begin{scope}[xshift=0cm]
      \draw (-1.5,.55)--(1.45,.55) node[right] {$E_1\ (-3)$};
      \draw (-.55,-.7)--(-.55,1.35) node[above right] {$E_2\ (-2)$};
      \draw[brown] (-.8,.2)--(.75,-.55)
        node[right,text=black] {$G_2\ (-1)$};
      \draw (.15,.05)--(.85,-1.15)
        node[right] {$G_1\ (-2)$};
      \fill[black] (-.55,.55) circle (1.35pt);
      \node[above left] at (-.55,.55) {$\widetilde P$};
      \node at (0,-1.65) {$\widetilde X$};
    \end{scope}

    \draw[->] (3,.1)--(4.2,.1) node[midway,above] {$\epsilon_2$};

    \begin{scope}[xshift=5.6cm]
      \draw (-1.1,.55)--(1.15,.55) node[right] {$(-3)$};
      \draw (-.55,-.65)--(-.55,1.2) node[above right] {$(-1)$};
      \draw[brown] (-.8,.15)--(.05,-1.05)
        node[right,text=black] {$(-1)$};
      \fill[brown] (-.55,-.2) circle (2.2pt);
    \end{scope}

    \draw[->] (8,.1)--(9.2,.1) node[midway,above] {$\epsilon_1$};

    \begin{scope}[xshift=10.8cm]
      \draw (-1,.55)--(1.15,.55) node[right] {$(-3)$};
      \draw (-.55,-.65)--(-.55,1.2) node[above right] {$F$};
      \fill[brown] (-.55,-.2) circle (2.2pt);
      \node at (0,-1.65) {$\mathbf F_3$};
    \end{scope}
  \end{tikzpicture}
\end{equation}
There exists a normal surface $\widetilde S$ fitting into the following commutative diagram \[
  \begin{tikzcd}
    \widetilde X \rar{\tilde{\pi}} \dar{\sigma} & \widetilde S\dar{\bar\sigma} \\
    X \rar{\pi} & S
  \end{tikzcd}
\]
such that $\bar\sigma$ is birational and $\tilde{\pi}$ is a universal homeomorphism.
Let $\overline E_i = \tilde{\pi}(E_i)$ for $i=1,2$.
One can show that \[
  K_{\widetilde S} = \bar\sigma^* K_S + 2 \overline E_1 + 3 \overline E_2
  \text{ \ and \ }
  \tilde{\pi}^* \overline E_i = E_i\ (i=1,2).
\]
Let $\mathcal D$ be the foliation on $\widetilde X$ corresponding to $\tilde{\pi}$, and write $\mathcal D\cong\mathcal O_{\widetilde X}(-\mathfrak d)$.
Since $K_{\widetilde X} + 4 F + 2 E_1 + E_2 \equiv 0$, and $K_{\widetilde X} + 4 \mathfrak d \equiv \tilde{\pi}^*K_{\widetilde S}$, 
we must have $\mathfrak d \equiv F + E_1 + E_2$.

{\it 
From the above analysis, we only need to find a foliation $\mathcal D$ on $\widetilde X$ such that 
\begin{enumerate}[\rm(1)]
  \item $-\det(\mathcal D) \equiv F+E_1+E_2$, 
  \item and for the induced foliation $\mathcal F$ on $X$, $X/\mathcal F$ is regular. 
\end{enumerate}
\medskip

An AI system found such a foliation $\mathcal D$ satisfying all the above conditions.
}

\begin{proof}[Proof of Theorem~\ref{theorem-exam-K-trivial-5}] We break the argument into the following steps.
\bigskip

  \emph{Step 1: The toric model $\widetilde X$.}
  \medskip
  
  Let $\widetilde X$ be the toric surface over $k$ whose fan has the six rays, in cyclic order,
  \[
    (1,0),\quad(0,1),\quad(-1,-2),\quad(-2,-5),\quad(-1,-3),\quad(0,-1),
  \]
  with corresponding toric divisors $F_0,C_\infty,G_1,G_2,E_2,E_1$, see Figure~\ref{figure-exam-5-fan}.
  \begin{figure}[ht]
  \centering
  \begin{tikzpicture}[scale=1, >=stealth]
    \begin{scope}
      \fill[brown!10] (0,0) -- (200:3.1) arc[start angle=200, end angle=360, radius=3.1] -- cycle;
      \node[brown, font=\scriptsize, align=center] at (-45:4.3)
            {$\theta=\langle u_{G_2},u_{F_0}\rangle$\\[1pt]type $\tfrac15(1,2)$\\[1pt]{\scriptsize(merges $V_5{+}V_3{+}V_2$)}};

      \draw[help lines, gray!35] (-3.4,0) -- (3.4,0);
      \draw[help lines, gray!35] (0,-3.5) -- (0,3.1);
      \fill (0,0) circle (1.3pt);

      \node[font=\small\bfseries] at (45:1.35) {$V_1$};
      \node[font=\scriptsize] at (45:1.85) {\scriptsize$(t,w)$};
      \node[font=\small\bfseries] at (130:1.45) {$V_4$};
      \node[font=\scriptsize] at (130:1.95) {\scriptsize$(t',\eta)$};
      \node[font=\small\bfseries] at (185:1.45) {$V_6$};
      \node[font=\scriptsize] at (185:1.95) {\scriptsize$(\xi,\tau)$};
      \node[font=\small\bfseries] at (217.5:1.45) {$V_5$};
      \node[font=\scriptsize] at (217.5:1.95) {\scriptsize$(\tilde w,\rho)$};
      \node[font=\small\bfseries, brown] at (252.5:1.45) {$V_3$};
      \node[font=\scriptsize, brown] at (252.5:1.95) {\scriptsize$(t',\tilde v)$};
      \node[font=\small\bfseries] at (315:1.45) {$V_2$};
      \node[font=\scriptsize] at (315:1.95) {\scriptsize$(t,v)$};

      \draw[thick,->] (0,0) -- (0:3.0)
        node[right, align=center, font=\small]
        {$F_0$\\[-1pt]{\scriptsize$(1,0)$}, $0$};
      \draw[thick,->] (0,0) -- (0,2.8)
        node[right, align=center, font=\small]
        {$C_\infty$\\[-1pt]{\scriptsize$(0,1)$}, $2$};
      \draw[thick,->] (0,0) -- (170:2.9)
        node[above left, align=center, font=\small]
        {$G_1$\\[-1pt]{\scriptsize$(-1,-2)$}, $-2$};
      \draw[thick,->] (0,0) -- (200:3.1)
        node[left, align=center, font=\small]
        {$G_2$\\[-1pt]{\scriptsize$(-2,-5)$}, $-1$};
      \draw[thick,brown,->] (0,0) -- (235:3.0)
        node[below left, align=center, font=\small]
        {\color{brown}$E_2$\\[-1pt]{\scriptsize$(-1,-3)$}, $-2$};
      \draw[thick,brown,->] (0,0) -- (270:3.2)
        node[below, align=center, font=\small]
        {\color{brown}$E_1$\\[-1pt]{\scriptsize$(0,-1)$}, $-3$};

      \fill[brown] (252.5:0.55) circle (1.6pt);
      \node[font=\scriptsize, brown] at (252.5:0.95) {$\widetilde P$};

      \node[font=\scriptsize, brown, align=left, anchor=west]
            at (-4.5,2.25)
            {highlighted rays deleted by $\sigma$;\\[-1pt]
             $\widetilde P\in V_3$};
    \end{scope}
  \end{tikzpicture}
  \caption{The toric surface $\widetilde X$}
  \label{figure-exam-5-fan}
  \end{figure}
  Let $N\simeq\mathbb Z^2$ be the lattice containing these rays, let
  $M:=\operatorname{Hom}(N,\mathbb Z)$ be its dual lattice, and denote the
  dual basis of $M$ by $e_1^\vee,e_2^\vee$.  Set
  \[
    t:=\chi^{e_1^\vee},\qquad w:=\chi^{e_2^\vee};
  \]
  these are the two standard characters of the dense torus.
  Each consecutive pair of primitive generators has determinant $1$, so the fan is smooth.
  For a torus-invariant prime divisor $D$, let $u_{\mathrm{prev}}$ and $u_{\mathrm{next}}$ be the primitive generators adjacent to $u_D$.
  The toric relation \[
    u_{\mathrm{prev}}+u_{\mathrm{next}}=-(D)^2_k u_D
  \]
  gives \[
  (E_1)^2_k=-3,\qquad (E_2)^2_k=-2,\qquad
  (G_1)^2_k=-2,\qquad (G_2)^2_k=-1.
  \]
  Consecutive toric divisors meet once and nonconsecutive ones are disjoint; hence \[
    E_1\cdot_kE_2=E_2\cdot_kG_2=G_2\cdot_kG_1=1,
  \]
  while the remaining pairs among $E_1,E_2,G_1,G_2$ are disjoint.
  Set $\widetilde P:=E_1\cap E_2$.

  The rational function $t$ extends to a toric morphism $f\colon\widetilde X\to B\cong\mathbb P^1_k$.
  Choose $b\in B(k)$ in the dense torus and put $F_b:=f^*(b)$.
  Then $E_1$ is a section, $E_2$ is vertical, $F_b\cdot_kE_1=1$, and
  \begin{equation}\label{equation-exam-5-fiber}
    F:=F_b\equiv F_0\equiv E_2+G_1+2G_2,
  \end{equation}
  where the last equivalence follows from $\operatorname{div}(t)=F_0-E_2-G_1-2G_2$.
  Since the toric boundary is anticanonical, we have \[
    -K_{\widetilde X}=F_0+C_\infty+G_1+G_2+E_2+E_1.
  \]
  The relation
  $\operatorname{div}(w)=C_\infty-E_1-2G_1-5G_2-3E_2$ and
  \eqref{equation-exam-5-fiber} then give
  \begin{equation}\label{equation-exam-5-can}
    -K_{\widetilde X}\equiv
    F_0+2E_1+3(E_2+G_1+2G_2)+E_2
    \equiv 4F+2E_1+E_2.
  \end{equation}
  \medskip
  
  \emph{Step 2: The foliation $\mathcal D$.}
  
    \medskip
  Let $c_i = s_i^{1/p}$ ($i=1,2$) so that $k = \ell^{1/p} = \ell(c_1,c_2)$.
  Write $\partial_{c_1},\partial_{c_2}$ for the basis of
  $\operatorname{Der}_\ell(k)$ dual to the $p$-basis $c_1,c_2$; their common
  kernel is $\ell$.
  Set $K:=K(\widetilde X) = k(t,w)$ and let \[
    \delta=\partial_{c_1}+t\,\partial_{c_2}+w\,\partial_t \in \operatorname{Der}_\ell(K).
  \]
  Equivalently, $\delta$ is given on the $p$-basis $c_1,c_2,t,w$ of $K$ over $K^5$ by
  \[
    \delta(c_1)=1,\qquad \delta(c_2)=t,\qquad \delta(t)=w,\qquad \delta(w)=0.
  \]
  Then $\delta^{[5]}=0$, as one checks on the $p$-basis, so $K\delta$ is $p$-closed and $L:=\ker\delta$ has index $5$ in $K$.
  For $\lambda\in k$ one has
  $\delta(\lambda)=\partial_{c_1}(\lambda)+t\,\partial_{c_2}(\lambda)$.
  Using the linear independence of $1,t$ over $k$, together with
  $\delta(c_1)=1$ and $[K:L]=5$, we obtain
  \begin{equation}\label{equation-exam-5-Lk}
    L\cap k=\ell,\qquad Lk=K .
  \end{equation}
  Let $\mathcal D:=T_{\widetilde X/\ell}\cap K\delta$ be the saturation of $\delta$.
  Then $\mathcal D$ is a rank-$1$ foliation on a regular surface $\widetilde X$ and thus locally free, so
  \[
    \mathcal D \cong\mathcal O_{\widetilde X}(-\mathfrak d)
  \]
  for some divisor $\mathfrak d$.
  Put $t'=1/t$, $v=1/w$, $\tilde w=w/t^3$, $\tilde v=1/\tilde w$, $\eta=w/t^2$, $\xi=t^2/w$ and $\rho=t^5/w^2=1/\tau$.
  Clearing denominators and cancelling common factors in the six charts below gives the local generator $\partial$ of $\mathcal D$:
  \begin{center}
  \renewcommand{\arraystretch}{1.2}
  \begin{tabular}{c|c|c|cccc}
    chart & $(y_1,y_2)$ & $\partial$ & $\partial(c_1)$ & $\partial(c_2)$ & $\partial(y_1)$ & $\partial(y_2)$ \\ \hline
    $V_1$ & $(t,w)$ & $\delta$ & $1$ & $t$ & $w$ & $0$ \\
    $V_2$ & $(t,v)$ & $v\delta$ & $v$ & $tv$ & $1$ & $0$ \\
    $V_3$ & $(t',\tilde v)$ & $t'^2\tilde v\,\delta$ & $t'^2\tilde v$ & $t'\tilde v$ & $-t'$ & $3\tilde v$ \\
    $V_4$ & $(t',\eta)$ & $t'\delta$ & $t'$ & $1$ & $-t'\eta$ & $3\eta^2$ \\
    $V_5$ & $(\tilde w,\rho)$ & $\tilde w^2\rho^2\delta$ & $\tilde w^2\rho^2$ & $\rho$ & $2$ & $0$ \\
    $V_6$ & $(\xi,\tau)$ & $\xi^2\tau\,\delta$ & $\xi^2\tau$ & $1$ & $2$ & $0$
  \end{tabular}
  \end{center}
  For example, in the chart $V_5$, we have $t = 1/(\tilde{w}^2\rho)$ and $w = 1/(\tilde{w}^5\rho^3)$, so \[
    \delta=\partial_{c_1}+\frac1{\tilde w^2\rho}\,\partial_{c_2}
    -\frac3{\tilde w^2\rho^2}\,\partial_{\tilde w} +\frac5{\tilde w^3\rho}\,\partial_\rho.
  \]
  Clearing denominators gives the regular (but not saturated) section \(\tilde w^3\rho^2\delta\), with coefficients
  \[
  \bigl(\tilde w^3\rho^2,\ \tilde w\rho,\ -3\tilde w,\ 5\rho\bigr).
  \]
  Then, since $p=5$, we have $5\rho=0$, and we can cancel the common factor $\tilde{w}$ to obtain $\tilde w^2\rho^2\delta$.
  One can work out the other charts similarly.

  Here $E_1=\{v=0\}=\{\tilde v=0\}$; $E_2=\{t'=0\}$ in $V_3$ and $\{\rho=0\}$ in $V_5$; $G_1=\{t'=0\}$ in $V_4$ and $\{\tau=0\}$ in $V_6$; $G_2=\{\tilde w=0\}$ in $V_5$ and $\{\xi=0\}$ in $V_6$; and $F_0,C_\infty\subseteq V_1$.

  Since $\delta(c_1)=1$, evaluation at $c_1$ defines a nonzero section of
  $\mathcal D^\vee\cong\mathcal O_{\widetilde X}(\mathfrak d)$.  Relative to
  each local generator $\partial$ in the table, this section is represented
  by $\partial(c_1)$.  Its vanishing orders along
  $E_1,E_2,G_1,G_2$ are respectively $1,2,1,2$, and it has no other zeros.
  Hence
  \begin{equation}\label{equation-exam-5-d}
    \mathfrak d=E_1+2E_2+G_1+2G_2.
  \end{equation}
  Thus by \eqref{equation-exam-5-fiber}, \[
    \mathfrak d \equiv E_1+E_2+F,\qquad
    \mathfrak d\cdot_kE_1=\mathfrak d\cdot_kE_2=-1,\quad \mathfrak d\cdot_kF=1 .
  \]
  Moreover, $\partial(v)=0$ and $\partial(\tilde v)=3\tilde v$ show that $E_1$ is $\mathcal D$-invariant, while $\partial(t')=-t'$ and $\partial(\rho)=0$ show that $E_2$ is $\mathcal D$-invariant;
   and in each chart except $V_3$ some value of $\partial$ is a unit, while in $V_3$ the four values generate the unit ideal away from $\widetilde P$.
  So $\widetilde P$ is the only point at which $\mathcal D\subseteq T_{\widetilde X/\ell}$ is not a subbundle.

  \medskip
  \emph{Step 3: Contract and descend.}
    \medskip
    
  Deleting the rays $u_{E_1},u_{E_2}$ defines a projective toric contraction $\sigma\colon\widetilde X\to X$; set $P:=\sigma(E_1\cup E_2)$.
  The germ $(X,P)$ is the affine toric surface of the cone $\langle(-2,-5),(1,0)\rangle$, i.e., the cyclic quotient singularity of type $\frac15(1,2)$.
  Thus $X$ is normal projective, regular away from $P$, and $\sigma$ is its minimal resolution.
  Since $\widetilde X\to X$ is birational, $\mathcal D$ induces a foliation $\mathcal F$ on $X$; set
  \[
    S:=X/\mathcal F,\qquad \pi\colon X\to S,\qquad s_0:=\pi(P).
  \]
  Then $S$ is a normal projective surface.

  \medskip
  \emph{Step 4: Regularity of $S$.}
  
    \medskip
  First consider $x\in\widetilde X\setminus(E_1\cup E_2)$ and let $\partial$ generate $\mathcal D$ at $x$.
  By the chart computation in Step~2 there is $g$ with $\partial(g)\in\mathcal O_{\widetilde X,x}^\times$; replacing $\partial$ by $\partial(g)^{-1}\partial$, which spans the same line, we may assume $\partial(g)=1$, and then $\partial^{[5]}=c\partial$ with $c=\partial^5(g)=\partial^4(1)=0$.
  Hence $A:=\ker\partial$ satisfies $\mathcal O_{\widetilde X,x}=\bigoplus_{i=0}^{4}Ag^i$, so $A\to\mathcal O_{\widetilde X,x}$ is a faithfully flat local homomorphism and $A$ is regular by \cite[Theorem~23.7]{MatsumuraCRT}.
  Thus $S\setminus\{s_0\}$ is regular.
  We next show that $S$ is regular at $s_0$.

  On $V_2\subset\widetilde X$, $(t,v)$ are coordinates and $E_1=\{v=0\}$.
  Set $U:=v=1/w$ and $T:=t$.  Via the common dense torus of $\widetilde X$ and $X$, these are rational functions on both surfaces, but not local parameters at $P\in X$.
  On $\widetilde X$ one has $\nu_{E_1}(U^aT^b)=a$ and $\nu_{E_2}(U^aT^b)=3a-b$.
  The monomial $U^aT^b=t^bw^{-a}$ corresponds to the lattice point $(b,-a)\in M$, hence is regular on the affine chart of $X$ for $\langle(-2,-5),(1,0)\rangle$ if and only if $b\ge0$ and $5a-2b\ge0$.
  Thus, with $\mathcal C:=\{(a,b)\in\mathbb Z^2:0\le2b\le5a\}$ and $A:=k[U^aT^b:(a,b)\in\mathcal C]$, writing $\mathfrak m\subset A$ for the maximal ideal at $P$, \[
    R:=\mathcal O_{X,P}=A_{\mathfrak m},
    \qquad
    \partial=t'^2\tilde v\,\delta=T\partial_T+U\bigl(T\partial_{c_1}+T^2\partial_{c_2}\bigr),
  \]
  the latter being the generator of $\mathcal D$ at $\widetilde P$ from Step~2 rewritten in $U,T$.
  Thus
  \begin{equation}\label{equation-exam-5-partial}
    \partial(\lambda U^aT^b)=\lambda b\,U^aT^b+(\partial_{c_1}\lambda)U^{a+1}T^{b+1}+(\partial_{c_2}\lambda)U^{a+1}T^{b+2}
    \qquad(\lambda\in k),
  \end{equation}
  so $\partial(R)\subseteq R$ and $\partial^{[5]}=\partial$ (check on $U,T,c_1,c_2$, using $2^4\equiv1$).
  Since \(X^5-X\) is separable, \(\partial^{[5]}=\partial\) yields an eigenspace decomposition \(R=\bigoplus_{i\in\mathbb F_5}R_i\) with \(R_i=\{x\in R:\partial(x)=ix\}\); in particular \(R_0=\ker(\partial|_R)=R\cap L=\mathcal O_{S,s_0}\), and the projection \(R\to R_0\) is \(1-\partial^4\).
  Filter $R$ by $F_n=\{x:\nu_{E_1}(x)\ge n\}$, so that $\operatorname{gr}R=\bigoplus_n F_n/F_{n+1}$.
  The formula~\eqref{equation-exam-5-partial} shows $\partial(F_n)\subseteq F_n$ and that $\partial(x)-T\partial_T(x)$ raises $\nu_{E_1}$; write $\operatorname{gr}\partial$ for the induced derivation of $\operatorname{gr}R$.
  Then $\operatorname{gr}\partial=T\partial_T$, with $\operatorname{gr}\partial\,(U^aT^b)=b\,U^aT^b$ in $\operatorname{gr} R$.
  Hence every element of $\operatorname{gr}R_0$ is a $k$-linear combination of the initial forms of monomials $U^aT^b$ with $5\mid b$.
  For the reverse inclusion, if $(a,b)\in\mathcal C$, $5\mid b$, and $\lambda\in k$, then $(1-\partial^4)(\lambda U^aT^b)\in R_0$ and
  \[
    (1-\partial^4)(\lambda U^aT^b)=\lambda U^aT^b+(\text{higher }\nu_{E_1}).
  \]
  Thus every such monomial occurs as an initial form in $\operatorname{gr}R_0$.
  In particular the residue field of $R_0$ is $k$.
  The semigroup $\mathcal C\cap\{5\mid b\}$ is freely generated by $(1,0)$ and $(2,5)$, since $(a,5m)=(a-2m)(1,0)+m(2,5)$.
  Set $u:=U$, $r:=U^2T^5$, and let $\mathfrak n$ be the maximal ideal of $R_0$.
  Both $u$ and $r$ lie in $R_0$.  For $0\ne x\in R_0$ with $d=\nu_{E_1}(x)$, write $\operatorname{in}(x)$ for the image of $x$ in $F_d/F_{d+1}$.
  The preceding computation gives
  \[
    \operatorname{gr}R_0=k[\operatorname{in}(u),\operatorname{in}(r)].
  \]
  For the filtration induced by $F_\bullet$ on $\mathfrak n/\mathfrak n^2$, this gives
  \[
    \operatorname{gr}(\mathfrak n/\mathfrak n^2)
    =k\operatorname{in}(u)\oplus k\operatorname{in}(r),
  \]
  where the two generators have degrees $1$ and $2$; all other nonconstant monomials lie in $\mathfrak n^2$.
  Thus $u,r$ generate $\mathfrak n/\mathfrak n^2$, so Nakayama's lemma gives $\mathfrak n=(u,r)$.
  Since $\dim R_0=2$, we obtain
  \begin{equation}\label{equation-exam-5-germ}
    R_0=\mathcal O_{S,s_0}\ \text{is regular, with regular parameters}\quad
    u=\frac1w,\qquad r=\frac{t^5}{w^2} .
  \end{equation}
  Finally, $U^i(U^2T^5)^j=U^{i+2j}T^{5j}$ has $\nu_{E_1}=i+2j$ and $\nu_{E_2}=3i+j$, and distinct $(i,j)$ with equal $i+2j$ have distinct $3i+j$; so no leading terms cancel and $\nu_{E_1},\nu_{E_2}$ are the monomial valuations of weights $(1,2)$ and $(3,1)$ in $u,r$, of discrepancies $1+2-1=2$ and $3+1-1=3$.

  \medskip
  \emph{Step 5: The conclusion.}
  
    \medskip
  Let $\tilde\pi\colon\widetilde X\to\widetilde S:=\widetilde X/\mathcal D$ and $\overline E_i:=\tilde\pi(E_i)$.
  Since $E_i$ is $\mathcal D$-invariant, we have $\tilde\pi^*\overline E_i=E_i$ by \cite[Proposition~1]{Rudakov-Shafarevich-1976}.
  By \eqref{eq:foliation-2}, \eqref{equation-exam-5-can} and \eqref{equation-exam-5-d},
  \[
    \tilde\pi^*K_{\widetilde S}\equiv K_{\widetilde X}+4\mathfrak d
    \equiv-(4F+2E_1+E_2)+4(F+E_1+E_2)=2E_1+3E_2
    =\tilde\pi^*(2\overline E_1+3\overline E_2),
  \]
  so $K_{\widetilde S}\equiv2\overline E_1+3\overline E_2$ since $\tilde{\pi}^*$ is injective on numerical classes.
  Since the discrepancies of $\overline E_1,\overline E_2$ over the regular point $s_0$ are $2$ and $3$ (Step~4), the contraction $\widetilde S\to S$ of $\overline E_1+\overline E_2$ satisfies $K_{\widetilde S}=\bar\sigma^*K_S+2\overline E_1+3\overline E_2$, whence $K_S\equiv0$.

  \smallskip
  Since $\mathcal O_S\subseteq\mathcal O_X$, \eqref{equation-exam-5-Lk} gives $H^0(S,\mathcal O_S)=k\cap L=\ell$.
  The same relation yields $[K:L]=[Lk:L]=5<[k:\ell]$.
  By Mac~Lane's criterion, a finitely generated extension $F/\ell$ is separable if and only if $[F\cdot\ell^{1/p}:F]=[\ell^{1/p}:\ell]$; thus $L/\ell$ is inseparable, and therefore $S$ is geometrically non-reduced over $\ell$.
\end{proof}

\subsection{Geometrically non-normal $K$-trivial surfaces in characteristic $\le7$}\label{subsection-exam-non-normal-le7}%
\begin{example}\label{example-57}
  Let $k_0$ be an algebraically closed field of characteristic $p\le7$.
  Let $G=\mu_p$ be the infinitesimal group scheme over $k_0$.
  Let $C$ be a smooth curve over $k_0$ with a free $G$-action (e.g., $C$ is an ordinary elliptic curve or $C = \mathbb A^1\setminus \{0\}$).
  By \cite[Theorem~7.3 (2), Examples~10.2, 10.3, 10.7 and 10.8]{Matsumoto-2023-mu_p-alpha_p}, there exists a $G$-quotient $\rho\colon T\to S$ such that 
  \begin{itemize}
    \item $S$ is an RDP K3 surface, i.e., $S$ has at most rational double point singularities and its minimal resolution is a K3 surface; 
    \item $T$ is a proper integral non-normal Gorenstein surface satisfying $\omega_T \cong \mathcal{O}_T$, $\operatorname{Sing}(T) \cap \rho^{-1}(\operatorname{Sing}(S)) = \varnothing$, and
      $\operatorname{Supp}\operatorname{Fix}(G) = \rho^{-1}(\operatorname{Sing}(S))$.
  \end{itemize}
  Let $G$ act diagonally on $T \times C$.
  Since $G$ acts freely on $C$, it acts freely on $T\times C$ and thus the quotient morphism \[
    q\colon T \times C \longrightarrow W := (T \times C)/G
  \]
  is a finite faithfully flat $G$-torsor.
  The projections induce morphisms
  \[
    \begin{tikzcd}
      W = (T \times C)/G \rar{f}\dar{g} & C/G = C' \\
      T/G = S\,. &
    \end{tikzcd}
  \]
  Set $U := S^{\mathrm{reg}}$ and $T_U := \rho^{-1}(U)$.
  Since $T_U\to U$ is a $G$-torsor, the associated bundle
  \[
    g^{-1}(U) = (T_U \times C)/G \longrightarrow U
  \]
  is fppf locally the smooth projection $T_U \times C \to T_U$; hence it is smooth and $g^{-1}(U)$ is regular.
  For $w\in g^{-1}(S\setminus U)$, choose $(t,c)\in T\times C$ above $w$.
  Since $\rho(t)\in\operatorname{Sing}(S)$, the stated property of $T$ shows that $T$ is regular at $t$, so $T\times C$ is regular at $(t,c)$.
  Regularity descends along $q$ by \cite[\href{https://stacks.math.columbia.edu/tag/033D}{Tag~033D}]{stacks-project}, so $W$ is regular at $w$ and hence everywhere.

  Set $K:=K(C’)$ and $L:=K(C)$, and let $X:=W_K$ be the generic fiber of $f$.
  The morphism $f$ is flat and projective. Since $W$ is regular, so is $X$.
  By construction, $X_L\cong T_L$, so $X$ is geometrically integral and geometrically non-normal.
  Moreover, $H^0(X,\mathcal O_X)=K$.
  Finally, since \[
    \omega_X|_{X_L}\cong\omega_{X_L}\cong(\omega_T)_L\cong\mathcal O_{X_L}
  \]
  and $\operatorname{Pic}(X)\to\operatorname{Pic}(X_L)$ is injective by \cite[\href{https://stacks.math.columbia.edu/tag/0CC5}{Tag~0CC5}]{stacks-project}, we have $\omega_X\cong\mathcal O_X$.
  Thus $X$ is a projective regular geometrically integral, geometrically non-normal $K$-trivial surface.
\end{example}


\begin{thebibliography}{CWZ26a}

\bibitem[Art66]{Artin66Rat}
M.~Artin.
\newblock On isolated rational singularities of surfaces.
\newblock {\em Am. J. Math.}, 88:129--136, 1966.

\bibitem[BFSZ24]{BFSZ2404}
Fabio Bernasconi, Andrea Fanelli, Julia Schneider, and Susanna Zimmermann.
\newblock Explicit Sarkisov program for regular surfaces over arbitrary fields and applications, 2024.
\newblock arXiv:2404.03281v3.

\bibitem[BM24]{Bernasconi-Martin-2024-Bounding}
Fabio Bernasconi and Gebhard Martin.
\newblock Bounding geometrically integral del {Pezzo} surfaces.
\newblock {\em Forum Math. Sigma}, 12:e81, 1--24, 2024.

\bibitem[BT22]{Bernasconi-Tanaka-22}
Fabio Bernasconi and Hiromu Tanaka.
\newblock On del {Pezzo} fibrations in positive characteristic.
\newblock {\em J. Inst. Math. Jussieu}, 21(1):197--239, 2022.

\bibitem[BT24]{BT2408}
Fabio Bernasconi and Hiromu Tanaka.
\newblock Geometry and arithmetic of geometrically integral regular del Pezzo surfaces, 2024.
\newblock To appear in {\em Algebra \& Number Theory}.
\newblock arXiv:2408.11378v3.

\bibitem[CWZ26a]{CWZ26a}
Jingshan Chen, Chongning Wang, and Lei Zhang.
\newblock On canonical bundle formula for fibrations of curves with arithmetic genus one.
\newblock Forum of Mathematics, Sigma, 2026.

\bibitem[CWZ26b]{CWZ26b}
Jingshan Chen, Chongning Wang, and Lei Zhang.
\newblock Effective nonvanishing of {$K$}-trivial regular surfaces over imperfect fields.
\newblock Preprint, 2026.

\bibitem[DW22]{Das-Waldron-2022}
Omprokash Das and Joe Waldron.
\newblock On the log minimal model program for threefolds over imperfect fields of characteristic {$p>5$}.
\newblock {\em J. Lond. Math. Soc. (2)}, 106(4):3895--3937, 2022.

\bibitem[Eke87]{Ekedahl87F}
Torsten Ekedahl.
\newblock Foliations and inseparable morphisms.
\newblock In {\em Algebraic geometry, {B}owdoin, 1985 ({B}runswick, {M}aine, 1985)}, volume~46 of {\em Proc. Sympos. Pure Math.}, pages 139--149. Amer. Math. Soc., Providence, RI, 1987.

\bibitem[EP23]{EP23}
Sho Ejiri and Zsolt Patakfalvi.
\newblock The {Demailly--Peternell--Schneider} conjecture is true in positive characteristic.
\newblock 2023.
\newblock arXiv:2305.02157.

\bibitem[FS20]{Fanelli-Schroer-2020}
Andrea Fanelli and Stefan Schr\"{o}er.
\newblock Del {P}ezzo surfaces and {M}ori fiber spaces in positive characteristic.
\newblock {\em Trans. Amer. Math. Soc.}, 373(3):1775--1843, 2020.

\bibitem[Ful98]{fulton_intersection_theory}
William Fulton.
\newblock {\em Intersection theory}, volume~2 of {\em Ergebnisse der Mathematik und ihrer Grenzgebiete. 3. Folge. A Series of Modern Surveys in Mathematics [Results in Mathematics and Related Areas. 3rd Series. A Series of Modern Surveys in Mathematics]}.
\newblock Springer-Verlag, Berlin, second edition, 1998.

\bibitem[Har77]{Hartshorne-AG}
Robin Hartshorne.
\newblock {\em Algebraic geometry}, volume No. 52 of {\em Graduate Texts in Mathematics}.
\newblock Springer-Verlag, New York-Heidelberg, 1977.

\bibitem[HW02]{Hara-Watanabe-2002}
Nobuo Hara and Kei-Ichi Watanabe.
\newblock F-regular and {F}-pure rings vs. log terminal and log canonical singularities.
\newblock {\em J. Algebraic Geom.}, 11(2):363--392, 2002.

\bibitem[JW21]{ji_structure_2021}
Lena Ji and Joe Waldron.
\newblock Structure of geometrically non-reduced varieties.
\newblock {\em Trans. Amer. Math. Soc.}, 374(12):8333--8363, 2021.

\bibitem[Kol13]{Kollar-Sing}
J{\'a}nos Koll{\'a}r.
\newblock {\em Singularities of the minimal model program. {With} the collaboration of {S{\'a}ndor} {Kov{\'a}cs}}, volume 200 of {\em Camb. Tracts Math.}
\newblock Cambridge: Cambridge University Press, 2013.

\bibitem[Lip69]{Lipman69}
Joseph Lipman.
\newblock Rational singularities, with applications to algebraic surfaces and unique factorization.
\newblock {\em Inst. Hautes \'{E}tudes Sci. Publ. Math.}, (36):195--279, 1969.

\bibitem[Liu02]{Liu-AGAC}
Qing Liu.
\newblock {\em Algebraic geometry and arithmetic curves}, volume~6 of {\em Oxford Graduate Texts in Mathematics}.
\newblock Oxford University Press, Oxford, 2002.
\newblock Translated from the French by Reinie Ern\'{e}, Oxford Science Publications.

\bibitem[Liu25]{Liu2025}
Qing Liu.
\newblock Desingularization of double covers of regular surfaces, 2025.
\newblock arXiv:2504.16808v3.

\bibitem[Mat89]{MatsumuraCRT}
Hideyuki Matsumura.
\newblock {\em Commutative ring theory}, volume~8 of {\em Cambridge Studies in Advanced Mathematics}.
\newblock Cambridge University Press, Cambridge, second edition, 1989.
\newblock Translated from the Japanese by M. Reid.

\bibitem[Mat23]{Matsumoto-2023-mu_p-alpha_p}
Yuya Matsumoto.
\newblock {$\mu_p$}- and {$\alpha_p$}-actions on {K}3 surfaces in characteristic {$p$}.
\newblock {\em J. Algebraic Geom.}, 32(2):271--322, 2023.

\bibitem[MP97]{MP97}
Yoichi Miyaoka and Thomas Peternell.
\newblock {\em Geometry of higher dimensional algebraic varieties}, volume~26 of {\em DMV Semin.}
\newblock Birkh\"auser, Basel, 1997.

\bibitem[MS03]{Mori-Saito-03Wild}
Shigefumi Mori and Natsuo Saito.
\newblock Fano threefolds with wild conic bundle structures.
\newblock {\em Proc. Japan Acad., Ser. A}, 79(6):111--114, 2003.

\bibitem[PW22]{Patakfalvi-Waldron-2022}
Zsolt Patakfalvi and Joe Waldron.
\newblock Singularities of general fibers and the {LMMP}.
\newblock {\em Amer. J. Math.}, 144(2):505--540, 2022.

\bibitem[Que71]{Queen-71-I}
Clifford~S. Queen.
\newblock Non-conservative function fields of genus one. {I}.
\newblock {\em Arch. Math. (Basel)}, 22:612--623, 1971.

\bibitem[RS78]{Rudakov-Shafarevich-1976}
A.~N. Rudakov and I.~R. Shafarevich.
\newblock Inseparable morphisms of algebraic surfaces.
\newblock {\em Math. USSR, Izv.}, 10:1205--1237, 1978.

\bibitem[Sch10]{schroer_fibration_2010}
Stefan Schr\"oer.
\newblock On fibrations whose geometric fibers are nonreduced.
\newblock {\em Nagoya Math. J.}, 200:35--57, 2010.

\bibitem[Sch22]{Schroer2211}
Stefan Schröer.
\newblock The structure of regular genus-one curves over imperfect fields, 2022.
\newblock arXiv:2211.04073.

\bibitem[{Sta}25]{stacks-project}
The {Stacks {P}roject authors}.
\newblock The {S}tacks {P}roject.
\newblock \url{https://stacks.math.columbia.edu}, Accessed in 2025.

\bibitem[Tan18]{Tanaka-2018-MMP-excelent-surface}
Hiromu Tanaka.
\newblock Minimal model program for excellent surfaces.
\newblock {\em Ann. Inst. Fourier}, 68(1):345--376, 2018.

\bibitem[Tan20]{tanaka_abundance_2020}
Hiromu Tanaka.
\newblock Abundance theorem for surfaces over imperfect fields.
\newblock {\em Mathematische Zeitschrift}, 295(1):595--622, 2020.

\bibitem[Tan21]{tanaka_invariants_2021}
Hiromu Tanaka.
\newblock Invariants of algebraic varieties over imperfect fields.
\newblock {\em Tohoku Math. J. (2)}, 73(4):471--538, 2021.

\bibitem[Tan24]{Tanaka-24-Bound}
Hiromu Tanaka.
\newblock Boundedness of regular del {P}ezzo surfaces over imperfect fields.
\newblock {\em Manuscripta Math.}, 174(1-2):355--379, 2024.

\bibitem[Ume81]{Umezu-1981}
Yumiko Umezu.
\newblock On normal projective surfaces with trivial dualizing sheaf.
\newblock {\em Tokyo J. Math.}, 4:343--354, 1981.

\end{thebibliography}

\end{document}